\documentclass[10pt, amsmath, amssymb]{amsart}

\usepackage{amsmath}
\usepackage{amsthm}
\usepackage{amssymb}
\usepackage{graphics}
\usepackage{bm}

\usepackage{bbold}

\newcommand{\be}{\begin{equation}}
\newcommand{\ee}{\end{equation}}
\newcommand{\ba}{\begin{eqnarray}}
\newcommand{\ea}{\end{eqnarray}}
\newcommand{\bi}{\begin{itemize}}
\newcommand{\ei}{\end{itemize}}
\newcommand{\bn}{\begin{enumerate}}
\newcommand{\en}{\end{enumerate}}
\newcommand{\bp}{\begin{proof}}
\newcommand{\ep}{\end{proof}}

\newcommand{\mr}{\ensuremath{\mathrm}}

\newcommand{\mc}{\ensuremath{\mathcal}}
\newcommand{\mf}{\ensuremath{\mathfrak}}
\newcommand{\ov}{\ensuremath{\overline}}
\newcommand{\sm}{\ensuremath{\setminus}}

\newcommand{\Om}{\ensuremath{\Omega}}

\newcommand{\La}{\ensuremath{\Lambda }}
\newcommand{\la}{\ensuremath{\lambda }}

\renewcommand{\bm}{\ensuremath{\mathbb }}
\newcommand{\tr}{\ensuremath{\mathrm{tr} }}

\newcommand{\ip}[2]{\ensuremath{\langle {#1} , {#2} \rangle}}

\newcommand{\dom}[1]{\ensuremath{\mathrm{Dom} ({#1}) }}
\renewcommand{\dim}[1]{\ensuremath{\mathrm{dim} \left( {#1} \right) }}

\newcommand{\ran}[1]{\ensuremath{\mathrm{Ran} ({#1}) }}
\renewcommand{\ker}[1]{\ensuremath{\mathrm{Ker} ({#1}) }}

\newcommand{\im}[1]{\ensuremath{\mathrm{Im} \left( {#1} \right) }}
\newcommand{\re}[1]{\ensuremath{\mathrm{Re} \left( {#1} \right) }}


\numberwithin{equation}{section}

\newtheorem{thm}[subsubsection]{Theorem}

\newtheorem{lemming}[subsubsection]{Lemma}
\newtheorem{prop}[subsubsection]{Proposition}

\addtolength{\textwidth}{.20\textwidth}

\setlength{\topmargin}{0mm} \baselineskip1in \advance\hoffset by
-.9truein

\advance\voffset by -.1truein \addtolength{\parskip}{4pt}

\begin{document}

\bibliographystyle{unsrt}

\title[Unitary perturbations of compressed shifts]{Unitary perturbations of compressed n-dimensional shifts}

\author{R.T.W. Martin}

\address{Department of Mathematics and Applied Mathematics \\ University of Cape Town\\
Cape Town, South Africa \\
phone: +27 21 650 5734 \\ fax: +27 21 650 2334}

\email{rtwmartin@gmail.com}

\begin{abstract}

    Given a purely contractive matrix-valued analytic function $\Theta$ on the unit disc $\bm{D}$,
we study the $\mc{U} (n)$-parameter family of unitary perturbations of the operator $Z_\Theta$
of multiplication by $z$ in the Hilbert space $L^2 _\Theta$ of $n-$component vector-valued functions
on the unit circle $\bm{T}$ which are square integrable with respect to the matrix-valued measure $\Om _\Theta$
determined uniquely by $\Theta$ and the matrix-valued Herglotz representation theorem.

    In the case where $\Theta $ is an extreme point of the unit ball of bounded $\bm{M} _n$-valued
functions we verify that the $\mc{U} (n)$-parameter family of unitary perturbations of $Z_\Theta ^*$
is unitarily equivalent to a $\mc{U} (n)$-parameter family of unitary perturbations of $X _\Theta$,
the restriction of the backwards shift in $H^2 _n (\bm{D})$, the Hardy space of $\bm{C} ^n$ valued functions on
the unit disc, to $K^2 _\Theta$, the de Branges-Rovnyak space constructed using $\Theta$.
These perturbations are higher dimensional analogues of the unitary perturbations introduced
by D.N. Clark in the case where $\Theta$ is a scalar-valued ($n=1$) inner function, and studied by E. Fricain
in the case where $\Theta$ is scalar-valued and an extreme point of the unit ball of $H^\infty (\bm{D})$.

    A matrix-valued disintegration theorem for the Aleksandrov-Clark measures associated with matrix-valued contractive
analytic functions $\Theta$ is obtained as a consequence of the Weyl integration formula for $\mc{U}(n)$ applied to
the family of unitary perturbations of $Z _\Theta$. This disintegration formula generalizes a recent result of S. Elliott
to arbitrary matrix-valued contractive analytic functions. Following results of Clark and Fricain in the
scalar case, a necessary and sufficient condition on $\Theta$ for $K^2 _\Theta$ to contain a total orthogonal set of
point evaluation or reproducing kernel vectors is provided.

\vspace{5mm}   \noindent {\it Key words and phrases}:
Hardy space, model subspaces, Aleksandrov disintegration theorem, Clark's unitary peturbations, Aleksandrov-Clark measures,
matrix-analytic functions, symmetric/isometric linear transformations

\vspace{3mm}
\noindent {\it 2010 Mathematics Subject Classification} ---30H10; 30H05; 47B32; 47B38; 46E22; 46B15; 46J15; 47B25

\end{abstract}

\maketitle

\section{Introduction}

    Let $\Theta$ be an $\bm{M} _n$-valued contractive analytic function on $\bm{D}$, the unit disc in the complex plane $\bm{C}$. Here
$\bm{M} _n$ denotes the $n \times n$ matrices with entries in $\bm{C}$. Recall \emph{cf.}
\cite[Proposition V.2.1]{Foias}, that $\Theta$ can be block-diagonalized as $\Theta = \Theta _0 \oplus \Theta _1$ where $\Theta _0$ is a unitary constant and
$\Theta _1$ is purely contractive, \emph{i.e.} $\| \Theta _1 (0) \| < 1$.  We will assume throughout that $\Theta $ is purely contractive.
For such a function it follows easily that $\| \Theta (z) \| <1 $ for all $z \in \bm{D}$. Recall that the function $\Theta$ is
said to be inner if $\Theta (\zeta )$, $\zeta \in \bm{T}$, is unitary a.e. with respect to Lebesgue measure on the unit circle $\bm{T}$ (here
$\Theta (\zeta)$ is the non-tangential limit of $\Theta (z)$ for $z$ approaching $\zeta$ non-tangentially in $\bm{D}$).

    Given any $A \in \ov{(\bm{M} _n) _1 }$, the closed unit ball of $\bm{M} _n$, let $\Theta _A := \Theta A^*$ and
define \be B _{\Theta _A} (z) := \frac{1 + \Theta (z) A^*}{1 -\Theta (z) A^*} .\ee This is clearly analytic on $\bm{D}$ since $\| A \|
\leq 1$ and $\| \Theta (z) \| < 1$ for all $z \in \bm{D}$.
It is straightforward to calculate that \be \re{B _{\Theta _A} (z)} := \frac{1}{2} (B _{\Theta _A} (z) + B _{\Theta _A} (z) ^*) = (1- \Theta (z) A^*) ^{-1} (1- \Theta (z) A^* A \Theta (z) ^* )
(1-A \Theta (z) ^* )^{-1} .\ee This is clearly positive so that by the matrix-valued Herglotz theorem \cite[Theorem 3]{Elliott}, it follows that for each such $A$ there is a unique positive $\bm{M} _n$ valued measure $\Om _{\Theta _A}$ on $\bm{T}$ such that
\be \re{B _{\Theta _A} (z)} = \re{\int _\bm{T} \frac{\zeta +z}{\zeta -z} \Om _{\Theta _A} (d\zeta) } .\ee The imaginary part of $B _{\Theta _A} (z)$
is \be \im{B _{\Theta _A} (z)} := \frac{1}{2i} ( B _{\Theta _A} (z) + B _{\Theta _A} (z) ^*) = -i (1-\Theta (z) A^*) ^{-1} (\Theta (z) A^*
-A \Theta (z) ^* ) (1 - A \Theta (z) ^*) ^{-1} .\ee It is then straightforward to calculate that
\be B _{\Theta _A} (z) = \int _\bm{T} \frac{\zeta +z}{\zeta -z} \Om _{\Theta _A} (d\zeta) + i \im{B _{\Theta _A} (0)} \label{eq:glotz}.\ee

    In the case where $A$ is unitary, the measures $\Om _{\Theta _A}$ are the matrix-valued Aleksandrov-Clark measures introduced in \cite{Elliott}.
In the case where $\Theta $ is a scalar-valued and $A \in \bm{T}$, these are the usual Aleksandrov-Clark measures, first introduced
in \cite{Clark-perturb}, and studied since by many authors. We will sometimes write $\Om _A$ and $B_A$ in place of $\Om _{\Theta _A}$
and $B_{\Theta _A}$ respectively when there is no chance of confusion.

     Given a contractive analytic function $\Theta$, let $L^2 _\Theta (\bm{T})$ or simply $L^2 _\Theta$ denote the
Hilbert space of $\bm{C} ^n$-valued functions on $\bm{T}$ which are square integrable with respect to the matrix-valued
measure $\Om _\Theta := \Om _{\Theta _{\bm{1}_n}}$.  Explicitly, let $\{e_i \} _{i=1} ^n$, $n = \mr{rank} (\Theta )$ be
a fixed orthonormal basis for $\bm{C} ^n$, $\Om _\Theta ( I )_{ij}$ the matrix entries of $\Om _\Theta ( I )$ with respect to this
basis ($I \subset \bm{T}$ is some fixed Borel set). The Hilbert space $L^2 _\Theta$ contains a copy of $\bm{C} ^n$.
It will be convenient to denote the embedding of $\bm{C} ^n$ into $L^2 _\Theta $ by $V_n$. Let $b _i ^- := V_n e_i$,
the $b_i ^-$ are the constant functions $b_i ^- (\zeta ) = e_i$, $\zeta \in \bm{T}$.
Elements $f,g \in L^2 _\Theta$, will be viewed as column
vectors of functions with entries $f_i (\zeta) := (f (\zeta) , b _i ^- (\zeta) )$, where $(\cdot , \cdot)$ denotes the inner product in $\bm{C} ^n$.
Then the inner product in $L^2 _\Theta$ is given by the formula
\be \left( f , g \right) _\Theta := \int _\bm{T} \left( \Om _\Theta (d\zeta ) f (\zeta ) , g(\zeta ) \right) = \sum _{i,j =1} ^n \int _\bm{T}
\ov{g _i (\zeta )} \Om _\Theta (d\zeta ) _{ij} f _j (\zeta ) .\ee
 Let $Z _\Theta$ denote the operator of multiplication by the independent variable $\zeta $ in this space. Let $\mf{D} _+
:= \bm{C} \{ b _i ^+ \} _{i=1} ^n$, the subspace spanned by the vectors $b _i ^+ (\zeta ) := 1/ \zeta  b _i ^- (\zeta )$.
Here $\bm{C} \{ b_i ^+ \}$ denotes the linear span of the set $\{ b_i ^+ \}$.
Then let $\mf{D} _- := Z_\Theta \mf{D} _+ = \bm{C} \{ b _i ^- \}$. Let $P_\pm$ denote the projectors onto $\mf{D} _\pm$.

In what follows, we assume that $\Theta (0) = 0$
so that $\Om _\Theta (\bm{T}) = \bm{1} _n$ and the $b _i ^\pm$ are orthonormal basis vectors for $\mf{D} _\pm$.
Given any $A \in \ov{ (\bm{M} _n) _1 } $, we will identify $A$ with the operator $\hat{A} \in \mc{B} (L^2 _\Theta)$ defined by
\be \hat{A} := \sum _{i,j =1} ^n ( \cdot , b _i ^- ) _\Theta A_{ij} b _j ^- = \left( (\cdot, b _1 ^- )_\Theta,
 ..., (\cdot , b _n ^- )_\Theta \right)  A  \left( \begin{array}{c} b _1 ^- \\ \vdots \\ b _n ^- \end{array} \right) . \ee We will
identify $\hat{A}$ with $A$ and simply write $A$ for $\hat{A}$ from now on.  For each such $A$ define $Z _\Theta (A) := Z_\Theta  + P_- (A -\bm{1} _n ) P_- Z _\Theta$, a perturbation of
$Z_\Theta$. To simplify notation, we will sometimes write $Z(A)$ in place of $Z_\Theta (A)$ when the choice of $\Theta $ is clear. If $A \in \mc{U} (n)$ then $Z_\Theta (A)$ is unitary, and $Z _\Theta (\bm{1}) =Z_\Theta$. Here $\mc{U} (n)$ denotes the group
of unitary $n \times n$ matrices. The family of unitary operators $Z_\Theta (U); \ \ U \in \mc{U} (n)$ can be seen as the family of unitary extensions of the simple isometric linear transformation $Z ' _\Theta := Z_\Theta (0) | _{L^2 _\Theta \ominus \mf{D} _+}$. This will be discussed in greater detail in Section \ref{section:symrep}.

    This paper will now proceed as follows. Consider $\La _{\Theta (U)} (I) := \chi _I ( Z_{\Theta} (U) )$,
where $U \in \mc{U} (n)$, $I$ is a Borel subset of $\bm{T}$, $\chi _I$ is the characteristic function of $I \subset \bm{T}$,
and $\chi _I (Z_\Theta (U))$ is a spectral projection defined using the Borel functional calculus for the unitary operator $Z_\Theta (U)$. In the
next section we will prove that $\Om _{\Theta _U} ( I ) = [ (\Om _{\Theta _U})  _{ij} ( I) ] = [ ( \La_{\Theta (U)} (I) b _i ^- ,
b _j ^- ) _\Theta ]$. With this identification and a straightforward application of the Weyl integration formula for the Lie
group $\mc{U} (n)$ a matrix-version of Aleksandrov's disintegration theorem for arbitrary $\bm{M} _n$-valued purely contractive analytic functions on $\bm{D}$ satisfying $\Theta (0) =0$ will be established. This will extend the main result of Elliott \cite[Theorem 15]{Elliott} which establishes the disintegration theorem for $\Theta$ which are the product of a scalar function in $(H^\infty (\bm{D}) )_1$ with an inner matrix function satisfying $\Theta (0) =0$.

In Section \ref{section:cauchyint}, the Cauchy integral representation for the de Branges-Rovnyak space $K^2 _\Theta$, associated with $\Theta$ as presented in \cite[Chapter III]{Sarason-dB}, is adapted to the case where $\Theta$ is $\bm{M} _n$-valued (we refer the reader to this section for the formal definition of $K^2 _\Theta$). In direct analogy with the scalar case it is shown that there is a unitary transformation $V_\Theta $ of $H^2 _\Theta $, the closure of the polynomials in $L^2 _\Theta$ onto the de Branges-Rovnyak space $K^2 _\Theta $ which takes $Z_\Theta ^*$ onto a rank $n$
perturbation of $X_\Theta := S^*| _{K^2 _\Theta}$, the restriction of the backwards shift $S^*$ to $K^2 _\Theta$. We will then verify that, as in the case where $\Theta$ is
scalar, $H^2 _\Theta = L^2 _\Theta$ if and only if $\Theta $ is an extreme point of the unit ball of $H^\infty _{\bm{M} _n} (\bm{D})$, the Hardy space of $\bm{M} _n$-valued analytic functions on $\bm{D}$ whose supremum norms on circles of radius $0\leq r <1$ are uniformly bounded.  We will further check that $\Theta$ is an extreme point if and only if the trace of $\ln (\bm{1} - | \Theta | ) $ fails to be Lebesgue integrable on $\bm{T}$. In the case that $\Theta $ is an extreme point
of $\left( H^\infty _{\bm{M} _n} (\bm{D}) \right) _1$, the open unit ball of $H^\infty _{\bm{M} _n} (\bm{D} )$ and $\Theta (0) = 0$, it will be verified that the image of  $Z_\Theta (0) ^*$ under the unitary transformation $V_\Theta$ is $X _\Theta$, and that the image of the family of unitary perturbations $Z_\Theta (U) ^*$ under this transformation is a $\mc{U} (n)$ family of unitary perturbations of the restricted backwards shift $X_\Theta$. In the case where $n=1$, this family is the $\mc{U} (1)$ family of
unitary perturbations introduced by D.N. Clark in \cite{Clark-perturb} for $\Theta $ inner.

    Given $\Theta \in \left( H^\infty _{\bm{M} _n } (\bm{D}) \right) _1$ and $z, w \in \bm{D}$, consider
the reproducing kernel matrix function $\Delta _w (z):= \frac{\bm{1} - \Theta (z) \Theta ^* (w) }{1-z \ov{w}}$. Then for any $\vec{x} \in \bm{C} ^n$,
$ \delta ^{\vec{x}} _z := \Delta _z  \vec{x}$ belongs to $K^2 _\Theta$ and is such that for any $f \in K^2 _\Theta$,
$\ip{f}{\delta^{\vec{x}} _z} _\Theta = (f (z) , \vec{x} ) $. Here $\ip{\cdot}{\cdot} _\Theta$ denotes the inner product
in $K^2 _\Theta$. We call the functions $\delta^{\vec{x}} _z$ the reproducing kernel functions or the point
evaluation functions at the point $z \in \bm{D}$. When $\Theta $ is scalar valued, \cite{Fricain} (see also \cite{Clark-perturb} for the inner case)
provides necessary and sufficient conditions on $\Theta$ for $K^2 _\Theta$ to have a total orthogonal set of point evaluation functions. In Section 4,
it is shown that these results have a direct and straightforward generalization to the case where $\Theta$ is matrix valued, and the spectrum
of the unitary perturbations $Z_\Theta (U)$ are calculated. In the process of achieving this, the analogues of several results on Carath$\mr{\acute{e}}$odory angular derivatives for contractive analytic functions on $\bm{D}$ as presented in \cite[Chapter VI]{Sarason-dB} are verified for matrix-valued $\Theta$.

Finally in Section \ref{section:symrep} we consider the isometric linear transformation $Z_\Theta ' := Z_\Theta (0) | _{\mc{D} _+ ^\perp}$. This is a simple
isometric linear transformation with deficiency indices $(n,n)$ and Lifschitz characteristic function equal to $\Theta$ \cite{Lifschitz2}. Let
$\mu (z) := \frac{z-i}{z+i} ; \ \ \mu : \bm{U} \rightarrow \bm{D}$ where $\bm{U}$ denotes the open upper half-plane. Then $\mu ^{-1} (z) = i \frac{1+z}{1-z}$. Using the theory of Lifschitz we determine when the inverse Cayley transform $\mu ^{-1} (Z_\Theta ')$ of $Z_\Theta '$ is a densely defined symmetric operator.
If $\Theta$ is inner, the canonical unitary transformation that takes $L^2 _\Theta = H^2 _\Theta$
to $K^2 _\Phi$ where $\Phi := \Theta \circ \mu $ is a contractive analytic function on $\bm{U}$ and $K^2 _\Phi = H^2 _n (\bm{U}) \ominus \Phi H^2 _n (\bm{U})$, maps $\mu ^{-1} (Z ' _\Theta ) $ onto $M_\Phi$, the symmetric operator of multiplication by $z$ in $K^2 _\Phi$. We verify that, as in the
scalar (n=1) case, $K^2 _\Phi$ has a $\mc{U} (n)$-parameter family of total orthogonal sets of point evaluation vectors $\{ \delta _{\la _j (U)} ^{\vec{x} _j (U) } \} _{j \in \bm{Z};  \ U \in \mc{U} (n)} $, such that the sequences $(\la _j (U)) \subset \bm{R}$ have no finite accumulation point (It will be shown in Section \ref{section:rkhssamp} that $(\la _j (U) ) _{j\in \bm{Z} }$ is necessarily a sequence of real values) if and only if $\Theta$ is analytic on some open neighbourhood of any given $x \in \bm{R}$. Here $K^2 _\Phi \subset H^2 _n (\bm{U}) \subset L^2 _n (\bm{R})$, where $L^2 _n (\bm{R})$ is the Hilbert space of $\bm{C}^n$ valued functions on $\bm{R}$ which are square integrable with respect to Lebesgue measure. This provides a class of vector-valued reproducing kernel Hilbert spaces of functions on $\bm{R}$ which have total orthogonal sets of point evaluation vectors.

Such reproducing kernel Hilbert spaces have the special property that their elements are perfectly reconstructible from the values they take on certain discrete sets of points. Indeed, suppose that $\mc{H}$ is a RKHS of $\bm{C} ^n$-valued functions on a set $X \subset \bm{C}$, \emph{i.e.} for any $\vec{y} \in \bm{C} ^n$ and any $x \in X$, the linear functional which evaluates
an element $f \in \mc{H}$ at $x$ and takes its inner product with $\vec{y}$ is bounded. By the Riesz representation theorem, for each $\vec{y} \in \bm{C} ^n$
and $x \in X$, there is then a `point evaluation vector' $\delta _x ^{\vec{y}} \in \mc{H}$ such that $\ip{ f}{\delta _x ^{\vec{y}}} = \left( f(x) , \vec{y} \right)$. Here $\ip{\cdot}{\cdot}$ is the inner product on $\mc{H}$ and $(\cdot , \cdot)$ is, as before, the inner product in $\bm{C} ^n$. If $\mc{H}$ has a total orthogonal set of point evaluation vectors $\{ \delta _{x_j} ^{\vec{y} _j} \}$, then it follows that for any $f \in \mc{H}$, \be f = \sum _j \ip{f}{\delta _{x_j } ^{\vec{y} _j} } \frac{\delta _{x_j} ^{\vec{y} _j}}{\| \delta _{x_j} ^{\vec{y} _j} \| ^2 } = \sum _j \left( f(x_j) , \vec{y} _j \right) \frac{\delta _{x_j} ^{\vec{y} _j} }{\left( \delta _{x_j} ^{\vec{y} _j}
( x_j ) , \vec{y} _j \right)} .\ee This shows that any element $f \in \mc{H}$ can be perfectly reconstructed from the values
$\{ \left( f(x_j ) , \vec{y} _j \right) \}$, the $\bm{C} ^n$-inner products of its values taken on the set of points $\{x _j \} \subset X$ with
the vectors $ \vec{y} _j  \in \bm{C} ^n$.

\section{The Matrix-valued Aleksandrov disintegration theorem}

\label{section:ACdis}

    Throughout this section we will assume that $\Theta (0) = 0$.

\subsection{Identification of the matrix-valued Aleksandrov-Clark measures}
\label{subsection:ACmeas}

The purpose of this subsection is to establish two key facts needed for the proof of the disintegration theorem. First it will be shown that the Aleksandrov-Clark measures $\Om _{\Theta _U}$ associated with $\Theta$ and unitary $U \in \mc{U} (n)$ are such that $(\Om _{\Theta _U} ( I ) )_{ij} = ( \chi _I (Z_\Theta (U) ) b ^+ _i , b ^+ _j ) _\Theta$ where $b ^+ _i (z) = 1/z b _i ^-$ for $z \in \bm{T}$, and $b _i ^-$ are the constant co-ordinate functions. Recall here that $\chi _I$ is the characteristic function of the Borel subset $I \subset \bm{T}$. In fact, we will establish something stronger than this. Given any $A \in (\bm{M} _n )  _1$ the operator $Z_\Theta (A)$ is a completely non-unitary contraction since if $\| A \| <1$, any unitary restriction of $Z_\Theta (A)$ would have to be a unitary restriction of $Z_\Theta $ to a subspace orthogonal to $\mf{D} _+ = \bm{C} \{ 1/z b _i ^- \}$. This is not possible. If $Z_\Theta$ has a unitary restriction to a subspace $S$ which is orthogonal to $\mf{D} _+$, then $S$ is reducing for $Z_\Theta$
and $Z_\Theta ^k S = S $ for all $k \in \bm{Z}$. Since $Z_\Theta$ is unitary it would then follow that $S$ is orthogonal to $\bigvee _{k\in \bm{Z}}
\{ z^k b_i ^- \} _{i=1} ^n$ which is dense in $L^2 _\Theta$ so that $S = \{ 0 \}$. Let $U_A$ acting on $\mc{K} _A \supset L^2 _\Theta$ be the minimal unitary dilation of $Z_\Theta (A)$, and let $P_A$ be the orthogonal projection of $\mc{K} _A$ onto $L^2 _\Theta$. We will show
that the positive matrix-valued measure $\La _A ( I ) := P_A \chi _I ( U_A) P_A$, $I \in \mr{Bor} (\bm{T})$ is such that $\La _A ( I ) _{ij} = (\La_A ( I ) e_i , e_j ) = ( \Om _{\Theta _A} ( I ) b _i ^+, b ^+ _j ) _\Theta$. Here $\mr{Bor} (\bm{T})$ denotes the Borel subsets of $\bm{T}$.

 Secondly we will show that $\Om _0 = m $ where $m$ denotes $\bm{M} _n$-valued
normalized Lebesgue measure on $\bm{T}$, \emph{i.e.} $m ( I ) _{ij} = \mu ( I ) \delta _{ij}$, and $\mu$ is normalized Lebesgue measure on $\bm{T}$,
and $\delta _{ij}$ is the Kronecker delta.

    To identify the matrix valued Aleksandrov-Clark measures $\Om _{\Theta _A}$ with the spectral measures associated with the perturbations $Z_\Theta (A)$,
for any $A \in \ov{(\bm{M} _n) _1}$, we will apply the following Proposition taken from \cite[Proposition 14]{Elliott}.  Although the statement of the
proposition in \cite{Elliott} assumes that $A$ is unitary, the proof for general $A$ is identical.

\begin{prop}{ (S. Elliott) }
    Let $\Theta : \bm{D} \rightarrow \bm{M} _n$ be analytic and purely contractive with $\Theta (0) =0$. Then for any $A \in \ov{(\bm{M} _n) _1}$,
\be \int _\bm{T} \zeta ^n \Om _{\Theta _A} (d\zeta ) = \left\{ \begin{array}{cc}
\sum _{k=1} ^n \int _\bm{T} \zeta ^{-n} (A\Theta (\zeta ) ^* ) ^k  m (d\zeta) & n \geq 1 \\
\sum _{k=1} ^{|n|} \int _\bm{T} \zeta ^n (\Theta (\zeta ) A^*) ^k m (d\zeta) & n \leq -1 \\
\bm{1} & n =0 \end{array} \right. \ee \label{prop:Elliott}
\end{prop}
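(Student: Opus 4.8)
The plan is to recover the Fourier coefficients of $\Om _{\Theta _A}$ by comparing two power-series expansions of $B _{\Theta _A}$ and matching coefficients. First I would expand the Herglotz kernel: for $z \in \bm{D}$ and $\zeta \in \bm{T}$, setting $w = z/\zeta$ with $|w| < 1$ gives $\frac{\zeta + z}{\zeta - z} = 1 + 2 \sum _{p \geq 1} \zeta ^{-p} z^p$. Integrating against $\Om _{\Theta _A}$ and using the representation \eqref{eq:glotz}, I obtain
\[ B _{\Theta _A} (z) = \int _\bm{T} \Om _{\Theta _A} (d\zeta ) + 2 \sum _{p \geq 1} z^p \int _\bm{T} \zeta ^{-p} \Om _{\Theta _A} (d\zeta ) + i \im{B _{\Theta _A} (0)} . \]
Since $\Theta (0) = 0$ we have $B _{\Theta _A} (0) = \bm{1}$, so $\im{B _{\Theta _A} (0)} = 0$ and $\int _\bm{T} \Om _{\Theta _A} (d\zeta ) = \bm{1}$; this settles the case $n = 0$. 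Matching the coefficient of $z^p$ then shows that for each $p \geq 1$ the integral $\int _\bm{T} \zeta ^{-p} \Om _{\Theta _A} (d\zeta )$ equals one half the $p$-th Taylor coefficient of $B _{\Theta _A}$ at the origin, reducing the problem to computing those Taylor coefficients.

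Next I would expand $B _{\Theta _A}$ in the other direction. Because $\| \Theta (z) A^* \| < 1$ on $\bm{D}$, the Neumann series gives $B _{\Theta _A} (z) = (\bm{1} + \Theta (z) A^*)(\bm{1} - \Theta (z) A^*) ^{-1} = \bm{1} + 2 \sum _{k \geq 1} (\Theta (z) A^*) ^k$. Writing $\Theta (z) = \sum _{j \geq 1} \Theta _j z^j$, which has no constant term since $\Theta (0) = 0$, the lowest-order term of $(\Theta (z) A^*) ^k$ is of order $z^k$, so only the summands with $k \leq p$ contribute to the coefficient of $z^p$. Each such coefficient is a Taylor coefficient of a function in the unit ball of $H ^\infty _{\bm{M} _n} (\bm{D})$, hence equals the corresponding Fourier coefficient of its boundary values, namely $\int _\bm{T} \zeta ^{-p} (\Theta (\zeta ) A^*) ^k m (d\zeta )$. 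Combining this with the previous paragraph yields, for $n = -p \leq -1$, the identity $\int _\bm{T} \zeta ^n \Om _{\Theta _A} (d\zeta ) = \sum _{k=1} ^{|n|} \int _\bm{T} \zeta ^n (\Theta (\zeta ) A^*) ^k m (d\zeta )$, which is the second line of the claim. For $n \geq 1$ I would avoid repeating the computation and instead use Hermitian symmetry: since $\Om _{\Theta _A}$ is a positive, hence self-adjoint, matrix-valued measure and $\ov{\zeta} = \zeta ^{-1}$ on $\bm{T}$, one has $\left( \int _\bm{T} \zeta ^{-n} \Om _{\Theta _A} (d\zeta ) \right) ^* = \int _\bm{T} \zeta ^n \Om _{\Theta _A} (d\zeta )$. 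Taking adjoints in the $n \leq -1$ identity and using $\left( (\Theta A^*) ^k \right) ^* = (A \Theta ^*) ^k$ then converts each term into $\int _\bm{T} \zeta ^n (A \Theta (\zeta ) ^*) ^k m (d\zeta )$, giving the remaining line.

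The analytic points requiring care are the termwise manipulations. The interchange of the integral over $\bm{T}$ with the infinite sum in the kernel expansion is justified by dominated convergence, using the uniform geometric bound on $\frac{\zeta + z}{\zeta - z}$ for $z$ in a fixed compact subset of $\bm{D}$ together with the finiteness $\Om _{\Theta _A} (\bm{T}) = \bm{1}$; the extraction of Taylor coefficients from the Neumann series is legitimate because $\sup _{|z| \leq r} \| \Theta (z) A^* \| < 1$ makes that series converge uniformly on compact subsets of $\bm{D}$. I expect the main point to keep straight is the bookkeeping that identifies a Taylor coefficient at the origin with a boundary Fourier coefficient; this is the standard $H ^\infty$ fact and, crucially, requires no regularity of the possibly singular measure $\Om _{\Theta _A}$, since on the measure side the passage between Taylor and Fourier data comes directly from the kernel expansion itself.
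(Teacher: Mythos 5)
The paper offers no proof of this proposition at all: it is quoted from \cite[Proposition 14]{Elliott} with only the remark that Elliott's argument for unitary $A$ goes through verbatim for general $A \in \ov{(\bm{M}_n)_1}$. Your argument is therefore the only proof on the table, and it is a complete and correct one, following the natural route: expand the Herglotz kernel as $\frac{\zeta+z}{\zeta-z} = 1 + 2\sum_{p\geq 1}\ov{\zeta}^p z^p$ and integrate term by term against the finite positive measure $\Om_{\Theta_A}$; expand $B_{\Theta_A} = \bm{1} + 2\sum_{k \geq 1}(\Theta A^*)^k$ by the Neumann series; match Taylor coefficients; and convert the Taylor coefficients of the bounded analytic functions $(\Theta(z)A^*)^k$ into boundary Fourier integrals. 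The hypothesis $\Theta(0)=0$ enters exactly where you use it: it gives $\im{B_{\Theta_A}(0)} = 0$ and $\Om_{\Theta_A}(\bm{T}) = \bm{1}$ (the $n=0$ case), and it truncates the sum to $k \leq p$. The Hermitian symmetry argument for passing from negative to positive moments is also sound, since $\Om_{\Theta_A}$ takes self-adjoint values. This is, moreover, precisely the reading of the proposition that the paper relies on later (it is what justifies $l_j(A) = \int_{\bm{T}} z^{-j}\Om_{\Theta_A}(dz)$ in Section 2), so your self-contained derivation fills a real gap in the exposition.

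One discrepancy you should have flagged rather than glossed over: your adjoint step yields, for $n \geq 1$, the identity $\int_{\bm{T}}\zeta^n \Om_{\Theta_A}(d\zeta) = \sum_{k=1}^n \int_{\bm{T}} \zeta^{\,n}\,(A\Theta(\zeta)^*)^k\, m(d\zeta)$, with $\zeta^{+n}$ in the integrand, whereas the first line of the printed statement has $\zeta^{-n}$. These are not the same, and the printed version cannot be correct as it stands: since $\Theta(0)=0$, the boundary function $(A\Theta(\zeta)^*)^k$ has Fourier series supported on frequencies $\leq -k$, so $\int_{\bm{T}}\zeta^{-n}(A\Theta(\zeta)^*)^k\, m(d\zeta) = 0$ for all $n, k \geq 1$, while the left-hand side is generally nonzero (take the scalar example $\Theta(z) = cz$, $A = 1$, which gives $\ov{c}^{\,n}$). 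So the printed first line contains a sign typo in the exponent, and what you derived is the corrected — and evidently intended — statement, consistent with the adjoint of the $n \leq -1$ line and with every use the paper makes of the proposition. The mathematics in your proposal is right; the only inaccuracy is the phrase ``giving the remaining line,'' since what you actually prove differs from the line as printed, and a careful write-up should say so explicitly.
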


\begin{prop}
    Let $\Theta \in \left( H^\infty _{\bm{M} _n} (\bm{D}) \right) _1$ be a purely contractive
analytic function with $\Theta (0) = 0$. Then $( \Om _{\Theta _A} (\cdot)  e_i , e_j ) = (\La_A (\cdot) b _i ^-,
b _j ^- ) _\Theta$ where $\La_A$ is the positive $\bm{M} _n$-valued measure associated with $Z_\Theta (A)$. \label{prop:acmeasure}
\end{prop}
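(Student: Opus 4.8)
The plan is to prove equality of the two $\bm{M}_n$-valued measures $I\mapsto[(\Om_{\Theta_A}(I)e_i,e_j)]$ and $I\mapsto[(\La_A(I)b_i^-,b_j^-)_\Theta]$ on $\bm{T}$ by showing that they share all their trigonometric moments. Both are finite positive measures (note $\La_A(\bm{T})=P_A|_{L^2_\Theta}=\bm{1}$), and since the trigonometric polynomials are dense in $C(\bm{T})$ a finite Borel measure on $\bm{T}$ is determined by the numbers $\int_\bm{T}\zeta^n(\cdot)$, $n\in\bm{Z}$. Hence it suffices to establish, for every $n\in\bm{Z}$,
\be \int_\bm{T}\zeta^n\,(\La_A(d\zeta)b_i^-,b_j^-)_\Theta=\left(\int_\bm{T}\zeta^n\,\Om_{\Theta_A}(d\zeta)\,e_i,e_j\right). \ee

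First I would evaluate the left-hand moments. Since $b_i^-,b_j^-\in L^2_\Theta$ and $P_A$ is the projection of $\mc{K}_A$ onto $L^2_\Theta$, the Borel functional calculus gives $\int_\bm{T}\zeta^n(\La_A(d\zeta)b_i^-,b_j^-)_\Theta=(U_A^n b_i^-,b_j^-)_{\mc{K}_A}$. Because $U_A$ is the minimal unitary dilation of the completely non-unitary contraction $Z_\Theta(A)$, the power-dilation identity $P_A U_A^n|_{L^2_\Theta}=Z_\Theta(A)^n$ for $n\geq0$ turns this into $(Z_\Theta(A)^n b_i^-,b_j^-)_\Theta$ for $n\geq0$, while for $n<0$ unitarity of $U_A$ produces the complex conjugate of the $|n|$-th quantity with $i,j$ interchanged. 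As both matrix measures are Hermitian, it is therefore enough to treat $n\geq0$.

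The heart of the matter is then to compute the perturbed powers $(Z_\Theta(A)^n b_i^-,b_j^-)_\Theta$ and to recognise Proposition \ref{prop:Elliott} in the answer. Writing $Z_\Theta(A)=Z_\Theta+P_-(A-\bm{1}_n)P_-Z_\Theta$, I would exploit three structural facts: that $Z_\Theta$ is the unitary multiplication by $\zeta$; that the perturbation has range in $\mf{D}_-$ and kills $\mf{D}_-^\perp$; and that, since $Z_\Theta f\perp\mf{D}_-$ whenever $f\perp\mf{D}_+$, the operator $Z_\Theta(A)$ coincides with $Z_\Theta$ on $\mf{D}_+^\perp$. Feeding in the elementary data $(b_i^-,b_j^-)_\Theta=\delta_{ij}$, $(Ab_l^-,b_j^-)_\Theta=A_{lj}$, and the unperturbed moments $\int_\bm{T}\zeta^k\Om_\Theta(d\zeta)$ (the $A=\bm{1}_n$ case of Proposition \ref{prop:Elliott}), an induction on $n$ should express $(Z_\Theta(A)^n b_i^-,b_j^-)_\Theta$ as a sum of integrals $\int_\bm{T}\zeta^n(A\Theta(\zeta)^*)^k\,m(d\zeta)$, which is exactly the entrywise content of Proposition \ref{prop:Elliott}. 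A cleaner way to package all moments at once is to pass to the resolvent generating function: using $(\bm{1}-zU_A^*)^{-1}=\sum_{k\geq0}z^k(U_A^*)^k$ together with the dilation property, one finds for $z\in\bm{D}$ that
\be \int_\bm{T}\frac{\zeta+z}{\zeta-z}\,(\La_A(d\zeta)b_i^-,b_j^-)_\Theta=\left((\bm{1}+zZ_\Theta(A)^*)(\bm{1}-zZ_\Theta(A)^*)^{-1}b_i^-,\,b_j^-\right)_\Theta, \ee
and it then remains only to identify the right-hand side with $(B_{\Theta_A}(z)e_i,e_j)-i(\im{B_{\Theta_A}(0)}e_i,e_j)$, after which uniqueness of the Herglotz measure in \eqref{eq:glotz} closes the argument without recourse to the moment problem.

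I expect the main obstacle to be precisely this identification: converting the abstract resolvent $(\bm{1}-zZ_\Theta(A)^*)^{-1}b_i^-$, equivalently the perturbed powers $Z_\Theta(A)^n b_i^-$, into the explicit Fourier coefficients of the powers $(A\Theta(\zeta)^*)^k$ occurring in Proposition \ref{prop:Elliott}. The genuinely delicate issue is one of conventions rather than of analysis: one must keep careful track of how the identification $A\leftrightarrow\hat A$ and the pairing against the constants $b_i^-$ introduce transposes and adjoints, so that the two sides match entry by entry; a direct check of the cases $n=0$ and $n=1$ already pins down the correct placement of $A$ versus $A^*$. Once the identity is verified for $n\geq0$, Hermitian symmetry supplies it for $n<0$, all moments then agree, and we conclude that $(\Om_{\Theta_A}(I)e_i,e_j)=(\La_A(I)b_i^-,b_j^-)_\Theta$ for every Borel set $I\subset\bm{T}$.
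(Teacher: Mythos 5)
Your overall strategy coincides with the paper's own: reduce the statement to equality of all trigonometric moments, use the power-dilation identity $P_A U_A^k|_{L^2_\Theta} = Z_\Theta(A)^k$ ($k \geq 0$) so that the moments of $\La_A$ become the compressed perturbed powers $\left( (Z_\Theta(A)^*)^k b_i^-, b_j^- \right)_\Theta$, dispose of negative $k$ by Hermitian symmetry and of $k=0$ by $\Theta(0)=0$, and invoke Proposition \ref{prop:Elliott} to identify the moments of $\Om_{\Theta_A}$. Up to that point what you write is correct and is exactly how the paper's proof is organized.

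The gap is the step you defer with ``an induction on $n$ should express $(Z_\Theta(A)^n b_i^-,b_j^-)_\Theta$ as a sum of integrals $\int_\bm{T}\zeta^n(A\Theta(\zeta)^*)^k\,m(d\zeta)$.'' That identity is the entire mathematical content of the proposition, and it does not follow routinely from the ``elementary data'' you list. In the paper it occupies three lemmas plus a computation: writing $l_k(A)$ for the $k$-th Taylor coefficient of $\sum_{j=1}^{k}(\Theta(z)A^*)^j$ (by Proposition \ref{prop:Elliott} these are the moments of $\Om_{\Theta_A}$) and $l_k := l_k(\bm{1})$ for the unperturbed moments $P_-Z_\Theta^{-k}P_-$, one needs first the coefficient recurrence $l_k(A) = c_kA^* + \sum_{j=1}^{k-1}c_jA^*l_{k-j}(A)$ (Lemma \ref{lemming:idone}), then a combinatorial rearrangement of double sums (Lemma \ref{lemming:combo}) to eliminate the Taylor coefficients $c_j$ of $\Theta$ and arrive at the mixed recurrence $l_k(A) = l_kA^* + \sum_{j=1}^{k-1}l_j(A^*-\bm{1})l_{k-j}(A)$ (Lemma \ref{lemming:formula}), which involves only the unperturbed moments; and finally a term-by-term expansion of $P_-(Z_\Theta(A)^*)^kP_-$ showing it obeys the identical recurrence with identical initial data. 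None of this machinery appears in your proposal, and your diagnosis that ``the genuinely delicate issue is one of conventions rather than of analysis,'' to be settled by checking $n=0,1$, misplaces the difficulty: verifying low-order cases fixes the placement of $A^*$ but cannot certify the identity for all $n$, whose nontrivial content is precisely the combinatorial matching of the two recurrences. Your alternative resolvent packaging has the same hole, since identifying $\left((\bm{1}+zZ_\Theta(A)^*)(\bm{1}-zZ_\Theta(A)^*)^{-1}b_i^-,b_j^-\right)_\Theta$ with the entries of $B_{\Theta_A}(z)$ is just the generating-function form of the same unproven identity; ``it then remains only to identify'' is the whole theorem, not a remaining detail.
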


\subsubsection{Notation} Here $(e_i )  _{i=1} ^n$ will be an orthonormal basis of $\bm{C} ^n$ that is fixed throughout this paper. As in the
introduction $V_n : \bm{C} ^n \rightarrow L^2 _\Theta$ is an isometry defined by $V_n e_i = b _i ^-$, where the $b _i ^-$ form an
orthonormal basis for $\mf{D} _-$, the copy of $\bm{C} ^n$ in $L^2 _\Theta$. The above proposition can be stated more succinctly as
$V_n ^* P_- \La _A  P_- V_n = \Om _{\Theta _A} $.

    Recall that if $A=U$ is unitary then $\La _U ( I) = \chi _I (Z_\Theta (A))$ is a projection for any $I \subset \mr{Bor} (\bm{T})$,
the Borel subsets of $\bm{T}$.
To simplify the presentation of the proof, we first establish a few lemmas. Consider the power series for $\Theta$,
$\Theta (z) := \sum _{k=1} ^\infty c_k z^k$, $c_k \in \bm{M} _n$ (recall we assume that $\Theta (0) = 0$). Let $l_j (A)$ denote
the $j^{\mathrm{th}}$ coefficient in the power series of $\sum _{k=1} ^j (\Theta (z) A^*) ^k =: \Phi (z)$, and observe that by
Proposition \ref{prop:Elliott}, $l_j (A) = \int _{\bm{T}} z^{-j} \Om _{\Theta _A} (dz)$, and that $l_j (\bm{1}) = V_n ^* P_- Z _\Theta ^{-j} P_- V_n$
since $( Z_\Theta ^{-j} b_i ^- , b_j ^- ) _\Theta = \int _{\bm{T}} \zeta ^{-j} ( \Om _\Theta (d\zeta ) e_i , e_j ) = ( l_j (\bm{1} ) e_i ,e_j )$
for all $j \in \bm{N} \cup \{ 0\}$.

\begin{lemming}
    The coefficients $l_k (A) $ obey the recurrence relations $l_k (A) = c_k A^* + \sum _{j=1} ^{k-1} c_{j} A^* l_{k-j} (A)
    = c_k A^* + \sum _{j=1} ^{k-1} l_{j} (A) c_{k-j}A^*$. \label{lemming:idone}
\end{lemming}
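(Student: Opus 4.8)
The plan is to prove the recurrence directly from the definition of the coefficients $l_k(A)$ as the power-series coefficients of $\Phi(z) = \sum_{k=1}^{\infty} (\Theta(z) A^*)^k$. First I would observe that $\Phi$ satisfies the functional equation $\Phi(z) = \Theta(z) A^* + \Theta(z) A^* \Phi(z)$, which one verifies by factoring out one copy of $\Theta(z) A^*$ from the geometric-type sum; equivalently $\Phi = \Theta A^* (\bm{1} + \Phi)$. Since $\| \Theta(z) A^* \| < 1$ for $z \in \bm{D}$ (as $\| \Theta(z) \| < 1$ and $\| A \| \leq 1$), this series converges in operator norm uniformly on compact subsets, so $\Phi$ is a well-defined analytic $\bm{M}_n$-valued function and its Taylor coefficients are exactly the $l_k(A)$, with $l_k(A) = 0$ for $k \leq 0$.

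The main step is then to extract the coefficient of $z^k$ from the functional equation. Writing $\Theta(z) A^* = \sum_{j=1}^{\infty} c_j A^* z^j$ and $\Phi(z) = \sum_{m=1}^{\infty} l_m(A) z^m$, the product $\Theta(z) A^* \Phi(z)$ has $z^k$-coefficient equal to the Cauchy product $\sum_{j=1}^{k-1} c_j A^* \, l_{k-j}(A)$ (the index ranges are fixed by $j \geq 1$ and $k - j \geq 1$). Comparing the $z^k$-coefficient of $\Phi(z) = \Theta(z) A^* + \Theta(z) A^* \Phi(z)$ yields
\be
l_k(A) = c_k A^* + \sum_{j=1}^{k-1} c_j A^* \, l_{k-j}(A),
\ee
which is the first of the two claimed identities. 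For the second identity I would instead use the mirror functional equation $\Phi = (\bm{1} + \Phi) \Theta A^*$, obtained by factoring out the copy of $\Theta(z) A^*$ on the right rather than the left; extracting the $z^k$-coefficient of $\Phi(z) \Theta(z) A^*$ gives $\sum_{j=1}^{k-1} l_j(A) \, c_{k-j} A^*$, so that $l_k(A) = c_k A^* + \sum_{j=1}^{k-1} l_j(A) c_{k-j} A^*$.

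I do not expect any serious obstacle here; the content is essentially the bookkeeping for a Cauchy product applied to a self-referential power series. The only points requiring a little care are justifying that one may legitimately factor the geometric sum on either side to obtain the two functional equations (both follow from $(\Theta A^*)^{k} = \Theta A^* \cdot (\Theta A^*)^{k-1} = (\Theta A^*)^{k-1} \cdot \Theta A^*$ together with convergence, which permits rearranging the absolutely convergent series), and pinning down the summation limits so that the convolution index $j$ runs from $1$ to $k-1$ because both $c_j A^*$ and the $l$-coefficients vanish outside the positive indices. The identification $l_j(\bm{1}) = V_n^* P_- Z_\Theta^{-j} P_- V_n$ recorded before the lemma is not needed for this proof, but it confirms that these coefficients are the matrix moments we ultimately care about.
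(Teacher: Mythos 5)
Your proof is correct and takes essentially the same approach as the paper: both arguments factor a single copy of $\Theta(z)A^*$ out of the sum (on the left for the first identity, and on the right --- equivalently, by commutativity of powers of $\Theta A^*$ --- for the second) and then read off the $z^k$-coefficient as a Cauchy product. The only cosmetic difference is that you work with the full geometric-type series and its functional equation, while the paper works with the finite truncation $\Theta A^* + \cdots + (\Theta A^*)^k$, which has the same $z^k$-coefficient since $\Theta(0)=0$.
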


\begin{proof}
    By definition $c_k$ is the $k^{\mr{th}}$ coefficient of $\Theta (z)$ and $l_k (A) $ is the $k^{\mr{th}}$ coefficient
of $\Theta (z) A^* + ... + (\Theta (z) A^*) ^k$. Let $\Gamma _k $ denote the linear functional which picks out the $k^{\mr{th}}$
coefficient of a power series. Then clearly
\ba l_k (A) &  = & c_k A^* + \Gamma _k [ (\Theta (z) A^*) ^2 + ... + (\Theta (z) A^*) ^k ]  \nonumber \\
& = & c_k A^* + \Gamma _k [ (\Theta (z) A^*) \left( \Theta (z) A^* + ... + (\Theta (z) A^*) ^{k-1} \right) ]. \ea
    Let $b_j$ denote the coefficients in the power series of $\Phi (z) := \Theta (z) A^* + ... + (\Theta (z) A^* ) ^{k-1}$.
Since $\Theta (0) = 0 = c_0$, it is easy to see that $b_j = l_j (A)$ for all $1\leq j \leq k-1$. Hence it follows
that  $\Gamma _k [\Theta (z) A^* \Phi (z)] = c_1 A^* l_{k-1} (A) + c_2 A^* l_{k-2} (A)  + ... + c_{k-1} A^* l_1 (A) = \sum _{j=1} ^{k-1} c _{j} A^*
l_{k-j} (A)$. Also since $\Theta (z) A^*$ commutes with $\Phi (z)$ it follows that $\Gamma _k [\Theta (z) A^* \Phi (z) ]
= \Gamma _k [\Phi (z) \Theta (z) A^* ] = \sum _{j=1} ^{k-1} l _{k-j} (A) c_j A^* = \sum _{j=1} ^{k-1} l_j (A) c_{k-j} A^*$.
\end{proof}

The following combinatorial fact will be needed:

\begin{lemming}
    Let $(a_i )$, $(b_i)$, and $(c_i)$, $i=1, ... ,n$ be arbitrary sequences of (in general non-commuting) variables. Then the sum
$\sum _{i=1} ^{n-1} \sum _{j=1} ^{n-i-1} a_i b_j c_{n-i-j}$ is a rearrangement of the sum $\sum_{i=1} ^{n-1}
\sum _{j=1} ^{i-1} a_j b_{i-j} c_{n-i}$.  \label{lemming:combo}
\end{lemming}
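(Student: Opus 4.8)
The plan is to recognize both double sums as sums indexed over one and the same set of triples of positive integers with fixed total $n$, each carrying the identical term $a_p b_q c_r$. Since the variables do not commute we cannot invoke commutativity to equate individual products; instead, establishing that every term $a_p b_q c_r$ (with the fixed slot order $a$, then $b$, then $c$) occurs exactly once in each of the two sums shows that the two sums have the same multiset of summands, hence that one is a rearrangement of the other.

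First I would analyze the left-hand sum $\sum_{i=1}^{n-1}\sum_{j=1}^{n-i-1} a_i b_j c_{n-i-j}$. Setting $k := n-i-j$, the constraints $1\le i\le n-1$ and $1\le j\le n-i-1$ are equivalent to $i\ge 1$, $j\ge 1$, $k\ge 1$ together with $i+j+k=n$: the requirement $j\le n-i-1$ is exactly $k\ge 1$, the forced bound $i\le n-2$ is subsumed, and the nominal value $i=n-1$ contributes only an empty inner sum. Thus the left-hand sum equals $\sum_{(p,q,r)} a_p b_q c_r$, where the index runs over $T_n := \{(p,q,r) : p,q,r\ge 1, \ p+q+r=n\}$.

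Next I would treat the right-hand sum $\sum_{i=1}^{n-1}\sum_{j=1}^{i-1} a_j b_{i-j} c_{n-i}$ via the substitution $p := j$, $q := i-j$, $r := n-i$. The constraints $1\le i\le n-1$ and $1\le j\le i-1$ translate precisely into $p\ge 1$, $q\ge 1$, $r\ge 1$, and $p+q+r=n$, and this assignment is a bijection from $\{(i,j): 1\le i\le n-1,\ 1\le j\le i-1\}$ onto $T_n$, with inverse $i=n-r$, $j=p$. Under it the summand $a_j b_{i-j} c_{n-i}$ becomes $a_p b_q c_r$, so the right-hand sum likewise equals $\sum_{(p,q,r)\in T_n} a_p b_q c_r$.

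Comparing the two computations, both sums enumerate exactly the terms $a_p b_q c_r$ for $(p,q,r)\in T_n$, each occurring once, and therefore are rearrangements of one another. The only step requiring genuine care — and the sole potential pitfall — is verifying that the two changes of variable really are bijections onto $T_n$, so that no boundary triple is dropped and none is counted twice; the rest is routine bookkeeping of the index ranges.
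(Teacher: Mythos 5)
Your proof is correct, and it takes a different route from the paper: the paper omits the proof entirely, remarking only that the lemma ``can be established with a straightforward proof by induction,'' whereas you give a direct, non-inductive argument by re-indexing. Your approach identifies both double sums with the single canonical sum $\sum_{(p,q,r)} a_p b_q c_r$ over the set $T_n = \{(p,q,r) : p,q,r \geq 1,\ p+q+r = n\}$ of compositions of $n$ into three positive parts, checking that each change of variables is a bijection onto $T_n$ (including the boundary observations that $i=n-1$ in the first sum and $i=1$ in the second contribute empty inner sums). This is arguably the cleaner argument: it explains \emph{why} the identity holds --- both sums enumerate the same index set in different orders --- and it correctly isolates the one point needing care for non-commuting variables, namely that one must match formal summands $a_p b_q c_r$ one-to-one rather than appeal to any algebraic manipulation. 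An induction on $n$, by contrast, would require relating the sums for $n$ to those for $n-1$ and tends to obscure the underlying bijection; it buys nothing here except conformity with the paper's hint. Your verification of the index-range translations (e.g., $j \leq n-i-1 \Leftrightarrow k \geq 1$, and $1 \leq j \leq i-1 \Leftrightarrow p,q \geq 1$) is exactly the bookkeeping that must be done, and it is done correctly.
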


    The above lemma can be established with a straightforward proof by induction. We omit the proof.
Let $l_j := l_j (\bm{1})$.

\begin{lemming}
    The $l_j$ and $l_j (A)$ obey the recurrence relation
\be l_k (A) = l _k A^* + \sum _{j=1} ^{k-1} l_j (\bm{1}) [ A^* - \bm{1} ] l_{k-j} (A) .\ee \label{lemming:formula}
\end{lemming}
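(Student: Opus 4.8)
The plan is to derive the identity directly from the two recurrence relations of Lemma~\ref{lemming:idone}, using the rearrangement of Lemma~\ref{lemming:combo} to reconcile the resulting triple sums; no induction should be needed. Throughout I would keep every matrix product in its given order, since $A^*$ does not commute with the coefficients $c_j$, and allow only the reindexing sanctioned by Lemma~\ref{lemming:combo}. Recall that Lemma~\ref{lemming:idone} gives, in its first (``$c$ on the left'') form, $l_m(A) = c_m A^* + \sum_{j=1}^{m-1} c_j A^* l_{m-j}(A)$, while specializing its second (``$l$ on the left'') form to $A = \bm{1}$ yields $l_m = c_m + \sum_{j=1}^{m-1} l_j c_{m-j}$, where $l_m = l_m(\bm{1})$.

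First I would expand the right-hand side of the claimed identity as
\[ l_k A^* + \sum_{j=1}^{k-1} l_j (A^* - \bm{1}) l_{k-j}(A) = l_k A^* + \sum_{j=1}^{k-1} l_j A^* l_{k-j}(A) - \sum_{j=1}^{k-1} l_j l_{k-j}(A), \]
and then substitute the $A=\bm{1}$ recurrence $l_m = c_m + \sum_{i=1}^{m-1} l_i c_{m-i}$ for the factor $l_k$ and for each $l_j$ carrying a factor $A^*$ on its right. The terms that become purely of the form $c_\bullet A^*$ and $c_\bullet A^* l_\bullet(A)$ reassemble, by the first form of the general recurrence, into exactly $l_k(A)$. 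Hence the claim reduces to showing that the remaining bracket vanishes, i.e. that
\[ \sum_{j=1}^{k-1} l_j l_{k-j}(A) = \sum_{i=1}^{k-1} l_i c_{k-i} A^* + \sum_{j=1}^{k-1} \sum_{i=1}^{j-1} l_i c_{j-i} A^* l_{k-j}(A). \]

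To establish this last identity I would expand the factor $l_{k-j}(A)$ on the left-hand side once more, again by the first form of the recurrence,
\[ l_{k-j}(A) = c_{k-j} A^* + \sum_{m=1}^{k-j-1} c_m A^* l_{k-j-m}(A). \]
The first-order term $\sum_{j} l_j c_{k-j} A^*$ then matches $\sum_{i} l_i c_{k-i} A^*$ after relabelling, and what remains on each side is a double sum of triple products of the shape $l_\bullet\,(c_\bullet A^*)\,l_\bullet(A)$. Writing $p,q,r$ for the three subscripts, both double sums are seen to range over precisely the ordered triples with $p+q+r = k$ and $p,q,r \geq 1$. This is exactly the equality asserted by Lemma~\ref{lemming:combo} with $a_i = l_i$, $b_i = c_i A^*$, $c_i = l_i(A)$ and $n = k$, so the two sums coincide term by term up to reordering, completing the reduction.

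The main obstacle is the bookkeeping in this final step: one must apply the \emph{correct} form of each recurrence (the ``$c$ on the left'' form for $l_k(A)$ and for the innermost $l_{k-j}(A)$, and the ``$l$ on the left'' form for the $A=\bm{1}$ coefficients $l_k$ and $l_j$) so that the non-commuting factors line up consistently as $l_p\,(c_q A^*)\,l_r(A)$ on both sides, and then recognize the two index ranges as the two sides of Lemma~\ref{lemming:combo}. A direct check at $k=1$, where both sides collapse to $c_1 A^*$, confirms that the empty-sum conventions are consistent.
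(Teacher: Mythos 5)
Your proof is correct and follows essentially the same route as the paper's: both substitute the $A=\bm{1}$ case of Lemma \ref{lemming:idone} into the general recurrence, reduce the claim to the identity $\sum_{j=1}^{k-1} l_j l_{k-j}(A) = \sum_{i=1}^{k-1} l_i c_{k-i}A^* + \sum_{j=1}^{k-1}\sum_{i=1}^{j-1} l_i c_{j-i}A^* l_{k-j}(A)$, and dispose of it by one further expansion of $l_{k-j}(A)$ together with Lemma \ref{lemming:combo} applied to $a_i = l_i$, $b_i = c_i A^*$, $c_i = l_i(A)$. The only difference is directional bookkeeping: you expand the claimed right-hand side until it collapses to $l_k(A)$, whereas the paper expands $l_k(A)$ until it matches the right-hand side.
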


\begin{proof}
    For convenience, let $q_n := l_n (A)$ for the remainder of the proof.
By Lemma \ref{lemming:idone}, we have that \be q_n = c_n A^* + \sum _{i=1} ^{n-1} c_i A^* q_{n-i}
\ \ \mr{and} \ \ c_n = l_n - \sum _{i=1} ^{n-1} l_i c_{n-i}. \label{eq:recur} \ee Substituting the second equation into the first yields
\be q_n  =  l_n A^* + \sum _{i=1} ^{n-1} l_i A^* q_{n-i}  - \left( \sum _{i=1} ^{n-1} l_i c_{n-i} A^*
+ \sum _{i=1} ^{n-1} \sum _{j=1} ^{i-1}  l_j c_{i-j} A^* q _{n-i} \right) .\ee To prove the lemma, we need to show
that
\be \sum _{i=1} ^{n-1} l_i q_{n-i} = \sum _{i=1} ^{n-1} l_i c_{n-i} A^*  + \sum _{i=1} ^{n-1} \sum _{j=1} ^{i-1}
l_j c_{i-j} A^* q_{n-i} . \ee Substituting equation (\ref{eq:recur}) into the left hand side of this expression
gives: \be  \sum _{i=1} ^{n-1} l_i \left( c_{n-i} A^* + \sum _{j=1} ^{n-i-1} c_j A^* q_{n-i-j} \right) = \sum _{i=1} ^{n-1} \left( l_i c_{n-i} A^* + \sum _{j=1} ^{i-1} l_j c_{i-j} A^* q_{n-i} \right) .\ee Canceling like terms
and applying the identity from Lemma \ref{lemming:combo} proves the claim.
\end{proof}

\begin{proof}{ (Proposition \ref{prop:acmeasure})}

    Proposition \ref{prop:Elliott} shows that $l_j (A) = \int _{\bm{T}} \ov{z} ^j \Om _{\Theta _A} (dz)$ for
all $j \in \bm{N} $. Hence to prove this proposition, it suffices to show that
$d_k := V_n ^* P_- Z(A) ^{-k} P _- V_n = \int _{\bm{T}} \ov{z} ^j V_n ^*  P_- \La _A (dz) P_-  V_n = l_k(A) $ for all $k \in \bm{N}$. The fact that
$d_k = l_k (A)$ for $k \in -\bm{N}$ will follow from taking adjoints, and since $\Theta (0) = 0$, it follows that
$\Om _\Theta (\bm{T}) = \bm{1}$ so that $d_0 = \bm{1} _n = l_0 (A)$. Thus if we can prove that $d_k = l_k (A)$ for all
$k \in \bm{N}$, then all moments of the measures $V_n ^* P_- \La _A P_- V_n $ and $\Om _{\Theta _A}$ agree so that they must be equal.

This will be accomplished by proving that the
$d_k$ obey the same recurrence formula as the $l_k (A)$ given in the previous lemma. For simplicity identify the standard basis
$\{ e_i \}$ of $\bm{C} ^n$ with the basis $\{ b_i ^- \} $ of $\mf{D} _- \subset L^2 _\Theta$, and let $P:= P_-$ so that we can
write $V_n ^* P_- \left( Z(A) ^* \right) ^k P_- V_n $ as $ P \left( Z(A) ^* \right) ^k P$.
The calculation proceeds as follows \small
\ba \footnotesize  P \left( Z(A) ^* \right) ^k P & = & P \left( Z ^{-1} + Z^{-1} P (A^*-\bm{1} ) P \right) \left( Z^{-1} + Z ^{-1} P (A^* - \bm{1} ) P \right) ^{k-1} P
\nonumber \\
& = & P Z^{-1} P (A^*- \bm{1} ) P \left( Z^{-1} + Z^{-1} P (A^*-1) P \right) ^{k-1} P + P Z^{-1} \left( Z^{-1} + Z^{-1} P (A^*-1) P \right) ^{k-1} P
\nonumber \\
& = & l_1 (A^*-\bm{1}) d_{k-1} + P Z^{-1} \left( Z^{-1} + Z^{-1} P (A^*-1) P \right) \left( Z^{-1} + Z^{-1} P (A^*-1) P \right) ^{k-2} P \nonumber \\
&= & l_1 (A^* - \bm{1}) d_{k-1} + P Z^{-2} P (A^* - \bm{1} ) P ( Z^{-1} + Z^{-1} P (A^*  - \bm{1} ) P )^{k-2} P
\nonumber \\
& & +P Z^{-2} ( Z^{-1} +Z^{-1} P (A^* - \bm{1} ) P) ^{k-2} P \nonumber \\
& = & l_1 (A^*-\bm{1}) d_{k-1} + l_2 (A^*-\bm{1}) d_{k-2}  + ... + l_{k-1} (A^*- \bm{1} ) d_1 \nonumber \\ &  & + P Z ^{-(k-1)} \left( Z^{-1} + Z^{-1} P (A^*-1) P \right) P
\nonumber \\
& = & l_k A^*  + \sum _{j=1} ^{k-1} l_j (A^* -\bm{1}) d_{k-j} .\ea \normalsize This is the same formula as in Lemma \ref{lemming:formula}. We conclude that
$d_k = l_k (A)$, and hence that $\Om _{\Theta _A}  =\La _A$.
\end{proof}

\subsection{The Weyl integral formula and proof of the disintegration theorem}

\label{subsection:Weylint}

    Let $T$ be a contraction on a separable Hilbert space $\mc{H}$. The defect operators $D _T$, $D_{T^*}$ are
defined by $D_T := \sqrt{ 1 - T^*T}$, the defect subspaces $\mf{D} _T , \mf{D} _{T^*}$ by $\mf{D} _T := \ov{ \ran {D_T} }$
and the defect indices by $\mf{d} _T := \dim{\mf{D} _T }$. We say a contraction $T$ has defect indices $(n,n)$
if $\mf{d} _T = n = \mf{d} _{T^*}$. Let $P_T$ denote the projection onto $\mf{D} _T$. Then $T_0 := T - T P_T$ is a partial
isometry with kernel $\mf{D} _T$ and with range the orthogonal complement of $\mf{D} _{T^*}$.

The Nagy-Foias characteristic function of a contraction $T$ is defined as
\be \Theta _T (z) = \left( -T + z D _{T^*} (\bm{1} - z T^* ) ^{-1} D_T \right) | _{\mf{D} _T},\ee and is a contractive analytic function
with domain $\mf{D} _T$ and range $\mf{D} _{T^*}$. Two contractions $T$, $T'$ are unitarily equivalent if and only if their characteristic
functions coincide, \emph{i.e.} if and only if there are isometries $U, V$ such that $U \Theta _T = \Theta _{T'} V$. It is straightforward
to check that $T$ is a partial isometry if and only if $\Theta _T (0) = 0$. It follows that if $T$ is a contraction with defect indices
$(n,n)$, then $\Theta _{T_0} (0) = 0$. In Section \ref{subsection:model} we will show that given any partial isometry $V$ with defect indices $(n,n)$, that
$\Theta _V$ coincides with $\Theta _{Z _{\Theta _V (0)}}$ so that $V$ is unitarily equivalent to $Z _{\Theta _V} (0)$. It will follow that
any contraction $T$ with defect indices $(n,n)$ is unitarily equivalent to some extension of the partial isometry $Z _{\Theta _{T_0}} (0)$.

Given $T$ and $T_0$, let $\mf{D} _+ := \mf{D} _T$ and $\mf{D} _- := \mf{D} _{T^*}$, and let $\{ \psi ^+ _i \} _{i=1} ^n$,
$\{ \psi ^- _i \} $ be orthonormal bases for $\mf{D} _\pm$. Fix an isometry $W$ of $\mf{D} _+$ onto $\mf{D} _-$ by $W \psi ^+ _i
= \psi ^- _i$. Now define for any $U \in U(n)$, $T(U) := T _0 + W \hat{U}$, where $\hat{U} : \mf{D} _+ \rightarrow \mf{D} _+$
is the bijective isometry defined by
\be \left( \ip{ \cdot}{\psi _1 ^+} , ... , \ip{\cdot}{\psi _n ^+} \right) \left[ U _{ij} \right] \left( \begin{array}{c} \psi _1 ^+ \\ \vdots
\\ \psi _n ^+ \end{array} \right) = \sum _{i,j =1} ^n U_{ij} \ip{\cdot}{\psi _i ^+} \psi _j ^+ .\ee

If $T=Z _\Theta$, this notation agrees with that of the previous section if we choose $\psi ^+ _i = b _i ^+$ and $\psi ^- _i = b_i ^-$.

Now any $U \in \mc{U} (n)$ can be written as $U = V^* D V$ where $V \in \mc{U} (n)$ and $D \in \bm{T} ^n$, \emph{i.e.} $D = \mathrm{diag} ( z_1, ..., z_n )$ with
$z_i \in \bm{T}$. Hence $U_{ij} = \sum _k z_k \ov{V_{ki}} V_{kj}$ and we can write
\be \hat{U} = \sum _{ijk =1} ^n z_k \ov{V _{ki}} V_{kj} \ip{\cdot}{\psi ^+ _i} \psi _j ^+
=: z_1 R_1 + z_2 R_2 + ... + z_n R_n , \ee and \be T(U) = R_0 + z_1 R_1 +... z_n R_n.\ee Here $R_0 := T(0) = T_0$ and
for $i\geq 1$ the $R_i$ are all finite rank operators depending on $V$ and not on $D$, \emph{i.e.} the $R_n = R_n (V)$ are independent of the $z_i \in \bm{T}$.

Hence for any polynomial $p (z) = \sum _{k=0} ^j p_k z^k$, it follows that
\be p (T(U)) = p (T (V^* D V)) = \sum _{i_1 , ... , i_n =0} ^k z_1 ^{i_1} ... z_n ^{i_n} A_{i_1,..., i_n} (V) \label{eq:brkdwn},\ee
where the coefficient operators $A_{i_1,..., i_n} (V)$ depend only on $V$, and so are constant if $V$ is fixed.

Weyl's integration formula for $\mc{U} (n)$ (see \emph{e.g} \cite{Rossman}) states that if $H$ is Haar measure on $\mc{U} (n)$, $\bm{T} ^n$ denotes the subgroup of diagonal unitary
matrices, $G := \mc{U} (n) / \bm{T} ^n$, and $H_G$ Haar measure on $G$ then:

\begin{thm}{ (Weyl Integration Formula) }
    If $f$ is a continuous function on $\mc{U} (n)$, then,
    \be \int _{\mc{U} (n)} f(U) dH(U) = \frac{1}{n!} \int _G \left( \int _{\bm{T} ^n } f(VDV^*) \Delta (D) \ov{\Delta (D)} dD \right) d H_G (V \bm{T} ^n) .\ee
\end{thm}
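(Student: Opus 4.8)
The plan is to read the formula as the change-of-variables identity associated with the conjugation map and to reduce the whole statement to a single Jacobian computation. Define $\phi : G \times \bm{T} ^n \to \mc{U} (n)$ by $\phi (V \bm{T} ^n , D) = V D V^*$, where $D = \mr{diag} (z_1 , ..., z_n) \in \bm{T} ^n$. This descends to the quotient $G = \mc{U} (n) / \bm{T} ^n$ because $\bm{T} ^n$ is abelian and commutes with the diagonal $D$, so replacing $V$ by $VT$ with $T \in \bm{T} ^n$ leaves $VDV^*$ fixed. The spectral theorem for unitary matrices shows $\phi$ is surjective, and on the dense open set of matrices with $n$ distinct eigenvalues it is exactly $n!$-to-one: the unordered spectrum is determined by $U$, while the $n!$ orderings of the eigenvalues produce the $n!$ admissible diagonals $D$ (this is the action of the Weyl group $S_n$), each fixing $V$ uniquely modulo $\bm{T} ^n$. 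Since $G \times \bm{T} ^n$ and $\mc{U} (n)$ both have real dimension $n^2$, the identity will follow from the change-of-variables theorem once I compute the Jacobian of $\phi$; the factor $\frac{1}{n!}$ absorbs the generic multiplicity, and the locus of coincident eigenvalues (where $\Delta (D) = 0$) is Lebesgue-null and hence negligible.

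First I would compute the differential of $\phi$. By conjugation-invariance of Haar measure on $\mc{U} (n)$ together with the relation $\phi (V_0 V , D) = V_0 \, \phi (V,D) \, V_0 ^*$, the Jacobian at $(V, D)$ equals the Jacobian at $(\bm{1} , D)$ and so depends only on $D$; it therefore suffices to work at $(\bm{1}, D)$. Identify $T_{\bm{1}} G$ with the off-diagonal skew-Hermitian matrices $\mf{m}$ and $T_D \bm{T} ^n$ with the diagonal purely imaginary matrices $\mf{t}$, so that $\mf{u} (n) = \mf{t} \oplus \mf{m}$. Differentiating $\phi (e^{tX} , D e^{tH}) = e^{tX} D e^{tH} e^{-tX}$ at $t = 0$ gives $[X,D] + DH$; right-translating this back to $\mf{u} (n)$ by $D^{-1} = D^*$ yields
\be X - D X D^* + H . \ee
The $(i,j)$ entry of $X - DXD^*$ equals $(1 - z_i \ov{z_j}) X_{ij}$, which vanishes on the diagonal since $|z_i| = 1$ — confirming that $\mf{t}$ is exactly the redundant direction — while $H \in \mf{t}$ maps to itself. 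Thus $d\phi$ is the identity on $\mf{t}$ and acts as multiplication by $1 - z_i \ov{z_j}$ on the off-diagonal entry $X_{ij}$.

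The Jacobian is then the product of these factors over the off-diagonal entries. Pairing $X_{ij}$ with $X_{ji} = - \ov{X_{ij}}$ for $i < j$, each complex coordinate $X_{ij}$ is scaled by $1 - z_i \ov{z_j}$, contributing $|1 - z_i \ov{z_j}| ^2$ to the real Jacobian, so that the Jacobian equals $\prod _{i<j} |1 - z_i \ov{z_j}| ^2$. The final step is the elementary identity $1 - z_i \ov{z_j} = \ov{z_j} (z_j - z_i)$, valid because $|z_j| = 1$, which gives $|1 - z_i \ov{z_j}| = |z_i - z_j|$ and hence
\be \prod _{i<j} |1 - z_i \ov{z_j}| ^2 = \prod _{i<j} |z_i - z_j| ^2 = \Delta (D) \ov{\Delta (D)} , \ee
where $\Delta (D) = \prod _{i<j} (z_i - z_j)$ is the Vandermonde determinant. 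Substituting this Jacobian into the change-of-variables formula and dividing by $n!$ produces the stated identity.

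The main obstacle is precisely this Jacobian computation — correctly differentiating the conjugation map and extracting its determinant as $\Delta (D) \ov{\Delta (D)}$ — together with the bookkeeping of the three normalized Haar measures. The normalization is confirmed by the check $f \equiv 1$: the right-hand side collapses to $\frac{1}{n!} \int _{\bm{T} ^n} |\Delta (D)| ^2 \, dD$, and the classical evaluation $\int _{\bm{T} ^n} |\Delta (D)| ^2 \, dD = n!$ (only the diagonal terms of $|\Delta| ^2 = \sum _{\sigma , \tau} \mr{sgn} (\sigma) \mr{sgn} (\tau) \prod _i z_i ^{\sigma (i) - \tau (i)}$ survive integration against normalized Lebesgue measure on $\bm{T} ^n$) shows that no extra constant is needed and that the normalizations are compatible.
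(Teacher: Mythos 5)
The paper does not actually prove this theorem: it is quoted as a classical fact with a pointer to the literature (Rossman's book on Lie groups), and the paper's own work begins only afterwards, in the proposition where the formula is applied to the perturbations $T(U)$. So there is no in-paper proof to compare against; what you have supplied is the standard textbook proof via the conjugation map, and it is essentially correct. Your reduction of the Jacobian to the point $(\bm{1},D)$ by equivariance and invariance of the measures, the computation that $d\phi(X,H) = X - DXD^* + H$ after right translation by $D^*$, the resulting factor $1 - z_i \ov{z_j}$ on the off-diagonal coordinate $X_{ij}$, the identity $|1 - z_i\ov{z_j}| = |z_i - z_j|$ giving the Jacobian $\prod_{i<j}|z_i - z_j|^2 = \Delta(D)\ov{\Delta(D)}$, and the count that $\phi$ is $n!$-to-one over the co-null set of unitaries with simple spectrum are all correct. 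One point should be made explicit rather than implicit: the Jacobian you compute is taken with respect to translation-invariant volume forms coming from a fixed identification of the tangent spaces with $\mf{t} \oplus \mf{m}$, so a priori the covering-space change-of-variables argument yields the stated identity only up to a positive constant $c$ reflecting the ratio of those volume forms to the three normalized probability measures $H$, $H_G$, $dD$. Your final check with $f \equiv 1$, using $\int_{\bm{T}^n} |\Delta(D)|^2\, dD = n!$, is precisely what forces $c = 1/n!$; it is better presented as the step that pins down the constant than as a mere consistency check. With that reading, the argument is complete, and it has the merit of making the paper self-contained on a point it otherwise outsources to a reference.
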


    In the above if $D = \mr{diag} (z_1, ... z_n)$ then $dD := dz_1 ... dz_n$, and $\Delta (D) := \prod _{j<k} (z_j - z_k)$.
The following fact is a straightforward consequence of Weyl's integration formula

\begin{prop}
    If $T$ is a completely non-unitary contraction with defect indices $(n,n)$, and $f = \ov{h} +g$ for $h,g \in H ^\infty (\bm{T})$,
then
\be \int _{\mc{U} (n)} f(T(U)) dH(U) = f(T(0)). \ee \label{prop:yayweyl}
\end{prop}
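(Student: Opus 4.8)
The plan is to reduce the statement, by linearity, to the monomials $\zeta \mapsto \zeta^k$ and $\zeta \mapsto \ov{\zeta}^k$, $k \geq 0$, and to extract these from the polynomial expansion (\ref{eq:brkdwn}) by a single application of Weyl's integration formula. Since each $T(U)$ is unitary, $\ov{\zeta}^k$ is sent to $T(U)^{-k} = T(U)^{*k}$, so for $f = \ov{h} + g$ with $h,g$ polynomials one has $\int _{\mc{U}(n)} f(T(U)) \, dH = \int _{\mc{U}(n)} g(T(U))\, dH + \big( \int _{\mc{U}(n)} h(T(U)) \, dH \big)^*$. Interpreting $f(T(0)) = g(T(0)) + h(T(0))^*$ through the Sz.-Nagy--Foias functional calculus of the completely non-unitary contraction $T(0) = T_0$, the whole identity reduces to the single analytic statement $\int _{\mc{U}(n)} p(T(U)) \, dH(U) = p(T(0))$ for polynomials $p$, the coanalytic part then following by taking adjoints.

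The core is the polynomial case. Fix $p$ and use (\ref{eq:brkdwn}) to write $p(T(V^*DV)) = \sum _{i_1, \dots, i_n \geq 0} z_1 ^{i_1} \cdots z_n ^{i_n} A_{i_1, \dots, i_n}(V)$, a polynomial in the $z_i$ with only nonnegative exponents and with coefficients depending only on $V$. Weyl's formula then gives $\int _{\mc{U}(n)} p(T(U)) \, dH(U) = \frac{1}{n!} \int _G \big( \sum _{\vec{i}} A_{\vec{i}}(V) \int _{\bm{T}^n} z_1 ^{i_1} \cdots z_n ^{i_n} \Delta(D) \ov{\Delta(D)} \, dD \big) \, dH_G$. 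To evaluate the inner integral I expand $\Delta(D) = \sum _\sigma \mr{sgn}(\sigma) \prod _l z_l ^{\sigma(l)-1}$, so that on $\bm{T}^n$ one has $\Delta(D)\ov{\Delta(D)} = \sum _{\sigma, \tau} \mr{sgn}(\sigma)\mr{sgn}(\tau) \prod _l z_l ^{\sigma(l) - \tau(l)}$; integrating term by term against $dD$ forces $\tau(l) = \sigma(l) + i_l$ for every $l$, and summing over $l$ gives $\sum _l i_l = 0$. As the $i_l$ are nonnegative this occurs only for $\vec{i} = 0$ (with $\tau = \sigma$), whence $\int _{\bm{T}^n} z_1 ^{i_1} \cdots z_n ^{i_n} \Delta(D)\ov{\Delta(D)} \, dD = n!\, \delta _{\vec{i}, 0}$. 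Only the constant term $A_{0,\dots,0}(V)$ survives, and setting $D = 0$ in the expansion identifies it as $p(T(0))$, independent of $V$. Since $H_G$ is a probability measure (take $f \equiv 1$ in Weyl's formula), I conclude $\int _{\mc{U}(n)} p(T(U)) \, dH(U) = p(T(0))$, and hence the identity for every trigonometric polynomial.

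It remains to pass from polynomials to $H^\infty$. I would use the dilations $g_r(\zeta) := g(r\zeta)$, $0 < r < 1$: each $g_r$ is a uniform limit on $\bm{T}$ of its Taylor polynomials, and since the $T(U)$ are unitary one has $\| g_r(T(U)) - p(T(U)) \| \leq \| g_r - p \|_{C(\bm{T})}$ uniformly in $U$, so the polynomial case yields $\int _{\mc{U}(n)} g_r(T(U)) \, dH(U) = g_r(T(0))$ for every $r$. As $r \to 1^-$ the right-hand side converges in the strong operator topology to $g(T(0))$ by the $H^\infty$ calculus of the completely non-unitary contraction $T_0 = T(0)$ (recall $Z_\Theta(A)$ is completely non-unitary whenever $\| A \| < 1$, in particular for $A = 0$). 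The genuinely delicate point is the limit on the left. Writing $\ip{\int _{\mc{U}(n)} g_r(T(U)) \, dH \, x}{y} = \int _{\bm{T}} g_r \, d\nu_{x,y}$, where $\nu_{x,y} := \int _{\mc{U}(n)} \ip{\chi_{(\cdot)}(T(U)) x}{y} \, dH(U)$ is the $\mc{U}(n)$-averaged spectral measure, convergence to $\int _{\bm{T}} g \, d\nu_{x,y}$ needs $g_r \to g$ off a $\nu_{x,y}$-null set; as $g_r \to g$ only Lebesgue-a.e., this requires $\nu_{x,y}$ to be absolutely continuous with respect to Lebesgue measure. This absolute continuity — the averaging-out of the singular parts of the individual spectral measures of the $T(U)$, the very phenomenon made quantitative by the disintegration theorem — is the main obstacle; once it is secured, dominated convergence matches the left-hand limit with $\int _{\bm{T}} g \, d\nu_{x,y}$ and completes the argument.
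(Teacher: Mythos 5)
Your core argument --- the trigonometric--polynomial case --- is correct and is essentially the paper's own proof: expand $p(T(V^*DV))$ via (\ref{eq:brkdwn}), apply Weyl's formula, and kill every monomial $z_1^{i_1}\cdots z_n^{i_n}$ with some $i_l\geq 1$ by integrating against $|\Delta (D)|^2$ over $\bm{T}^n$, so that only the constant coefficient $A_{0,\dots ,0}(V)=p(T(0))$ survives. Your explicit evaluation $\int _{\bm{T}^n} z_1^{i_1}\cdots z_n^{i_n}\Delta (D)\ov{\Delta (D)}\,dD = n!\,\delta _{\vec{i},0}$ by the Vandermonde expansion is a sharper form of the paper's degree-counting argument, and the reduction of $f=\ov{h}+g$ to the analytic part by taking adjoints is also how the paper proceeds.

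The gap is the one you name yourself and then leave open: the passage from polynomials to $H^\infty$. You correctly isolate that the left-hand limit requires the averaged measure $\nu _{x,y}:=\int _{\mc{U}(n)}\ip{\chi _{(\cdot )}(T(U))x}{y}\,dH(U)$ to be absolutely continuous with respect to Lebesgue measure (both so that $g_r\to g$ holds $\nu _{x,y}$-a.e., and indeed so that $g(T(U))$ is unambiguously defined for Haar-a.e.\ $U$, since individual $T(U)$ may have singular spectrum), but you stop at calling this ``the main obstacle.'' Be aware that you cannot appeal to the disintegration theorem to secure it: Theorem \ref{thm:disintegrate} is deduced from Proposition \ref{prop:yayweyl}, so that route is circular. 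The clean fix uses only what you have already proven plus dilation theory. By your polynomial case, the Fourier coefficients of $\nu _{x,y}$ are $\ip{T(0)^k x}{y}$ for $k\geq 0$ and $\ip{(T(0)^*)^k x}{y}$ for $k\leq 0$; these are exactly the Fourier coefficients of $\ip{E(\cdot )x}{y}$, where $E$ is the spectral measure of the minimal unitary dilation of $T(0)$, so the two measures coincide. Moreover $T(0)$ is completely non-unitary whenever $T$ is: a subspace on which the partial isometry $T(0)=T-TP_T$ acts unitarily must be orthogonal to $\mf{D}_T$, so $T$ agrees with $T(0)$ there and restricts to a unitary on an invariant (hence reducing) subspace, contradicting that $T$ is c.n.u. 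By the Sz.-Nagy--Foias theorem, the minimal unitary dilation of a completely non-unitary contraction has absolutely continuous spectral measure; hence $\nu _{x,y}$ is absolutely continuous, and your dominated-convergence step closes the argument. This is presumably what the paper's one-sentence appeal to ``limits with the aid of the $H^\infty$ functional calculus'' is suppressing; you identified the right issue but did not finish it.
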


    Here if $h \in H^\infty $, and $T$ is a completely non-unitary contraction, then $\ov{h} (T)$ is defined as $h ^* (T^*)$, where
$h^* (z) = \ov{h(\ov{z})} \in H^\infty$.

\begin{proof}
    It suffices to establish the formula in the case where $f=p$ is a polynomial. The more general formula follows by taking
adjoints, and limits with the aid of the $H^\infty$ functional calculus for completely non unitary contractions (see \emph{e.g.}
\cite{Foias}). For fixed
$V \in \mc{U} (n)$, equation (\ref{eq:brkdwn}) implies that
\be p (T(U)) = p (T(0)) + \sum  ' z_1 ^{i_1} ... z_n ^{i_n} A_{i_1,..., i_n} (V), \ee where the prime denotes that
the sum is taken over all values of the $i_1, ... i_n$ where at least one of the $i_j ; 1 \leq j \leq n $ is non-zero.

By Weyl's integration formula, \be \int _{\mc{U} (n)} p(T(U)) dH(U)
= p(T(0)) + \sum ' A_{i_1, ... i_n} \int _G \left( \int _{\bm{T}} ... \int _{\bm{T}} z_1 ^{i_1}...z_n ^{i_n} \left|  \prod _{j<k} (z_j -z_k) \right| ^2
dz_1 ... dz_n \right) dH_G (V\bm{T} ^n ) .\ee Hence, to prove the proposition, it suffices to show that provided at least one of the
$i_1, ... , i_n$ is non-zero, that
\be 0= \int _{\bm{T}} ... \int _{\bm{T}} z_1 ^{i_1}...z_n ^{i_n} \left|  \prod _{j<k} (z_j -z_k) \right| ^2
dz_1 ... dz_n . \label{eq:vanish} \ee This is straightforward to show. If one expands out  $\left|  \prod _{j<k} (z_j -z_k) \right| ^2$ one
obtains a sum of terms of the form $z_1 ^{j_1} ... z_n ^{j_n}$ where $j_1 + j_2 +... + j_n =0$. It follows that the product
$ z_1 ^{i_1}...z_n ^{i_n} \left|  \prod _{j<k} (z_j -z_k) \right| ^2 $is the sum of terms of the form $z_1 ^{k_1} ... z_n ^{k_n}$
where $k_1 + ... + k_n \geq 1$, and therefore there is at least one $k_i \geq 1$ in each such term.
But then it is clear that the above integral in (\ref{eq:vanish}) vanishes, since if $k_l \geq 1$, then
\be \int _{\bm{T}} ... \int _{\bm{T}} z_1 ^{k_1}...z_l ^{k_l}...z_n ^{k_n} dz_1 ... dz_n  = 0 .\ee
\end{proof}

There is one final observation to make before presenting the matrix-valued disintegration theorem. If $\Om _{\Theta _A}$ are the
Aleksandrov-Clark measures discussed in the previous section, recall that,

\be B_A (z) = \frac{\bm{1} -\Theta (z) A^*}{\bm{1} - \Theta (z) A^*} = \int _\bm{T} \frac{\zeta +z}{\zeta -z} \Om _{\Theta _A} (d\zeta ).\ee

Taking $A=0$ shows that $\bm{1} _{\bm{M} _n } = \int _\bm{T} \frac{\zeta +z}{\zeta -z} d\Om _0 (\zeta ).$ Letting
$m$ denote the diagonal positive matrix valued measure given by $n$ copies of Lebesgue measure on the diagonal, then
\be \int _{\bm{T}} \frac{\zeta +z}{\zeta -z} m (d\zeta) = \sum _{k=0} ^\infty \int _{\bm{T}} \left( (\ov{\zeta} z) ^k
+ (\ov{\zeta} z)  ^{k+1}  \right) m (d\zeta) = \bm{1} _n .\ee By the uniqueness of the representing measure in the matrix-valued
Herglotz theorem \cite[Theorem 3]{Elliott}, it follows that $\Om _0 = m$.

\begin{thm}
    Let $\Theta$ be a $\bm{M} _n$-valued contractive analytic function on $\bm{D}$, and $\Om _{\Theta _A}$ the AC measures
associated with $\Theta$ for any $A \in \ov{(\bm{M} _n) _1}$. Then $\Om _0 = m$ and for any continuous function $f$ on $\bm{T}$,

\be \int _{\mc{U} (n)} \int _{\bm{T}} f(\zeta ) \Om _{\Theta _U} (d\zeta ) dH (U) = \int _\bm{T} f(\zeta) m (d\zeta). \ee

\label{thm:disintegrate}

\end{thm}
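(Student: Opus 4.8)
The plan is to assemble the theorem from the two propositions already in hand. The equality $\Om _0 = m$ has just been established from the uniqueness clause of the matrix-valued Herglotz theorem, so only the integral identity remains. I would first reduce that identity to the special functions $f = \ov{h} + g$ with $h,g \in H ^\infty (\bm{T})$, since these are precisely the functions to which Proposition \ref{prop:yayweyl} applies. Every trigonometric monomial $\zeta ^k$ is of this form ($\zeta ^k = g$ with $g(z) = z^k$ for $k \geq 0$, and $\zeta ^k = \ov{h}$ with $h(z) = z^{|k|}$ for $k<0$, using $\ov{\zeta} = \zeta ^{-1}$ on $\bm{T}$), hence so is every trigonometric polynomial. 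Since $\Om _{\Theta _U} (\bm{T}) = \bm{1} _n$ for every $U$ (a consequence of $\Theta (0) = 0$), both sides of the asserted identity depend continuously on $f$ in the uniform norm, with total masses $\bm{1} _n$; as trigonometric polynomials are dense in $C(\bm{T})$, it therefore suffices to prove the formula for $f = \ov{h}+g$.

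For such an $f$ and a fixed unitary $U$, I would recast the inner integral operator-theoretically. Because $Z_\Theta (U)$ is unitary, $f(Z_\Theta (U))$ is defined by the Borel functional calculus with spectral measure $\La _U (I) = \chi _I (Z_\Theta (U))$, and Proposition \ref{prop:acmeasure}, in the form $V_n ^* P_- \La _U P_- V_n = \Om _{\Theta _U}$, gives $\int _{\bm{T}} f \, \Om _{\Theta _U} = V_n ^* P_- f(Z_\Theta (U)) P_- V_n$. Integrating over $\mc{U} (n)$ and moving the bounded linear compression $V_n ^* P_- (\cdot) P_- V_n$ through the Haar integral yields $\int _{\mc{U} (n)} \int _{\bm{T}} f \, \Om _{\Theta _U} \, dH(U) = V_n ^* P_- \left( \int _{\mc{U} (n)} f(Z_\Theta (U)) \, dH(U) \right) P_- V_n$. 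Now the operator $T := Z_\Theta (0)$ is a completely non-unitary contraction with defect indices $(n,n)$ (as $0$ lies in the open unit ball), its defect spaces are $\mf{D} _+$ and $\mf{D} _-$, and $T(U) = Z_\Theta (U)$; thus Proposition \ref{prop:yayweyl} applies and collapses the Haar average to $f(Z_\Theta (0))$. Compressing back and invoking Proposition \ref{prop:acmeasure} once more, now for $A=0$ (so that $V_n ^* P_- f(Z_\Theta (0)) P_- V_n = \int _{\bm{T}} f \, \Om _0$), together with $\Om _0 = m$, produces $\int _{\bm{T}} f \, dm$. This is the desired identity on the dense subclass.

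I expect the work to be bookkeeping rather than conceptual, and I would flag three delicate points. First, Proposition \ref{prop:acmeasure} is stated at the level of moments, so it must be upgraded to the functional-calculus identity $\int f \, \Om _{\Theta _U} = V_n ^* P_- f(Z_\Theta (U)) P_- V_n$ for continuous $f$; this follows because both sides agree on every monomial $\zeta ^k$ and extend by linearity and uniform limits. Second, the interchange of the Haar integral with the compression and with the operator functional calculus must be justified, which is routine since $H$ is a probability measure, the $Z_\Theta (U)$ are uniformly bounded, and $U \mapsto f(Z_\Theta (U))$ is the continuous integrand already used in Proposition \ref{prop:yayweyl}. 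Third is the closing uniform-approximation step from $f = \ov{h} + g$ to arbitrary continuous $f$. The single substantive ingredient is Proposition \ref{prop:yayweyl}, whose proof already encodes the vanishing of all non-constant Haar moments via Weyl's formula; the present argument merely sandwiches that identity between the two compressions supplied by Proposition \ref{prop:acmeasure}.
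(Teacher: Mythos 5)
Your proposal is correct and follows essentially the same route as the paper: establish $\Om_0 = m$ from Herglotz uniqueness, sandwich Proposition \ref{prop:yayweyl} between the two compressions supplied by Proposition \ref{prop:acmeasure} (once for unitary $U$, once for $A=0$), and finish by uniform approximation of continuous $f$ by functions of the form $\ov{q}+p$. The only differences are organizational — you perform the density reduction first rather than last, and you spell out the moment-to-functional-calculus upgrade and the interchange of the Haar integral with the compression, which the paper treats implicitly.
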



\begin{proof}
    Recall that if $A \in \ov{ (\bm{M} _n ) _1} $, $\La _A$ denotes the positive operator valued measure obtained as the
compression of the projection valued measure of the unitary dilation of $Z(A)$ to $L^2 _\Theta (\bm{T})$. In the case
where $A=U$ is unitary, $\La _U$ is the projection-valued measure obtained from $Z _\Theta (U)$.

Now by the previous proposition, Proposition \ref{prop:yayweyl}, if $f = \ov{q} + p$ where $p, q$ are polynomials, then
\be \int _{\mc{U} (n)} \int _{\bm{T}} f(\zeta )  \La _U (d\zeta ) dH (U) =  \int _{\mc{U} (n)} f(Z(U)) dH(U) = q^* (Z(0) ^*)  + p(Z(0)).\ee

By Proposition \ref{prop:acmeasure}, if we again identify $\bm{C} ^n$ with $\mf{D} _- \subset L^2 _\Theta$ and let $P := P_-$,
the projector of $L^2 _\Theta$ onto $\mf{D} _-$, then $P  \La_A P  = \Om _{\Theta _A}$, so that

\ba \int _{\mc{U} (n)} \int _{\bm{T}} f(\zeta ) \Om _U (d\zeta ) dH (U) &  = &  \int _{\mc{U} (n)}  \int _{\bm{T}} f(\zeta ) P \La _U  (d\zeta ) P dH (U) \nonumber
\\ & = & P  \int _{\mc{U} (n)} \int _{\bm{T}} f(\zeta ) \La _U (d\zeta ) dH (U) P \nonumber \\
& = & P \int _{\mc{U} (n)} f ( Z_\Theta (U) ) dH (U) P \nonumber \\
& = &  P ( q^* (Z(0)^*)  + p(Z(0)) ) P
\nonumber \\ & = &  \int _\bm{T} f(\zeta ) \Om _0 (d\zeta) \nonumber \\
& = &   \int _\bm{T} f(\zeta) m (d\zeta). \ea For $f$ an arbitrary continuous function, the statement follows by approximating
$f$ by functions $f_n$ of the form $f_n = \ov{q_n} + p_n$ since such functions are dense in the Banach space of continuous functions
on $\bm{T}$.
\end{proof}

\section{The Cauchy integral representation of $K^2 _\Theta$}

\label{section:cauchyint}

    Let $H^2 _\Theta$ be the closure of the polynomials in $L^2 _\Theta$, \emph{i.e.} the closed subspace of $L^2 _\Theta$ generated
by $Z_\Theta $ and $\mf{D} _-$, the constant functions. In this section we construct an isometry $V_\Theta : H^2 _\Theta \rightarrow K^2 _\Theta$,
where $K^2 _\Theta $ is the de Branges-Rovnyak space associated with $\Theta$ and show that the image of $Z_\Theta ^*$ under this
transformation is a rank-$n$ perturbation of $X_\Theta$, the restriction of the backwards shift from $H^2 _n (\bm{D} )$ to $K^2 _\Theta$. In this section
we do not assume that $\Theta (0) = 0$ in general.

\subsection{de Branges-Rovnyak spaces}

    Let $L^2 _n (\bm{T})$ denote the Hilbert space of $\bm{C} ^n$-valued functions which are square integrable
with respect to normalized matrix-valued Lebesgue measure $m$ on $\bm{T}$, and recall that $H^2 _n (\bm{D} ) \subset
L^2 _n (\bm{T})$ is the subspace of $\bm{C} ^n$ valued functions which are analytic in $\bm{D}$ and whose $L^2$ norm on circles of
radii $r<1$ remains bounded as $r \rightarrow 1$.

    Given $\Theta \in \left( H^\infty _{\bm{M} _n } (\bm{D} ) \right) _1$, the de Branges-Rovnyak space $K^2 _\Theta$ is
defined as follows. Let $P_{H^2}$ denote the projection of $L^2 _n (\bm{T})$ onto $H^2 _n (\bm{D})$, and let $T _\Theta$ denote
the operator of multiplication by $\Theta $ on $H^2 _n (\bm{D})$, $T_\Theta f = \Theta f$ for all $f \in H^2 _n (\bm{D})$. The
de Branges-Rovnyak space $K^2 _\Theta $ is defined as the range of $R_\Theta := \sqrt{\bm{1} - T_\Theta T_\Theta ^* } $
endowed with the inner product that makes $R_\Theta$ a co-isometry of $H^2 _n (\bm{D})$ onto its range.  Hence if $f,g \in H^2 _n (\bm{D})$ and at
least one of $f,g$ is orthogonal to the kernel of $R_\Theta$, then $\ip{R_\Theta f}{R_\Theta g} _\Theta = \ip{f}{g}$, see \cite{Sarason-dB} for more details.
We will denote the inner product in $K^2 _\Theta$ by $\ip{\cdot}{\cdot} _\Theta$ to distinguish it from the inner product of $H^2 _\Theta$
which is denoted by $( \cdot , \cdot ) _\Theta $. For $z, w \in \bm{D}$, let
\be \Delta _w (z) := \frac{\bm{1} - \Theta (z) \Theta (w) ^*} {1 - z \ov{w}} \label{eq:opkern} ,\ee be the matrix kernel function at $w$. The Hilbert space
$K^2 _\Theta$ is the closed linear span of the point evaluation functions \be \delta_z ^{\vec{x}} := \Delta _z \vec{x}, \ee for $\vec{x} \in \bm{C} ^n$ and
$z \in \bm{D}$. The notation $\delta_z ^j := \delta_z ^{e_j}$ where $\{ e_j \}$ as before is an ON basis of $\bm{C}^n$ will sometimes be used. Inner products with $\delta_z ^{\vec{x}}$ gives point
evaluations at $z \in \bm{D}$:
\be \ip{f}{\delta_z ^{\vec{x}}} _\Theta = ( f (z) , \vec{x} ) _{\bm{C} ^n}, \ee for any $f \in K^2 _\Theta$.

    We will now discuss the Cauchy integral representation for vector-valued de Branges-Rovynak spaces $K^2 _\Theta$. This
will be a straightforward generalization of the methods of \cite[Chapter III]{Sarason-dB}. Since most of the arguments
generalize with only trivial modifications, many of the results will be stated without proof.

\subsection{The Cauchy integral representation of $K^2 _\Theta$}

Recall that $\Om _\Theta $ is the unique positive $\bm{M} _n$-valued measure on $\bm{T}$ associated with the purely contractive
$\Theta$ by the Herglotz theorem.

One defines the Cauchy integral of $\Om _\Theta$ by \be C \Om _\Theta (z) := \int _\bm{T} \frac{1}{1-\ov{\zeta} z} \Om _\Theta (d\zeta) . \ee
This is clearly a $\bm{M} _n$-valued function which is analytic in $\bm{D}$.
Next for any $f \in L^2 _\Theta (\bm{T})$ define the Cauchy integral of $f$ by
\be C _\Theta f (z) := \int _\bm{T} \frac{1}{1-\ov{\zeta} z} \Om _\Theta (d\zeta) f (\zeta) .\ee For each such $f$ this is
an analytic $\bm{C} ^n$-valued function on $\bm{D}$.

By definition $C_\Theta f (z) = \left( f , k _z \right) _\Theta$ where
\be k_z (\zeta) := (1 -\ov{z} \zeta) ^{-1}, \ee $k_z \in L^2 _\Theta$ for $z \in \bm{D}$.  Hence the kernel of the map $C_\Theta $ is the orthogonal complement of the span of the kernel functions $k_z$, $z \in \bm{D}$ in $L^2 _\Theta$. The closed linear span of the kernel functions $k_z ^{\vec{x}}
:= k_z \vec{x} $ where $\vec{x} \in \bm{C} ^n$
is easily seen to be the span of the polynomials in $L^2 _\Theta$ which we defined previously to be $H^2 _\Theta$.

The following Lemma is easy to verify and its proof is omitted:

\begin{lemming}
The following identity holds:
\be \int _{\bm{T} } \frac{1}{1-\ov{a} z} \frac{1}{1-b\ov{z}} \Om _\Theta (dz) = (\bm{1} - \Theta (b) ) ^{-1}
\Delta _a (b) (\bm{1} - \Theta (a) ^* ) ^{-1}. \ee \label{lemming:id2}
\end{lemming}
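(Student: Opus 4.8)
The identity to be established is
\[
\int _{\bm{T}} \frac{1}{1-\ov{a}z}\,\frac{1}{1-b\ov{z}}\,\Om _\Theta (dz)
= (\bm{1} - \Theta (b))^{-1}\,\Delta _a (b)\,(\bm{1} - \Theta (a)^*)^{-1},
\qquad a,b \in \bm{D}.
\]

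The plan is to reduce everything to the defining Herglotz representation of $B_\Theta = B_{\Theta _{\bm{1}}}$ and then manipulate the resulting Poisson-type kernels into the reproducing-kernel form on the right. First I would recall from equation (\ref{eq:glotz}) (with $A = \bm{1}_n$, so $\Theta _A = \Theta$) that $B_\Theta (z) = \int _{\bm{T}} \frac{\zeta + z}{\zeta - z}\,\Om _\Theta (d\zeta) + i\,\im{B_\Theta (0)}$, together with the explicit formula $\re{B_\Theta (z)} = (1 - \Theta (z))^{-1}(1 - \Theta (z)\Theta (z)^*)(1 - \Theta (z)^*)^{-1}$. The key algebraic observation is that the integrand on the left factors through the Cauchy kernel $k_z(\zeta) = (1 - \ov{z}\zeta)^{-1}$: writing $\frac{1}{1-\ov a z}\cdot\frac{1}{1-b\ov z}$ and comparing with the product $\ov{k_a(\zeta)}\,k_b(\zeta)$ shows that the integral is, up to the measure $\Om _\Theta$, a sesquilinear pairing of the two kernel functions $k_a, k_b \in L^2 _\Theta$.

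The main computational step is to expand both sides as power series in $a$ and $\ov b$ (or in $\ov a$ and $b$) and match coefficients, which reduces the whole identity to the moment data $\int _{\bm{T}} \zeta^{\,j}\,\Om _\Theta (d\zeta)$ already computed in Proposition \ref{prop:Elliott}. Concretely, I would expand $\frac{1}{1-\ov a z} = \sum _{m\geq 0} \ov a^{\,m} z^m$ and $\frac{1}{1-b\ov z} = \sum _{k\geq 0} b^k \ov z^{\,k}$, so that the left-hand side becomes $\sum _{m,k\geq 0} \ov a^{\,m} b^k \int _{\bm{T}} z^{m-k}\,\Om _\Theta (dz)$, and feed in the closed forms for the moments from Proposition \ref{prop:Elliott} (with $A = \bm{1}$). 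In parallel I would expand the right-hand side: $(\bm{1} - \Theta (b))^{-1} = \sum _{p\geq 0}\Theta (b)^p$, likewise for $(\bm{1} - \Theta (a)^*)^{-1}$, and expand $\Delta _a (b) = \frac{\bm{1} - \Theta (b)\Theta (a)^*}{1 - b\ov a}$ as a geometric series in $b\ov a$. The task is then to check that the two triple sums agree term by term; this is where the non-commutativity of the matrix coefficients $c_k$ of $\Theta$ enters, and where the recurrence relations already extracted in Lemmas \ref{lemming:idone}--\ref{lemming:formula} (which encode exactly how products of $\Theta$-power-series coefficients reorganize) do the bookkeeping.

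The hard part will be controlling the non-commutative rearrangements in matching the coefficients, since $\Theta (a)^*$ sits on the right while the moments mix positive and negative powers of $z$; the combinatorial regrouping is precisely the content of Lemma \ref{lemming:combo}, and the cleanest route is to verify that both sides satisfy the same Herglotz/reproducing-kernel functional equation rather than to brute-force every coefficient. An alternative, and probably shorter, approach that avoids the series bookkeeping entirely is to recognize the left-hand side as $\ip{k_b^{\,\cdot}}{k_a^{\,\cdot}}_\Theta$ in the guise of $\int _{\bm{T}} \ov{k_a}\,k_b\,\Om _\Theta$, and to apply Lemma \ref{lemming:id2}'s companion reproducing-kernel identity for $K^2 _\Theta$: since $C_\Theta$ maps $L^2 _\Theta$ to functions on $\bm{D}$ with $C_\Theta f(z) = (f, k_z)_\Theta$, the integral computes $C_\Theta k_b$ evaluated against $k_a$, and the factor $\Delta _a(b)$ is forced by the reproducing property $\ip{f}{\delta^{\vec x}_z}_\Theta = (f(z), \vec x)$ stated in Section \ref{section:cauchyint}. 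I expect the series-matching to be routine once the moment formula from Proposition \ref{prop:Elliott} is substituted, and I would present the reproducing-kernel argument as the conceptual backbone with the coefficient computation relegated to a direct verification.
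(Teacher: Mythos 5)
The paper gives no proof of this lemma (it is dismissed as ``easy to verify''), so the benchmark is the argument it clearly intends: the matrix version of the computation in \cite[Chapter III]{Sarason-dB}, run directly off the Herglotz representation (\ref{eq:glotz}). Measured against that, your main route has a genuine gap of generality. You propose to expand the left side as $\sum_{m,k} \ov{a}^{\,m} b^k \int_{\bm{T}} z^{m-k}\,\Om_\Theta(dz)$ and substitute the moment formulas of Proposition \ref{prop:Elliott}, with Lemmas \ref{lemming:idone}--\ref{lemming:formula} and \ref{lemming:combo} handling the bookkeeping. But Proposition \ref{prop:Elliott} and all of those lemmas are proved under the standing hypothesis $\Theta(0)=0$ of Section \ref{section:ACdis} (indeed, Elliott's formula gives $\int_{\bm{T}}\Om_{\Theta}(d\zeta)=\bm{1}$, which is false when $\Theta(0)\neq 0$, and the proof of Lemma \ref{lemming:idone} explicitly uses $c_0=0$). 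Section \ref{section:cauchyint}, where Lemma \ref{lemming:id2} lives, explicitly drops that hypothesis, and the lemma is needed in the general case: in the proof of Proposition \ref{prop:twine} it is invoked at $a=b=0$ to give $\int_{\bm{T}}\Om_\Theta(dw) = (\bm{1}-\Theta(0))^{-1}(\bm{1}-\Theta(0)\Theta(0)^*)(\bm{1}-\Theta(0)^*)^{-1}$, a statement with content precisely when $\Theta(0)\neq 0$. (The series method can be repaired---for $n\geq 1$ the $n$-th moment $\int_{\bm{T}}\ov{z}^{\,n}\Om_\Theta(dz)$ equals the $n$-th Taylor coefficient of $(\bm{1}-\Theta)^{-1}$, directly from (\ref{eq:glotz})---but that repair is not in your write-up and cannot be quoted from Proposition \ref{prop:Elliott}.)

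Your ``probably shorter'' alternative is moreover circular: the reproducing property $\ip{f}{\delta_z^{\vec{x}}}_\Theta = (f(z),\vec{x})$ is a statement about the $K^2_\Theta$ inner product, and it says nothing about integrals against $\Om_\Theta$ until one knows that $V_\Theta=(\bm{1}-\Theta)C_\Theta$ carries the kernels $k_a^{\vec{x}}\in L^2_\Theta$ to point-evaluation vectors isometrically---and in the paper both of those facts (equation (\ref{eq:Vconst}) and the isometry proposition) are \emph{deduced from} Lemma \ref{lemming:id2}, not available before it. The non-circular proof, valid for every purely contractive $\Theta$, is the one your opening paragraph gestures at before the detour into coefficient matching: for $z\in\bm{T}$ one has the scalar partial-fraction identity
\be \frac{1}{1-\ov{a}z}\cdot\frac{1}{1-b\ov{z}} \ = \ \frac{1}{1-b\ov{a}}\left( \frac{1}{1-\ov{a}z} + \frac{1}{1-b\ov{z}} - 1 \right) ,\ee
and integrating against $\Om_\Theta$, using $\int_{\bm{T}}(1-b\ov{z})^{-1}\Om_\Theta(dz) = \frac{1}{2}\left(B_\Theta(b)+B_\Theta(0)^*\right)$ from (\ref{eq:glotz}), its adjoint for the first kernel, and $\Om_\Theta(\bm{T})=\re{B_\Theta(0)}$, the constant terms cancel and the left side becomes $\frac{1}{2}(1-b\ov{a})^{-1}\left(B_\Theta(b)+B_\Theta(a)^*\right)$; the elementary matrix identity $B_\Theta(b)+B_\Theta(a)^* = 2(\bm{1}-\Theta(b))^{-1}(\bm{1}-\Theta(b)\Theta(a)^*)(\bm{1}-\Theta(a)^*)^{-1}$ then yields exactly $(\bm{1}-\Theta(b))^{-1}\Delta_a(b)(\bm{1}-\Theta(a)^*)^{-1}$. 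You should replace both of your routes with this computation.
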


Given $f \in H^2 _\Theta$, define $V_\Theta f (z) := (1 - \Theta (z) ) C _\Theta f(z) $. We will write $k_z ^i$ for $k_z ^{e_i}$ where
$\{ e_i \}$ is the canonical ON basis for $\bm{C} ^n$.  Then observe that
by applying the above lemma,
\ba V_\Theta k_a ^i (z) & = & (1 -\Theta (z) ) \int _{\bm{T}} \frac{1}{1-\ov{a} w} \frac{1}{1-z \ov{w}} \Om _\Theta (dw) e_i
\nonumber \\ &  & \Delta _a (z) ( \bm{1} -\Theta (a) ^* ) ^{-1} e_i . \label{eq:Vconst} \ea This shows that $V_\Theta k_a ^i$ is a linear combination
of the point evaluation functions $\{ \delta_a ^j\} _{j=1} ^n \subset K^2 _\Theta$ so that $V_\Theta$ is a linear map from $H^2 _\Theta$ into
$K^2 _\Theta$. Here, as above $\delta _a ^j = \delta _a ^{e_j} = \Delta _a e_j$.

\begin{prop}
The linear map $V_\Theta : H^2 _\Theta \rightarrow K^2 _\Theta$ is an isometry of $H^2 _\Theta$ onto $K^2 _\Theta$.
\end{prop}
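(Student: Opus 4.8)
The plan is to show that $V_\Theta$ preserves inner products on the dense set of kernel functions $\{ k_a ^i \}$ and that it carries this set onto a dense spanning set of $K^2 _\Theta$, after which the statement follows by passing to the closure. Since the span of the $k_a ^i$ is dense in $H^2 _\Theta$ (this is the identification recorded just before the Lemma) and, by formula (\ref{eq:Vconst}), each image lies in $K^2 _\Theta$, it suffices to compare the two Gram matrices.

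First I would compute the Gram matrix of the kernel functions in $H^2 _\Theta$. Writing out $(k_a ^i , k_b ^j ) _\Theta = \int _\bm{T} \frac{1}{1-\ov{a}\zeta} \frac{1}{1 - b\ov{\zeta}} (\Om _\Theta (d\zeta) e_i , e_j )$, where I have used $\ov{1 - \ov{b}\zeta} = 1 - b\ov{\zeta}$ on $\bm{T}$, and applying Lemma \ref{lemming:id2} gives $(k_a ^i , k_b ^j ) _\Theta = \big( (\bm{1} - \Theta (b)) ^{-1} \Delta _a (b) (\bm{1} - \Theta (a) ^* ) ^{-1} e_i , e_j \big)$.

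Next I would compute the Gram matrix of the images. Formula (\ref{eq:Vconst}) identifies $V_\Theta k_a ^i = \delta _a ^{\vec{x} _a ^i}$ with $\vec{x} _a ^i := (\bm{1} - \Theta (a) ^* ) ^{-1} e_i$; note $\bm{1} - \Theta (a) ^*$ is invertible because $\Theta$ is purely contractive, so $\| \Theta (a) \| < 1$ for $a \in \bm{D}$. Applying the reproducing property $\ip{f}{\delta _b ^{\vec{y}}} _\Theta = (f(b), \vec{y})$ to $f = \delta _a ^{\vec{x}}$ yields $\ip{\delta _a ^{\vec{x}}}{\delta _b ^{\vec{y}}} _\Theta = (\Delta _a (b) \vec{x} , \vec{y})$. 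Substituting $\vec{x} = \vec{x} _a ^i$ and $\vec{y} = \vec{x} _b ^j$ and transposing the factor $(\bm{1} - \Theta (b) ^*) ^{-1}$ across the inner product via $\big( (\bm{1} - \Theta (b) ^*) ^{-1}\big) ^* = (\bm{1} - \Theta (b)) ^{-1}$ reproduces exactly the expression of the previous step. Hence $\ip{V_\Theta k_a ^i}{V_\Theta k_b ^j} _\Theta = (k_a ^i , k_b ^j ) _\Theta$, so $V_\Theta$ is isometric on the span of the kernel functions. For surjectivity, invertibility of $(\bm{1} - \Theta (a) ^* ) ^{-1}$ forces the vectors $\vec{x} _a ^i$, $i = 1, \dots , n$, to span $\bm{C} ^n$, so the image $\{ V_\Theta k_a ^i \}$ spans the same set as $\{ \delta _a ^{\vec{x}} : a \in \bm{D}, \ \vec{x} \in \bm{C} ^n \}$, which is dense in $K^2 _\Theta$ by definition. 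An inner-product-preserving linear map with dense domain and dense range extends uniquely to an isometry of $H^2 _\Theta$ onto $K^2 _\Theta$ (isometries have closed range); and to see the formula-defined $V_\Theta$ is this extension I would note that $C_\Theta f(z) = (f, k_z) _\Theta$, hence $V_\Theta f(z) = (\bm{1} - \Theta(z)) C_\Theta f(z)$, is continuous in $f$ for each fixed $z$, while norm convergence in the reproducing kernel space $K^2 _\Theta$ forces pointwise convergence, so the continuous extension agrees pointwise with $V_\Theta f$.

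The main obstacle is the Gram-matrix identity in the two middle steps. It is essentially bookkeeping, but one must track which slot of each inner product is conjugate-linear and correctly move the invertible factors $(\bm{1} - \Theta (\cdot) ^*) ^{-1}$ across the inner product, relying on Lemma \ref{lemming:id2} to produce the matching kernel $\Delta _a (b)$; everything else is a routine density and continuity argument.
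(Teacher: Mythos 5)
Your proof is correct and takes essentially the same route as the paper's: both arguments rest on Lemma \ref{lemming:id2}, the identity (\ref{eq:Vconst}), and density of the kernel functions, with the only difference being that the paper pre-multiplies the kernels by the invertible factor $(\bm{1}-\Theta (a) ^*)$ (forming $\hat{k} _a ^j$) so that both Gram matrices reduce immediately to $(\Delta _a (b) e_i , e_j)$, whereas you keep the raw kernels $k _a ^i$ and move those factors across the inner product by hand. Your additional remarks on surjectivity and on why the formula-defined $V_\Theta$ agrees with the continuous extension are sound and slightly more careful than the paper, which leaves those points implicit.
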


\begin{proof}
    For any $a \in \bm{D}$, $(\bm{1} - \Theta (a) ^* )$ is invertible so that $\{ (\bm{1} -\Theta (a)  ^* ) e_i \} _{i=1} ^n$ is a basis
for $\bm{C} ^n$. It follows that the span of the set of functions $S := \{ \hat{k} _a ^j  \ | \ a \in \bm{D} , \ 1\leq j\leq n \} \subset H^2 _\Theta $ where
$\hat{k} _a ^j (z) = \frac{1}{1-\ov{a}z} (\bm{1} -\Theta (a) ^* ) e_j $ is equal to the span of the $\{ k _a ^j \ | \ a \in \bm{D} ,
\ 1 \leq j \leq n \} $. The span of the last set is dense in $H^2 _\Theta$, and hence so is the span of $S$. Hence to prove
that $V_\Theta $ can be uniquely extended to an isometry of $H^2 _\Theta$ onto $K^2 _\Theta$, it suffices to show that
\be \left( \hat{k} _a ^i , \hat{k} _b ^j \right) _\Theta = \ip{ V _\Theta \hat{k} _a ^i }{ V_\Theta \hat{k} _b ^j } _\Theta . \label{eq:ipeq} \ee
The left hand side of the above equation is equal to
\be \left( \int _\bm{T} \frac{1}{1-\ov{a}z} \frac{1}{1-b\ov{z}} \Om _\Theta (dz) (\bm{1} -\Theta (a) ^* ) e_i ,
(\bm{1} -\Theta (b) ^* ) e_j \right) = ( \Delta _a (b) e_i , e_j ) , \ee by the previous lemma.

By the calculation preceding this proposition, $V_\Theta \hat{k} _a ^i (z) = \delta _a ^i (z) $ so that the
right hand side of equation (\ref{eq:ipeq}) is equal to $ \ip{ \delta _a ^i}{\delta _b ^j} _\Theta $. Since these are the point evaluation
functions in $K^2 _\Theta $, this is equal to $( \delta _a ^i (b)  , e_j ) = ( \Delta _a (b) e_i , e_j)$. Hence
both sides are equal and $V_\Theta $ is an isometry.

\end{proof}

Let $Y_\Theta (A) := Z _\Theta  (A) | _{H^2 _\Theta}$ (it is clear that $H^2 _\Theta$ is invariant for each $Z_\Theta (A)$),
$A \in (\bm{M} _n ) _1$, and let $X _\Theta := S^* | _{K^2 _\Theta }$ where $S$ is the shift (multiplication by $z$) in $H^2 _n (\bm{D})$.

\begin{prop}
The compressed backwards shift $X_\Theta$ and $Y_\Theta ^*$ are related as follows:
\be X_\Theta V_\Theta = V_\Theta Y_\Theta ^* \left( \bm{1} - ( \bm{1}_n -\Theta (0) ) \sum _{i=1} ^n \left( \cdot , b_i ^- \right) _\Theta b_i ^- \right) .\ee
\label{prop:twine}
\end{prop}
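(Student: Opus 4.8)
The plan is to verify the intertwining relation on the dense set of reproducing-kernel vectors $\hat{k}_a^i \in H^2_\Theta$, since $V_\Theta$ is an isometry onto $K^2_\Theta$ and both sides of the identity are bounded operators. The strategy exploits the explicit formula $V_\Theta \hat{k}_a^i = \delta_a^i = \Delta_a e_i$ established in the preceding proposition, together with the known action of the backwards shift $X_\Theta = S^*|_{K^2_\Theta}$ on the kernel functions $\Delta_a$. First I would recall that $X_\Theta$ acts on point-evaluation functions by a standard formula of the form $S^* \Delta_a = \bar{a}\,\Delta_a - (\text{boundary correction at } a=0)$; concretely, since $\Delta_a(z) = (\bm{1} - \Theta(z)\Theta(a)^*)/(1 - z\bar{a})$, applying $S^*$ (which in $H^2_n(\bm{D})$ divides out by $z$ after subtracting the value at $0$) yields an expression that is again a linear combination of kernel functions $\delta_a^j$ plus a term supported at $z=0$ involving $\Theta(0)$.

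The second ingredient is the action of $Y_\Theta^* = (Z_\Theta|_{H^2_\Theta})^*$ on the $\hat{k}_a^i$. Here I would compute $\langle Y_\Theta^* \hat{k}_a^i, \hat{k}_b^j \rangle_\Theta = (\hat{k}_a^i, Z_\Theta \hat{k}_b^j)_\Theta$ directly from the definition of the $H^2_\Theta$ inner product and the Cauchy-kernel formula of Lemma \ref{lemming:id2}. Multiplication by $Z_\Theta$ shifts the kernel index, and after using the reproducing property I would obtain an explicit matrix-valued expression for $Y_\Theta^* \hat{k}_a^i$ as a combination of kernel functions. The appearance of $\Theta(0)$ is the key point: the discrepancy between $S^*$ acting on $K^2_\Theta$ and the image under $V_\Theta$ of $Z_\Theta^*$ acting on $H^2_\Theta$ is concentrated at the origin, and this is precisely what the correction operator $\bm{1} - (\bm{1}_n - \Theta(0))\sum_i (\cdot, b_i^-)_\Theta\, b_i^-$ is designed to absorb. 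The operator $\sum_i (\cdot, b_i^-)_\Theta\, b_i^-$ is the projection $P_-$ onto the constants $\mf{D}_-$, so the correction modifies $Y_\Theta^*$ only on the finite-dimensional space of constant functions, rescaling by $\bm{1}_n - \Theta(0)$ there.

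Concretely, I would evaluate both $X_\Theta V_\Theta \hat{k}_a^i$ and $V_\Theta Y_\Theta^*(\cdots)\hat{k}_a^i$ as explicit $\bm{C}^n$-valued functions of $z$, using \eqref{eq:Vconst} to convert the $V_\Theta$-image into $\Delta_a(z)(\bm{1}-\Theta(a)^*)^{-1}e_i$, and then check pointwise equality. Because $P_- \hat{k}_a^i$ extracts the constant Fourier mode of $\hat{k}_a^i$, the factor $(\bm{1}_n - \Theta(0))$ multiplying it will exactly cancel the $\Theta(0)$-terms that arise when $S^*$ is applied at the origin, making the two sides agree. Extending by linearity and density, and invoking boundedness of all operators involved, then completes the argument.

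The main obstacle I anticipate is the careful bookkeeping at $z=0$: tracking how the backwards shift's subtraction of the value at the origin interacts with the $\Theta(0)$-dependence of the kernels $\Delta_a$, and confirming that the correction term matches exactly rather than up to an extra constant. This is where the hypothesis that the correction acts only on the constants $\mf{D}_-$ (via $P_-$) must be reconciled with the precise form of $S^*$ on $K^2_\Theta$; verifying that the finite-rank discrepancy is supported exactly on $\mf{D}_-$ and scaled by the right matrix factor is the crux of the computation.
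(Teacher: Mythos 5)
Your strategy is essentially the paper's, just reorganized: both arguments run on the same engine, namely the adjoint relation $(Y_\Theta ^* f , k_z ^i)_\Theta = (f , Z_\Theta k_z ^i)_\Theta$, the Cauchy transform, Lemma \ref{lemming:id2}, and the formula (\ref{eq:Vconst}) for $V_\Theta$ on kernel vectors. The paper simply carries out the computation for an arbitrary $f \in H^2 _\Theta$, obtaining $V_\Theta Y_\Theta ^* f = S^* V_\Theta f + S^* \Theta \, (C_\Theta f)(0)$ in one stroke, and then rearranges; your plan verifies the same identity on the dense family $\{ \hat{k} _a ^i \}$ and extends by density and boundedness, which is legitimate but costs a second round of Lemma \ref{lemming:id2} computations that the general-$f$ derivation avoids.

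Two points in your sketch need repair before the bookkeeping can close. First, the action of $S^*$ on the kernels is $S^* \Delta _a = \ov{a} \Delta _a - (S^* \Theta) \Theta (a) ^*$: the discrepancy $-(S^*\Theta)\Theta (a)^*$ involves $\Theta(a)^*$, is a genuine function spread over $\bm{D}$ (a matrix multiple of $S^*\Theta$), and is neither "supported at $z=0$" nor a combination of the $\delta _a ^j$. What is true is the operator-level statement: the discrepancy $X_\Theta V_\Theta f - V_\Theta Y_\Theta ^* f = -S^*\Theta\, (C_\Theta f)(0)$ depends on $f$ only through its constant mode, hence is finite rank. To see that composing with $\bm{1} - (\bm{1} - \Theta(0)) \sum _i (\cdot , b_i ^-)_\Theta b_i ^-$ removes it exactly, you need the identity $S^* \Theta e_i = V_\Theta Y_\Theta ^* (\bm{1} - \Theta(0)) e_i$ (the paper's equation (\ref{eq:id3})), and this does not come for free: it is obtained by running the same Cauchy-transform computation on the constant functions and simplifying with $S^*\Delta _0 = -(S^*\Theta)\Theta(0)^*$ together with the total-mass formula $\int _\bm{T} \Om _\Theta (dw) = (\bm{1} - \Theta(0))^{-1} (\bm{1} - \Theta(0)\Theta(0)^*)(\bm{1} - \Theta(0)^*)^{-1}$ from Lemma \ref{lemming:id2}. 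Your proposal asserts that the $\Theta(0)$-terms "exactly cancel" but never exhibits this identity, and it is precisely the crux. Second, a smaller point: this section does not assume $\Theta(0) = 0$, and when $\Theta(0) \neq 0$ the operator $\sum _i (\cdot , b_i ^-)_\Theta \, b_i ^-$ is the constant-mode map $f \mapsto (C_\Theta f)(0)$ (viewed as a constant function); it is not idempotent, hence not the orthogonal projection onto $\mf{D} _-$. The algebra is indifferent to this, but the identification you lean on should be stated as such.
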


    In the above statement, $Y_\Theta := Y _\Theta (\bm{1} )$. If we let $P = P_- = \sum _{i=1} ^n \left( \cdot , b_i ^- \right) _\Theta b_i ^-$, the projector onto the constant functions in $H^2 _\Theta$, then the claim can be written \be V_\Theta ^* X _\Theta V_\Theta =
    Y_\Theta ^* \left( 1 -P(1 -\Theta (0)) P \right) \label{eq:opex}.\ee

This proof of this proposition is an obvious $n-$dimensional generalization of calculations in \cite{Sarason-dB}.

\begin{proof}
For simplicity identify the fixed basis of $\bm{C} ^n$, $\{ e_i \}$ with the basis $\{ b_i ^- \}$ for $\mf{D} _-$, the constant functions in $L^2 _\Theta$.  This basis is orthonormal if $\Theta (0) =0$. Given any $f \in H^2 _\Theta$, consider $ V_\Theta Y _\Theta ^* f (z) = (\bm{1} -\Theta (z)) C_\Theta Y _\Theta ^* f (z)$.
First as in \cite{Sarason-dB} it is easy to calculate that
\ba ( C_\Theta Y _\Theta ^* f (z) , e_i ) _{\bm{C} ^n} & = & \int _\bm{T} \frac{1}{1-\ov{w}z} (\Om _\Theta (dw) Y^* _\Theta f(w), e_i ) \nonumber \\
& =& \left( Y_\Theta ^* f , k_z ^i \right) _\Theta = \left( f , Z_\Theta k_z ^i \right) _\Theta \nonumber \\ & = & \int _\bm{T} \frac{\ov{w}}{1-\ov{w}z}
 (\Om _\Theta (dw) f(w) , e_i ) = \frac{1}{z} ( C_\Theta f(z) - C_\Theta f(0) ) .\ea

It follows that
\ba V_\Theta Y_\Theta ^* f(z) & = &  (\bm{1} - \Theta (z) ) \frac{ C_\Theta f (z) -C _\Theta f (0) }{z} \nonumber \\
& = & S^* V_\Theta f (z) + (S^* \Theta (z) ) C _\Theta f (0) \nonumber ,\ea and hence that
\be V_\Theta Y_\Theta ^* f = S^* V_\Theta f + S^* \Theta (C_\Theta f (0) ) .\ee

Applying this formula to the case where $f = e_i$, and using equation (\ref{eq:Vconst})  yields
\be V_\Theta Y _\Theta ^* e_i = S^* \Delta _0 ( \bm{1} - \Theta (0) ^* ) ^{-1} e_i - S^* \Theta \int _\bm{T} \Om _\Theta (dw) e_i.
\label{eq:actonconst} \ee

Short calculations show that $S^* \Delta _0 = -(S^* \Theta) \Theta (0)^*$ while Lemma \ref{lemming:id2} implies that
$\int _{\bm{T} } \Om _\Theta (dw) = (\bm{1} -\Theta (0) ) ^{-1} (\bm{1} -\Theta (0) \Theta (0) ^* ) ( \bm{1} -\Theta (0) ^* )^{-1}$.
Substituting these formulas into equation (\ref{eq:actonconst}) and simplifying leads to

\be V_\Theta Y _\Theta ^* e_i = S^* \Theta (\bm{1} -\Theta (0) )^{-1} e_i ,\ee or equivalently that
\be S^* \Theta e_i = V_\Theta Y _\Theta ^* (\bm{1} -\Theta (0) ) e_i .\label{eq:id3} \ee

Since \be C_\Theta f (0) = \sum _{i=1} ^n ( C_\Theta f (0) , e_i) e_i  = \sum _{i=1} ^n \int _\bm{T}
(\Om _\Theta (dw) f , e_i ) e_i = \sum _{i=1} ^n \left( f , e_i \right)_\Theta e_i ,\ee it follows that
\ba V_\Theta Y _\Theta ^* f & = & S^* V_\Theta f + S^*\Theta (C _\Theta f ) (0) \nonumber \\
& = & S^* V_\Theta f + \sum _{i=1} ^n \left( f , e_i \right) _\Theta V_\Theta Y_\Theta ^* (\bm{1} - \Theta (0) ) e_i \nonumber \\
&= & X_\Theta V_\Theta ( \bm{1} -\sum _{i=1} ^n \left( \cdot , e_i \right)_\Theta (\bm{1} -\Theta (0) ) e_i ) f .\ea
\end{proof}

\subsection{Extreme points}

    In the case where $\Theta$ is scalar-valued, it is well known that $\Theta$ is an extreme point of the unit ball of $H^\infty$
if and only if $1 - |\Theta|$ fails to be log-integrable \cite[pgs. 138-139]{Hoffman}, and that this happens if and only if $H^2 _\Theta = L^2 _\Theta$. These
facts follow easily from Szeg$\mr{\ddot{o}}$'s theorem \cite[pgs. 49-50]{Hoffman} and the fact that the derivative of the absolutely continuous
part of $\Om _\Theta$ with respect to Lebesgue measure is $\frac{1- |\Theta | ^2 }{|1-\Theta |^2}$.

These facts generalize almost verbatim to the case where $\Theta$ is $\bm{M} _n$-valued and purely contractive. First,
it is easy to check \cite[Theorem 9]{Elliott} that the derivative of the absolutely continuous part of $\Om _\Theta$
with respect to Lebesgue measure is \be W_\Theta  (\zeta ) = \left( \bm{1} - \Theta (\zeta )  \right) ^{-1} \left( \bm{1} -\Theta (\zeta) \Theta (\zeta ) ^* \right) \left(  \bm{1} - \Theta (\zeta )^*  \right) ^{-1}  .\ee

By the Helson-Lowdenslager generalization of Szeg$\mr{\ddot{o}}$'s Theorem,
\be \exp \left( \int _\bm{T} \mr{tr} ( \ln (W_\Theta (\zeta) ) m (d\zeta ) ) \right) = \inf _{A_0 , P} \int _{\bm{T}} \mr{tr}
\left( (A_0 + P(\zeta ) ) ^* (A_0 +P(\zeta ) )  \Om _\Theta (d\zeta ) \right) \label{eq:Helsons} .\ee Here the infimum is taken over all $n \times n$ matrices
$A_0$ of determinant one, and all polynomial matrix functions $P (z) = \sum _{j=1} ^k A_j z^j$, $A_j \in \bm{M} _n$
for $z \in \bm{D}$ which vanish at the origin \cite[Theorem 8]{Helson}.

With this fact in hand, and the fact that if $A,B$ are positive definite matrices the identity $\tr (\ln AB ) = \tr (\ln A) + \tr (\ln B)$ holds (this follows from the multiplicative property of the determinant), one can show as in the scalar case that $H^2 _\Theta = L^2 _\Theta$ if and only if $\int _\bm{T} \mr{tr} \left( (\bm{1} - | \Theta (z) | ) m(dz) \right) = - \infty$.  Indeed, in this case the left hand side of equation (\ref{eq:Helsons}) vanishes, and this implies that if $\mc{A} _0 ^* := \bigvee _{k \in \bm{N}} Z_\Theta ^{-k} \mf{D} _-$ that $\mf{D} _- \subset \ov{\mc{A} _0 ^*}$ and hence that $\mf{D} _- \subset \ov{\mc{A} _0}$ where $\mc{A} _0 := \bigvee _{k\in \bm{N}} Z_\Theta ^k \mf{D} _-$.  This readily leads to the conclusion that $H^2 _\Theta = L^2 _\Theta$. Moreover, using the fact that by \cite[Theorem 9]{Helson}, $\int _{\bm{T}} \tr \left( \ln | \Theta (z) | ^2 m (dz) \right) > - \infty$, it is easy to generalize the proof characterizing extreme points of the unit ball of $H^\infty (\bm{D})$ \cite[pgs. 138-139]{Hoffman}
to obtain an analogous characterization of extreme points of the unit ball of $H^\infty _{\bm{M} _n} (\bm{D})$. In summary one
can establish the following without difficulty:

\begin{thm}
    Given $\Theta \in \left( H^\infty _{\bm{M} _n} (\bm{D}) \right) _1$, the following are equivalent: \\
    (i) $\Theta $ is an extreme point. \\
    (ii) $\int _{\bm{T}} \tr \left( \ln ( \bm{1} - | \Theta (z) |) m (dz) \right) = -\infty$ \\
    (iii) $L^2 _\Theta = H^2 _\Theta$
\end{thm}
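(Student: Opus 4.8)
The plan is to establish the cycle (i) $\Rightarrow$ (iii) $\Rightarrow$ (ii) $\Rightarrow$ (i), leaning on the Helson--Lowdenslager extension of Szeg\H{o}'s theorem quoted in equation (\ref{eq:Helsons}) as the central engine. The key analytic input is that the left-hand side of (\ref{eq:Helsons}), namely $\exp\left( \int_{\bm{T}} \tr(\ln W_\Theta(\zeta)) \, m(d\zeta) \right)$, vanishes precisely when the trace integral $\int_{\bm{T}} \tr(\ln(\bm{1} - |\Theta(z)|)) \, m(dz) = -\infty$. To see this equivalence I would use the stated identity $\tr(\ln AB) = \tr(\ln A) + \tr(\ln B)$ to factor $\ln W_\Theta$ in terms of $\ln(\bm{1} - \Theta\Theta^*)$ and the (integrable, by \cite[Theorem 9]{Helson}) terms $\ln|\bm{1} - \Theta|^2$, and then compare the integrability of $\tr \ln(\bm{1} - \Theta\Theta^*)$ with that of $\tr \ln(\bm{1} - |\Theta|)$; since $\bm{1} - \Theta\Theta^* = \bm{1} - |\Theta^*|^2 = (\bm{1}-|\Theta^*|)(\bm{1}+|\Theta^*|)$ and $\bm{1} + |\Theta^*|$ is bounded above and below, the divergence of one trace-log integral to $-\infty$ is equivalent to that of the other.

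For the implication (ii) $\Rightarrow$ (iii), I would argue exactly as sketched in the paragraph preceding the theorem: when (ii) holds the right-hand side of (\ref{eq:Helsons}) is zero, meaning the infimum over determinant-one matrices $A_0$ and polynomial matrices $P$ vanishing at the origin of $\int_{\bm{T}} \tr\left( (A_0 + P(\zeta))^*(A_0 + P(\zeta)) \, \Om_\Theta(d\zeta) \right)$ is zero. This forces the constant functions $\mf{D}_-$ to lie in the closure of $\mc{A}_0^* := \bigvee_{k \in \bm{N}} Z_\Theta^{-k} \mf{D}_-$, and by the symmetry afforded by the fact that $Z_\Theta$ is unitary (so $Z_\Theta^* = Z_\Theta^{-1}$), equally into the closure of $\mc{A}_0 := \bigvee_{k \in \bm{N}} Z_\Theta^k \mf{D}_-$. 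Once $\mf{D}_- \subset \ov{\mc{A}_0}$, applying powers of $Z_\Theta$ shows that every $Z_\Theta^k \mf{D}_-$, $k \in \bm{Z}$, lies in $\ov{\mc{A}_0} = H^2_\Theta$; since the span of all such is dense in $L^2_\Theta$, we conclude $H^2_\Theta = L^2_\Theta$. For the converse (iii) $\Rightarrow$ (ii) one runs this argument backwards: if $H^2_\Theta = L^2_\Theta$ then $\mf{D}_-$ is contained in the backward-shift-invariant span, which forces the Szeg\H{o} infimum to vanish, hence the trace-log integral diverges.

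For the remaining link between (i) and the analytic condition (ii), I would follow the classical scalar characterization of extreme points in \cite[pgs.~138--139]{Hoffman} and lift it to the matrix setting. The scalar statement is that $\Theta$ is an extreme point of the unit ball of $H^\infty$ iff $\bm{1} - |\Theta|$ is not log-integrable; the matrix analogue replaces the scalar log-integral by the trace-log integral in (ii). The essential point is that $\Theta$ fails to be extreme exactly when there is a nonzero $G \in H^\infty_{\bm{M}_n}(\bm{D})$ with $\Theta \pm G$ both contractive, which (after the usual reduction) is equivalent to the solvability of a factorization/interpolation problem governed by the integrability of $\tr \ln(\bm{1} - |\Theta|)$; here I would invoke $\int_{\bm{T}} \tr(\ln|\Theta|^2) \, m(dz) > -\infty$ from \cite[Theorem 9]{Helson} to guarantee the relevant outer factors exist and to keep the auxiliary log-integrals finite.

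I expect the main obstacle to be the careful matrix bookkeeping in the extreme-point equivalence (i) $\Leftrightarrow$ (ii): in the scalar case one manipulates $|\Theta|$ directly, but for $\bm{M}_n$-valued $\Theta$ one must handle $|\Theta| = (\Theta^*\Theta)^{1/2}$ and $|\Theta^*| = (\Theta\Theta^*)^{1/2}$ separately, keep track of non-commutativity when factoring $\bm{1} - \Theta\Theta^*$, and ensure that the perturbation $G$ witnessing non-extremality can be built with the correct boundary behaviour. Everything else reduces to trace-logarithm identities and the cited Helson--Lowdenslager machinery, which the paper has already set up; the delicate part is verifying that the matrix factorizations producing the perturbing function $G$ are available precisely under the finiteness of $\int_{\bm{T}} \tr(\ln|\Theta|^2)\,m(dz)$ together with the integrability of $\tr\ln(\bm{1}-|\Theta|)$.
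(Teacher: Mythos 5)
Your proposal follows essentially the same route as the paper's own (sketched) proof: the Helson--Lowdenslager matrix Szeg\H{o} theorem of equation (\ref{eq:Helsons}) together with the identity $\tr (\ln AB) = \tr (\ln A) + \tr(\ln B)$ to link the vanishing of the geometric mean of $W_\Theta$ with condition (ii), the prediction-theoretic argument that the vanishing infimum forces $\mf{D} _- \subset \ov{\mc{A} _0}$ and hence $H^2 _\Theta = L^2 _\Theta$ (and conversely), and the generalization of Hoffman's scalar extreme-point characterization using $\int _{\bm{T}} \tr \left( \ln | \Theta (z) | ^2 m (dz) \right) > - \infty$ from Helson--Lowdenslager for the equivalence of (i) and (ii). The paper itself leaves the matrix bookkeeping at the same level of detail you do, so your proposal is a faithful match.
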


\subsubsection{Remark} For brevity we will say that $\Theta $ is extreme if it is an extreme point of the unit ball
of $H^\infty _{\bm{M} _n } (\bm{D} )$. In this case since $H^2 _\Theta = L^2 _\Theta$ we have that $Y_\Theta = Z_\Theta | _{H^2 _\Theta}
= Z_\Theta$ in Proposition \ref{prop:twine}.

\subsection{Determination of AC measures}

    The Cauchy integral representation of $K^2 _\Theta$ provides an another way of proving that $\Om _{\Theta _U} =
\La_U$ for $U$ unitary that is independent of the methods used in Section \ref{subsection:ACmeas}. In this subsection we do this and prove that $Z_\Theta (U)$ is unitarily equivalent to $Z_{\Theta _U}$.
Recall that $\Theta _U = \Theta U^*$.

Suppose that $\Theta (0) = 0$. Consider the subspace $K_0$ of $K^2 _\Theta$ spanned by the point evaluation functions
at $z=0$, $\delta ^j _0$, $1 \leq j \leq n$ where $\delta _0 ^j (z) = \Delta _0 (z) e_j = (1 - \Theta (z) \Theta (0) ^*) e_j
= e_j$ since $\Theta (0) = 0$. Then from earlier calculations we see that if $P =P_-$ denotes the
projection onto the constant functions in $H^2 _\Theta$ and $Q$ the projector onto $K_0$, the constant functions in $K^2 _\Theta$
then $V_\Theta P = Q V_\Theta$.
Let $R_\Theta$ denote the projection of $L^2 _\Theta$ onto $H^2 _\Theta$. Before
we defined $Z_\Theta (A) = Z_\Theta + P (A - \bm{1} ) P Z_\Theta $. Since $H^2 _\Theta$ is invariant for $Z_\Theta (A)$
it follows that
\ba Y_\Theta (A) ^* & = & R_\Theta Z_\Theta (A) ^* R_\Theta = R_\Theta Z_\Theta ^* R_\Theta + R_\Theta Z_\Theta ^* P (A ^* -\bm{1}) P R_\Theta
\nonumber \\ & = & Y_\Theta ^* + Y_\Theta ^* P (A^* -\bm{1}) P .\ea

Now by the intertwining relation of Proposition \ref{prop:twine}, $V_\Theta Y_\Theta V_\Theta ^* = X_\Theta
+ V_\Theta Y _\Theta ^* P V_\Theta ^* $. As calculated previously in equation \ref{eq:id3},
$V_\Theta Y _\Theta ^* e_i = S^* \Theta e_i$.
Hence we get that $V_\Theta Y_\Theta ^* P V_\Theta ^* = S^* \Theta V_\Theta P V_\Theta ^* = S^* \Theta  Q $.
This shows that \be V_\Theta Y_\Theta ^* V_\Theta ^* = X_\Theta + S^* \Theta Q \label{eq:shiftimage}, \ee and hence that
\be V_\Theta Y_\Theta (A) ^* V_\Theta ^* = (X_\Theta + S^* \Theta Q ) \left( \bm{1} + Q (A^* - \bm{1} )Q
\right) = X_\Theta +S^* \Theta QA^*Q \label{eq:image} .\ee Note that here the operator $QAQ$ denotes the operator
$\sum _{ij =1} ^n \ip{\cdot}{\delta _0 ^i}_\Theta  A_{ij} \delta _0 ^j$ where $\{ \delta _0 ^j =e_j \}$ is an ON
basis for the constant functions $K_0 \subset K^2 _\Theta$ and $A \in \ov{ (bm{M} _n ) _1 }$. In particular we conclude that
$Y _\Theta (0) ^*$ is unitarily equivalent to $X _\Theta$ (under our assumption that $\Theta (0) =0$). If $\Theta $ is extreme then also $Y _\Theta (A) = Z_\Theta (A)$.

    Now recall that the de Branges-Rovnyak spaces $K^2 _\Theta $ are the ranges of $R_\Theta = \sqrt{ \bm{1} - T_\Theta
T_\Theta ^*}$. If we define $\Theta _A := \Theta A^*$ for
$A \in \ov{ (\bm{M} _n ) _1 }$, then it follows that $K^2 _{\Theta _U} = K^2 _\Theta$ for unitary $U$.

\begin{lemming}
    Given any $U \in \mc{U} (n)$, let $W_U := V_\Theta ^*  V_{\Theta _U}$. Then $W_U Y _{\Theta _U} ^* = Y _\Theta (U) ^* W_U $.
    \label{lemming:intertwiner}
\end{lemming}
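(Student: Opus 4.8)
The plan is to reduce the lemma to the operator identities (\ref{eq:shiftimage}) and (\ref{eq:image}), which were established for the model space $K^2 _\Theta$, by invoking them once for $\Theta$ itself and once for the rotated symbol $\Theta _U = \Theta U^*$. The first thing to record is that the hypotheses needed for those identities transfer to $\Theta _U$. Since $\Theta (0) = 0$ we have $\Theta _U (0) = \Theta (0) U^* = 0$; and since $U$ is unitary the kernel function is unchanged, $\frac{\bm{1} - \Theta _U (z) \Theta _U (w) ^*}{1 - z \ov{w}} = \frac{\bm{1} - \Theta (z) U^* U \Theta (w) ^*}{1 - z \ov{w}} = \Delta _w (z)$. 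Equivalently $T_{\Theta _U} T_{\Theta _U} ^* = T_\Theta T_\Theta ^*$, so that $R_{\Theta _U} = R_\Theta$ and $K^2 _{\Theta _U} = K^2 _\Theta$ as reproducing kernel Hilbert spaces, not merely as sets. In particular the restricted backwards shifts coincide, $X_{\Theta _U} = X_\Theta$, and the projectors onto the constant functions coincide, $Q_U = Q$, the point evaluations at the origin being $\delta _0 ^j = e_j$ in both spaces because $\Theta (0) = 0 = \Theta _U (0)$.

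Next I would apply the intertwining identity (\ref{eq:shiftimage}) with $\Theta$ replaced by $\Theta _U$, obtaining $V_{\Theta _U} Y _{\Theta _U} ^* V_{\Theta _U} ^* = X_{\Theta _U} + S^* \Theta _U Q_U$. Substituting $X_{\Theta _U} = X_\Theta$, $Q_U = Q$, and $\Theta _U = \Theta U^*$ turns the right-hand side into $X_\Theta + S^* \Theta U^* Q$. The crux is then to identify this perturbation term with the one appearing in (\ref{eq:image}) for the parameter $A = U$. Since $Q U^* Q$ acts on the constants $K_0$ exactly as multiplication by the matrix $U^*$, and since $U^* v$ is again a constant vector, one has $S^* \Theta Q U^* Q = S^* (\Theta U^*) Q = S^* \Theta _U Q$. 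Therefore $V_{\Theta _U} Y _{\Theta _U} ^* V_{\Theta _U} ^* = X_\Theta + S^* \Theta Q U^* Q = V_\Theta Y_\Theta (U) ^* V_\Theta ^*$, so the two perturbed compressed shifts induce the same operator on the common space $K^2 _\Theta$.

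Finally I would conjugate this equality of operators on $K^2 _\Theta$ back to $H^2 _{\Theta _U}$ and $H^2 _\Theta$. Because $V_\Theta$ and $V_{\Theta _U}$ are both isometries onto $K^2 _\Theta$, the composition $W_U = V_\Theta ^* V_{\Theta _U}$ is a unitary from $H^2 _{\Theta _U}$ onto $H^2 _\Theta$ with $V_\Theta ^* V_\Theta = \bm{1}$ and $V_{\Theta _U} ^* V_{\Theta _U} = \bm{1}$. Multiplying $V_{\Theta _U} Y _{\Theta _U} ^* V_{\Theta _U} ^* = V_\Theta Y_\Theta (U) ^* V_\Theta ^*$ on the left by $V_\Theta ^*$ and on the right by $V_{\Theta _U}$ then collapses, via these relations, to $W_U Y _{\Theta _U} ^* = Y_\Theta (U) ^* W_U$, which is the assertion. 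The one step demanding genuine care is the middle paragraph: checking that the rank-$n$ perturbation $S^* \Theta _U Q_U$ coming from the $\Theta _U$-model really does match the term $S^* \Theta Q U^* Q$ coming from the $U$-perturbation of the $\Theta$-model, that is, keeping careful track of how the unitary $U$ enters multiplicatively through $\Theta _U = \Theta U^*$ versus through the coordinate compression $Q U^* Q$ on the constant functions.
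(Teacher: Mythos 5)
Your proof is correct and takes essentially the same approach as the paper's: the paper likewise applies equation (\ref{eq:shiftimage}) with $\Theta$ replaced by $\Theta _U$, notes that $K^2 _{\Theta _U} = K^2 _\Theta$ so that $X _{\Theta _U} = X _\Theta$ acts on the common space, and identifies $X _\Theta + S^* \Theta U^* Q$ with $V _\Theta Y _\Theta (U) ^* V _\Theta ^*$ via equation (\ref{eq:image}). Your write-up simply makes explicit the verifications the paper leaves implicit, namely that $\Theta _U (0) = 0$, that the kernels and hence the projections $Q _U = Q$ coincide, that $S^* \Theta Q U^* Q = S^* \Theta _U Q$ on constants, and the final conjugation by the isometries $V _\Theta$ and $V _{\Theta _U}$.
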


\begin{proof}
    By previous calculations,
\be V_{\Theta _U} Y_{\Theta  _U } ^* V_{\Theta _U} ^* = X + S^*\Theta _U Q  = X + S^* \Theta U^* Q.\ee Here $X := X_\Theta = X_{\Theta _U}$ acts
on $K^2 _\Theta = K^2 _{\Theta _U}$. But by equation (\ref{eq:image}) this agrees with $V_\Theta Y_\Theta (U) ^* V_\Theta ^*$.
\end{proof}

\begin{prop}
Suppose that $\Theta (0) = 0$. For any $U \in \mc{U} (n)$, let $\Om _{\Theta _U}$ be the measure associated with $\Theta _U := \Theta U^*$ by the
Herglotz theorem, and let $\La _U$ denote the $\bm{M} _n$ valued positive measure on $\bm{T} $ defined by $\La _U (I) := [ \left( \chi _I ( Z _\Theta (U) ) e_i,
e_j \right) _\Theta ]$. Then $\Om _{\Theta _U} = \La _U$. \label{prop:weakprop}
\end{prop}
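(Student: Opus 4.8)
The plan is to identify the two positive $\bm{M} _n$-valued measures by matching their moments, using the unitary intertwiner of Lemma \ref{lemming:intertwiner} in place of the moment recursion of Subsection \ref{subsection:ACmeas}. Both $\Om _{\Theta _U}$ and $\La _U$ are self-adjoint (positive) matrix measures on $\bm{T}$, so for each of them the $(-k)$-th moment matrix is the adjoint of the $k$-th moment matrix (on $\bm{T}$ one has $\ov{\zeta} = \zeta ^{-1}$); consequently it suffices to prove that $\int _\bm{T} \zeta ^k \Om _{\Theta _U} (d\zeta ) = \int _\bm{T} \zeta ^k \La _U (d\zeta )$ for all $k \geq 0$, the negative moments then agreeing automatically and the equality of all moments forcing $\Om _{\Theta _U} = \La _U$. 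This restriction to non-negative $k$ is deliberate: it is exactly what keeps the entire computation inside the analytic subspaces $H^2 _\Theta$ and $H^2 _{\Theta _U}$, where the intertwiner is available.

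First I would record the two families of non-negative moments. Since $U$ is unitary, $Z_\Theta (U)$ is unitary on $L^2 _\Theta$ and $\La _U$ is its spectral measure paired against the constants $e_i = b_i ^-$, so the $(i,j)$ entry of $\int _\bm{T} \zeta ^k \La _U (d\zeta )$ is $(Z_\Theta (U) ^k e_i , e_j ) _\Theta$; for $k \geq 0$ this equals $(Y_\Theta (U) ^k b_i ^- , b_j ^- ) _\Theta$, because $b_i ^- \in H^2 _\Theta$, $H^2 _\Theta$ is invariant for $Z_\Theta (U)$, and $Y_\Theta (U) = Z_\Theta (U) | _{H^2 _\Theta}$. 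On the other side, feeding the constant functions into the defining inner product of $L^2 _{\Theta _U}$ shows that the $(i,j)$ entry of $\int _\bm{T} \zeta ^k \Om _{\Theta _U} (d\zeta )$ is $(Z_{\Theta _U} ^k b_i ^- , b_j ^- ) _{\Theta _U}$ (here the $b_i ^-$ are the constants in $L^2 _{\Theta _U}$), which for $k \geq 0$ equals $(Y_{\Theta _U} ^k b_i ^- , b_j ^- ) _{\Theta _U}$. The problem is thereby reduced to showing $(Y_{\Theta _U} ^k b_i ^- , b_j ^- ) _{\Theta _U} = (Y_\Theta (U) ^k b_i ^- , b_j ^- ) _\Theta$ for every $k \geq 0$.

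Next I would bring in $W_U := V_\Theta ^* V_{\Theta _U}$. Because $V_\Theta$ and $V_{\Theta _U}$ are onto isometries and $K^2 _{\Theta _U} = K^2 _\Theta$, $W_U$ is a unitary of $H^2 _{\Theta _U}$ onto $H^2 _\Theta$; and taking adjoints in Lemma \ref{lemming:intertwiner} and using unitarity of $W_U$ gives $W_U Y_{\Theta _U} = Y_\Theta (U) W_U$, hence $W_U Y_{\Theta _U} ^k = Y_\Theta (U) ^k W_U$ for all $k \geq 0$. The last ingredient is that $W_U$ sends each constant to the corresponding constant, $W_U b_i ^- = b_i ^-$. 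This holds because $\Theta (0) = 0 = \Theta _U (0)$, so the point-evaluation functions $\delta _0 ^i = \Delta _0 e_i$ in both $K^2 _\Theta$ and $K^2 _{\Theta _U}$ are literally the constant $e_i$, and evaluating (\ref{eq:Vconst}) at $a=0$ gives $V_\Theta b_i ^- = \delta _0 ^i = e_i = V_{\Theta _U} b_i ^-$; applying the unitary $V_\Theta ^*$ yields $W_U b_i ^- = b_i ^-$. Combining these facts, $(Y_{\Theta _U} ^k b_i ^- , b_j ^- ) _{\Theta _U} = (W_U Y_{\Theta _U} ^k b_i ^- , W_U b_j ^- ) _\Theta = (Y_\Theta (U) ^k b_i ^- , b_j ^- ) _\Theta$, so all non-negative moments agree and $\Om _{\Theta _U} = \La _U$; as a byproduct $W_U$ realizes a unitary equivalence of $Y_{\Theta _U}$ with $Y_\Theta (U) = Z_\Theta (U) | _{H^2 _\Theta}$.

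The step I expect to be the main obstacle is the one I have arranged to sidestep: the intertwiner $W_U$ lives only on the analytic subspaces, and $Y_\Theta (U) ^*$ need not coincide with $Z_\Theta (U) ^* | _{H^2 _\Theta}$ — equivalently, $H^2 _\Theta$ need not reduce the unitary $Z_\Theta (U)$ when $\Theta$ fails to be extreme — so one cannot simply transport all integer moments through $W_U$. Matching only the moments with $k \geq 0$, where the forward powers of $Y$ stay inside $H^2$, and then recovering the negative moments from the Hermitian symmetry of the positive matrix measures, is precisely what lets the argument close without any extremality hypothesis on $\Theta$.
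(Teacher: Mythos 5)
Your proposal is correct, and it completes the proof by a genuinely different route from the paper's. The common starting point is Lemma \ref{lemming:intertwiner}, together with the observation (which the paper also makes, via equation (\ref{eq:Vconst}) and $\Theta (0) = 0 = \Theta _U (0)$) that $W_U = V_\Theta ^* V_{\Theta _U}$ is unitary and fixes the constant functions. From there the paper goes through dilation theory: it first proves, by a density argument, that $Z_{\Theta _U} ^*$ and $Z_\Theta (U) ^*$ are the \emph{minimal} unitary dilations of $Y_{\Theta _U} ^*$ and $Y_\Theta (U) ^*$, and then invokes the intertwining form of the commutant lifting theorem to extend $W_U$ to a unitary $\hat{W} _U : L^2 _{\Theta _U} \rightarrow L^2 _\Theta$ which intertwines $Z_{\Theta _U}$ with $Z_\Theta (U)$ and still maps constants to constants; the equality $\La _U (I) = \Om _{\Theta _U} (I)$ then follows for every Borel set $I$ simultaneously by conjugating the spectral projections $\chi _I (Z_\Theta (U))$ by $\hat{W} _U$. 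You sidestep both the minimality argument and commutant lifting entirely: the moments with $k \geq 0$ live inside the $Z_\Theta (U)$-invariant (but generally non-reducing) subspace $H^2 _\Theta$, where $W_U$ transports them, the negative moments come for free from the Hermitian symmetry of positive matrix measures, and moment determinacy on $\bm{T}$ closes the argument. In effect you reuse the moment-matching template of Proposition \ref{prop:acmeasure}, with Lemma \ref{lemming:intertwiner} doing the work of the combinatorial recursion, which makes your route more elementary and uniform in $\Theta$ (the paper, by contrast, remarks separately that dilation theory becomes unnecessary when $\Theta$ is extreme). What the paper's heavier argument buys is a conclusion stronger than the proposition itself: the unitary equivalence of $Z_\Theta (U)$ with $Z_{\Theta _U}$ on all of $L^2$, recorded in Remark \ref{subsubsection:equiv} and used later in the proof of Proposition \ref{prop:evalues}. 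Your proof does not yield that equivalence directly; it would have to be recovered afterwards, for instance from $\Om _{\Theta _U} = \La _U$ together with the density of $\bigvee _{k \in \bm{Z}} Z_\Theta (U) ^k \mf{D} _-$ in $L^2 _\Theta$, so if your argument replaced the paper's, that remark would need a separate justification.
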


\begin{proof}
    Clearly the claim holds for $U = \bm{1}$. Now suppose $U \neq \bm{1}$. Recall that $H^2 _\Theta$ is invariant for $Z_\Theta (U)$ and that
$Y _\Theta (U) := Z_\Theta (U) | _{H^2 _\Theta}$. By the previous lemma, there is a unitary operator $W_U$ which intertwines $Y_U ^* := Y_{\Theta _U} ^*$
and $Y (U) ^* := Y _\Theta (U) ^*$. Since $H^2 _\Theta$ is invariant for $Y (U)$, it is semi-invariant for $Y^* (U)$, for any $U \in \mc{U} (n)$.
Recall here that a subspace $S$ of a Hilbert space $\mc{H}$ is said to be semi-invariant for a semigroup of operators $\mf{S}$ if $S = S_1 \ominus S_2$
where $S_1 \supset S_2$ are invariant subspaces for $\mf{S}$. If $S$ is semi-invariant for the semigroup $\mf{S}$, then the compression of $\mf{S}$
to $S$ is a semigroup of operators on $S$ \cite{Sarason}.

Moreover it is not hard to show that $Z^* _\Theta (U)$ is the minimal unitary dilation of $Y^* (U)$. To prove this, it suffices to show that the linear
span of $Z (U) ^{-k} H^2 _\Theta $, for $k \in \bm{Z}$ is dense in $L^2 _\Theta$. Recall that $P$ projects onto the constant functions in $H^2 _\Theta$.
Now $\ran {Z (U) ^{-1} P = Z^{-1} U^* P} \supset \ran{Z^{-1} P}$, and $Z (U) ^{-2} P = Z^{-2} P + Z^{-1} P (U^* -\bm{1} ) P Z^{-1} P $. Since the range
of the second
term is contained in $\ran{Z^{-1} P} \subset \ran{Z^{-1} (U) P}$, it follows that the range of $Z ^{-2} P$ is contained in the closed linear span of the
ranges of $Z^{-2} (U) P$ and $Z^{-1} (U) P$. Continuing
in this fashion we get that $\bigvee _{k \in \bm{Z}} \ran{Z^k P} \subset \bigvee _{k \in \bm{Z}} \ran{Z ^k (U)}$. Since the first set is dense in $L^2 _\Theta$, so is the second so that $Z^* (U)$ acting on $L^2 _\Theta$ is indeed the minimal unitary dilation of $Y^* (U)$ acting on $H^2 _\Theta$. The same argument shows
that $Z^* _U$ is the minimal unitary dilation of $Y^* _U$.

Since there is a unitary $W_U$ intertwining $Y^* (U)$ and $Y^* _U$, the intertwiner version of the commutant lifting theorem \cite[pg. 66]{Paulsen} implies that there
is a unitary $\hat{W} _U : L^2 _{\Theta _U} \rightarrow L^2 _\Theta$ such that $\hat{W} _U | _{H^2 _{\Theta _U}} = W_U$ and such that
$\hat{W} _U Z ^* _U = Z^* (U) \hat{W}$. If $P_U$ denotes the projector onto the constant functions in $H^2 _{\Theta _U}$,
then, by construction $W_U P_U = P W_U$ since $W_U = V^* _\Theta V_{\Theta _U}$, and it is clear that $\hat{W} _U$ obeys the
same formula, $\hat{W} _U P_U = P \hat{W} _U$. In particular if $\{ b_i ^- \}$ is the canonical ON basis of $\mf{D} _-$ in $H^2 _\Theta$
and $\{ \beta _i ^- \}$ is the corresponding basis in $H^2 _{\Theta _U}$, then $\hat{W} _U \beta _i ^- = b_i ^-$. It follows that for any Borel set $I \subset \bm{T}$, $ [\Lambda _U (I) ] _{ij}
:= [ \left( \chi _I (Z (U) ) b _i ^- , b _j ^- \right) _\Theta ] =  [\left(  \chi _I (Z_U ) \hat{W} _U ^* b_i ^- ,  \hat{W} _U ^* b_j ^- \right) _{\Theta _U} ] =
[\left( \chi _I (Z_U ) \beta _i ^- , \beta _j ^- \right) _{\Theta _U} ] = [\Om _U ] _{ij}$,
where the last equality follows from the fact that $Z_U$ is multiplication by the independent variable in $L^2 _{\Theta _U}$.

Note that if $\Theta $ is extreme so that $L^2 _\Theta = H^2 _\Theta$, then the above argument simplifies. In particular in this case
$Z^* _U = Y^* _U$ and we have no need to use dilation theory.

\end{proof}

\subsubsection{Remark} By the proof of the above proposition, $Z_{\Theta _U}, Z_\Theta (U)$ are the minimal unitary dilations
of $Y_{\Theta _U}$ and $Y_\Theta (U)$, respectively. By Lemma \ref{lemming:intertwiner}, there is a unitary operator $W_U$ intertwining
$Y_{\Theta _U}$ and $Y_\Theta (U)$. The above proof shows that there is a unitary $\hat{W} _U : L^2 _\Theta \rightarrow L^2 _{\Theta _U}$ which intertwines $Z_\Theta (U)$ and $Z_{\Theta _U}$ and satisfies $\hat{W} _U | _{H^2 _\Theta} = W_U$.
\label{subsubsection:equiv}

Recall that the earlier Proposition \ref{prop:acmeasure} established the more general statement that $\Om _{\Theta _A} = \La _A$ for any
$A \in  \ov{ \left( \bm{M} _n \right) _1 }$. This more general fact is not needed to prove the disintegration theorem. In the proof
of the disintegration theorem, one simply needs to show that $\Om _{\Theta _U} = \La _U $ for $U \in \mc{U} (n)$, as shown in
the above proposition, Proposition \ref{prop:weakprop}, as well as the fact that $\Om _0 = \La _0 = m$. Below we provide
a proof of this fact which does not rely on the methods of Subsection \ref{subsection:ACmeas}, so that the disintegration theorem, Theorem \ref{thm:disintegrate} as given in Subsection \ref{subsection:Weylint} can be proven completely using the results of this section
instead of those of Subsection \ref{subsection:ACmeas}.

\begin{lemming}
    $\Om _0 = \La _0 =m$.
\end{lemming}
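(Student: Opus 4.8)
The plan is to reduce the statement to a single moment computation. The equality $\Om _0 = m$ has in fact already been obtained in Subsection \ref{subsection:Weylint}: taking $A=0$ in the Herglotz representation (\ref{eq:glotz}) gives $\bm{1} _n = \int _\bm{T} \frac{\zeta +z}{\zeta -z} \Om _0 (d\zeta )$, while the same integral against $m$ also returns $\bm{1} _n$, so uniqueness of the representing measure forces $\Om _0 = m$. This argument uses only the Herglotz theorem and none of the recurrence machinery of Subsection \ref{subsection:ACmeas}, so it remains available, and it suffices to prove $\La _0 = m$. Note that Proposition \ref{prop:weakprop} handles only unitary $U$; since $A=0$ is not unitary, $Z_\Theta (0)$ is a completely non-unitary partial isometry and $\La _0$ must be taken as the compression $\La _0 (I) = P_0 \chi _I (U_0) P_0$ of the spectral measure of the minimal unitary dilation $U_0$ of $Z_\Theta (0)$ on $\mc{K} _0 \supset L^2 _\Theta$, with matrix entries $\La _0 (I) _{ij} = ( \chi _I (U_0) b _i ^- , b _j ^- ) _\Theta$.

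A positive matrix-valued measure on $\bm{T}$ is determined by its moments, so I would compute $\int _\bm{T} \zeta ^k \La _0 (d\zeta )$ and compare with the moments $\delta _{k,0} \bm{1} _n$ of $m$. By the power-dilation property $P_0 U_0 ^k P_0 = Z_\Theta (0) ^k$ for $k \geq 0$ (and its adjoint for $k<0$), and because $b _i ^- , b _j ^- \in L^2 _\Theta$, these moments reduce to inner products on $L^2 _\Theta$ itself: $( Z_\Theta (0) ^k b _i ^- , b _j ^- ) _\Theta$ for $k \geq 0$ and $( (Z_\Theta (0) ^* ) ^{|k|} b _i ^- , b _j ^- ) _\Theta$ for $k < 0$.

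The key observation is that $Z_\Theta (0)$ annihilates the constant functions through its adjoint. Since the operator $\hat{\bm{1}} _n$ associated to the matrix $\bm{1} _n$ is exactly $P_-$, the defining formula $Z_\Theta (0) = Z_\Theta + P_- (0 - \bm{1} _n) P_- Z_\Theta$ collapses to $Z_\Theta (0) = (\bm{1} - P_-) Z_\Theta$; taking adjoints and using unitarity of $Z_\Theta$ on $L^2 _\Theta$ gives $Z_\Theta (0) ^* = Z_\Theta ^{-1} (\bm{1} - P_-)$, which vanishes on $\mf{D} _- = \ran{P_-}$. Hence $Z_\Theta (0) ^* b _i ^- = 0$ for every $i$. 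Consequently every negative moment vanishes, every positive moment vanishes as well since $( Z_\Theta (0) ^k b _i ^- , b _j ^- ) _\Theta = ( b _i ^- , (Z_\Theta (0) ^* ) ^k b _j ^- ) _\Theta = 0$ for $k \geq 1$, and the zeroth moment is $( b _i ^- , b _j ^- ) _\Theta = \delta _{ij}$ because $\Theta (0) = 0$ forces $\Om _\Theta (\bm{T}) = \bm{1} _n$ and makes the $b _i ^-$ orthonormal. These are precisely the moments of $m$, so $\La _0 = m$, and combined with $\Om _0 = m$ this proves $\Om _0 = \La _0 = m$.

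I do not expect a serious obstacle here: the entire content is the single identity $Z_\Theta (0) ^* b _i ^- = 0$ together with the standard fact that matrix-valued measures on $\bm{T}$ are determined by their moments. The only points requiring care are the correct invocation of the power-dilation identity $P_0 U_0 ^k P_0 = Z_\Theta (0) ^k$ to pass from $U_0$ on $\mc{K} _0$ down to $Z_\Theta (0)$ on $L^2 _\Theta$, and the simplification $\hat{\bm{1}} _n = P_-$ that reduces $Z_\Theta (0)$ to $(\bm{1} - P_-) Z_\Theta$; neither of these relies on the combinatorial lemmas of Subsection \ref{subsection:ACmeas}, which is exactly what the setting demands.
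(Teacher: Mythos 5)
Your proposal is correct and follows essentially the same route as the paper: $\Om_0 = m$ by uniqueness in the Herglotz representation, and $\La_0 = m$ by comparing moments, which reduce via the dilation property to inner products $\left( Z_\Theta(0)^k b_i^-, b_j^- \right)_\Theta$ and vanish for $k \neq 0$. The paper merely asserts that only the zeroth moment survives; your identity $Z_\Theta(0)^* = Z_\Theta^{-1}(\bm{1} - P_-)$, hence $Z_\Theta(0)^* b_i^- = 0$, is exactly the justification it leaves implicit.
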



\begin{proof}
    That $\Om _0 =m$ follows from the uniqueness of the Herglotz representation as described before the statement
of Theorem \ref{thm:disintegrate}.

By definition $\int _\bm{T} \zeta ^k [ \Lambda _0 (d\zeta ) ] _{ij}$ evaluates to $\left( Z (0) ^k b_i ^- , b_j ^- \right) _\Theta$
if $k\geq 0$ and to $\left( ( Z(0) ^* )^k b_i ^- , b_j ^- \right) _\Theta $ if $k \leq 0$.  The only non-vanishing moment occurs when
$k=0$ in which case this evaluates to $( b ^- _i , b ^- _j ) _\Theta
= \delta _{ij}$. This proves that $m = \La _0$ since they have the same moments.
\end{proof}

\section{Total orthogonal sets of point evaluation vectors}

\label{section:rkhssamp}

    If $\Theta$ is scalar-valued, necessary and sufficient conditions for the point evaluation vectors
$\delta _\zeta (z) := \frac{1 - \Theta (z) \ov{\Theta (\zeta)}}{1-z\ov{\zeta}} $ to belong to $K^2 _\Theta$
in the case where $\zeta \in \bm{T}$ can be given in terms of the existence of the Carath$\mr{\acute{e}}$odory
angular derivative (CAD) of $\Theta$ at $\zeta$ \cite[VI-4]{Sarason-dB}.  In \cite{Clark-perturb} (for inner $\Theta$)
and \cite{Fricain}, it is shown that $K^2 _\Theta$ has a total orthogonal set of point evaluation
vectors if and only if there is a $\zeta \in \bm{T}$ for which the measure $\Om _{\Theta _\zeta}$ is purely atomic. It is easy
to show that if $\{ \delta _{\la _n} \} _{n \in \bm{Z}}$ is a total orthogonal set in $K^2 _\Theta$, then $\{ \la _n \} \subset \bm{T}$.

This section will verify that these results generalize straightforwardly to the case where $\Theta $ is matrix-valued. To
accomplish this, it will first be useful to show how the theorems of \cite[Chapter VI]{Sarason-dB} on angular
derivatives extend to the matrix-valued case.

\subsection{Caratheodory angular derivatives}

    Let $\Theta$ be purely contractive. There is no need to assume that $\Theta (0) = 0$ in this subsection. The analytic
function $\Theta$ is said to have a Carath$\mr{\acute{e}}$odory angular derivative (CAD) at $\zeta \in \bm{T}$ if $\Theta$ has
a non-tangential limit $\Theta (\zeta )$ at $\zeta$, $| \Theta (\zeta ) | =1$, and the non-tangential limit of $\Theta '$
at $\zeta $ exists. In this case the CAD of $\Theta $ at $\zeta$ is defined as the limit of $\Theta ' (z)$ as $z\rightarrow \zeta$
non-tangentially, and is denoted by $\Theta ' (\zeta )$.

It is fairly easy to generalize \cite[VI-4]{Sarason-dB} to prove the following:

\begin{thm}
    If $\Theta \in \left( H^\infty _{\bm{M} _n} (\bm{D} ) \right) _1$ and $\zeta
\in \bm{T}$, the following are equivalent:
\bn
    \item $\Theta$ has a CAD at $\zeta \in \bm{T}$.
    \item $c _\zeta := \lim \inf _{z \stackrel{nt}{\rightarrow} \zeta} \| \frac{\bm{1} - \Theta (z) \Theta (\zeta ) ^* }{1-|z| ^2} \| < \infty$.
    \item There is a $U \in \mc{U} (n)$ such that $\frac{\Theta (z) - U}{z - \zeta} \vec{x} \in K^2 _\Theta$ for
    all $\vec{x} \in \bm{C} ^n$.
    \item Every element of $K^2 _\Theta$ has a non-tangential limit at $\zeta$.
\en
    If the above conditions hold then $\delta _\zeta ^{\vec{x}} \in K^2 _\Theta$ for any $\vec{x} \in \bm{C} ^n$,
if $f \in K^2 _\Theta$ then $\ip{f}{\delta_\zeta ^{\vec{x}}} _\Theta = (f (z) , \vec{x} )$ and $\delta _\zeta ^{\vec{x}}$ is the norm limit of $\delta_z ^{\vec{x}}$ as $z \stackrel{nt}{\rightarrow} \zeta$.
Moreover, $\Theta ' (\zeta ) = \ov{\zeta} A \Theta (\zeta )$ where $A > 0$
(so that $\Theta ' (\zeta )$ is invertible) and $\frac{ \bm{1} - \Theta (z) \Theta (z) ^*}{1 - |z| ^2 }$ converges
to $A$ as $z$ approaches $\zeta $ non-tangentially.

\label{thm:angderv}
\end{thm}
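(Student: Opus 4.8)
The plan is to transcribe the scalar treatment of \cite[Chapter VI]{Sarason-dB} into the operator-valued setting, working throughout with the matrix kernel $\Delta _w (z) = \frac{\bm{1} - \Theta (z) \Theta (w) ^*}{1 - z\ov{w}}$ and its reproducing property $\ip{f}{\delta _z ^{\vec{x}}} _\Theta = (f(z) , \vec{x})$. Two elementary identities drive everything: the diagonal norm formula $\| \delta _z ^{\vec{x}} \| _\Theta ^2 = (\Delta _z (z) \vec{x} , \vec{x})$ with $\Delta _z (z) = \frac{\bm{1} - \Theta (z) \Theta (z) ^*}{1 - |z| ^2} \geq 0$, and the difference formula
\be \| \delta _z ^{\vec{x}} - \delta _w ^{\vec{x}} \| _\Theta ^2 = (\Delta _z (z) \vec{x} , \vec{x}) - (\Delta _z (w) \vec{x} , \vec{x}) - (\Delta _w (z) \vec{x} , \vec{x}) + (\Delta _w (w) \vec{x} , \vec{x}), \ee
obtained by polarizing and using the reproducing property. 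A short computation using $\bm{1} - \Theta (z) U ^* = -(\Theta (z) - U) U^*$ and $1 - z\ov{\zeta} = -\ov{\zeta}(z - \zeta)$ shows that the candidate boundary kernel satisfies $\delta _\zeta ^{\vec{x}} = \zeta \frac{\Theta - U}{z - \zeta} U ^* \vec{x}$ for $U = \Theta (\zeta)$, so that condition (3) is exactly the assertion that $\delta _\zeta ^{\vec{x}} \in K^2 _\Theta$ for all $\vec{x}$. I would organize the equivalences as the cycle $(2) \Rightarrow (1) \Rightarrow (3) \Rightarrow (4) \Rightarrow (2)$, producing the stated boundary kernel and quantitative conclusions en route.

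The implication $(2) \Rightarrow (1)$ is the matrix Julia--Carath\'eodory step and the main obstacle. Writing $U := \Theta (\zeta)$ and $N_z := \frac{\bm{1} - \Theta (z) U^*}{1 - |z|^2}$, finiteness of $c_\zeta$ forces the numerator $\bm{1} - \Theta (z) U^* \to 0$ nontangentially, so $U$ is unitary, while the exact identity $\Theta (z) = U - (1-|z|^2) N_z U$ yields $\Delta _z (z) = 2\,\re{N_z} - (1-|z|^2) N_z N_z ^*$ and hence boundedness of the diagonal kernel $\Delta _{z_k}(z_k)$ along the realizing sequence. The heart is then the two-point positivity of the reproducing kernel: the block matrix $\left(\begin{smallmatrix} \Delta _z (z) & \Delta _{z_k} (z) \\ \Delta _z (z_k) & \Delta _{z_k} (z_k) \end{smallmatrix}\right)$ is positive, and the Cauchy--Schwarz inequality for this positive block form, taken as $z_k \to \zeta$ (so that $\Delta _{z_k}(z_k) \to A$ and $\Delta _{z_k}(z) \to \Delta _\zeta (z) := \frac{\bm{1} - \Theta (z) U^*}{1 - z\ov{\zeta}}$), yields the operator horocyclic inequality $|(\Delta _\zeta (z) \vec{v}, \vec{u})| ^2 \leq (\Delta _z (z)\vec{u},\vec{u})\,(A\vec{v},\vec{v})$ for all $\vec{u}, \vec{v} \in \bm{C} ^n$. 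With $\Delta _z (z)$ bounded this forces $\Delta _\zeta (z)$, and hence the difference quotient $\frac{\Theta (z) - U}{z - \zeta}$, to remain bounded nontangentially; a Lindel\"of-type argument then upgrades boundedness to existence of the nontangential limit of $\Theta '$, giving the CAD. The nontangential convergence $\Delta _z (z) \to A \geq 0$ also drops out, and inserting the expansion $\Theta (z) = U + \Theta ' (\zeta)(z - \zeta) + o(z-\zeta)$ into $\Delta _z (z)$ yields $\Theta ' (\zeta) = \ov{\zeta} A \Theta (\zeta)$.

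For the remaining arc, $(1) \Rightarrow (3)$ follows the scalar computation of \cite[Chapter VI]{Sarason-dB}: using the CAD bound one checks that $\frac{\Theta - U}{z - \zeta} \vec{x}$ has finite $K^2 _\Theta$ norm, i.e.\ lies in $\ran{R_\Theta}$ with the complementary-space norm, and the first-paragraph identity shows this is $\delta _\zeta ^{\vec{x}}$ up to an invertible constant factor. Then $(3) \Rightarrow (4)$ comes from norm convergence $\delta _z ^{\vec{x}} \to \delta _\zeta ^{\vec{x}}$: the difference formula together with $\Delta _z (z) \to A$ shows $\{ \delta _z ^{\vec{x}} \}$ is norm Cauchy, so for every $f \in K^2 _\Theta$ the values $(f(z), \vec{x}) = \ip{f}{\delta _z ^{\vec{x}}} _\Theta$ converge to $\ip{f}{\delta _\zeta ^{\vec{x}}} _\Theta$, giving both the nontangential limit and the boundary reproducing identity. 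Finally $(4) \Rightarrow (2)$ is Banach--Steinhaus: if every $f$ has a nontangential limit at $\zeta$ then $\sup _k |\ip{f}{\delta _{z_k} ^{\vec{x}}} _\Theta| < \infty$ for each $f$, whence $\sup _k \| \delta _{z_k} ^{\vec{x}} \| _\Theta ^2 = \sup _k (\Delta _{z_k} (z_k) \vec{x} , \vec{x}) < \infty$; extracting a unitary limit of $\Theta (z_k)$ and running the comparison of the second paragraph backwards returns $c_\zeta < \infty$.

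The genuinely new work, compared with transcribing Sarason, is concentrated in the second paragraph: establishing the operator-valued horocyclic inequality by the positive-block/Schur route, which replaces the scalar hyperbolic-geometry argument and sidesteps non-commutativity, and then extracting the \emph{strict} positivity $A > 0$ rather than mere $A \geq 0$. For the latter I would use the lower bound furnished by the horocyclic inequality together with the fact that $(1-|z|^2)^{-1}(\bm{1} - \Theta (z) ^* \Theta (z))$ stays bounded away from a degenerate direction, ensuring that $\Theta ' (\zeta) = \ov{\zeta} A \Theta (\zeta)$ is invertible. The passage from a subsequential unitary limit to a full nontangential limit, and the interchange of nontangential limits with the operator expressions appearing in the Schur complements, are the points that will demand the most care.
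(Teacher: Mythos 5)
Your architecture coincides with the paper's: transcribe Sarason's Chapter VI argument around the cycle $(2)\Rightarrow(1)\Rightarrow(3)\Rightarrow(4)\Rightarrow(2)$, with new work only where non-commutativity bites. Your block-positivity treatment of $(2)\Rightarrow(1)$ is sound, and in fact gives $A>0$ more cleanly than you indicate: if $(A\vec{v},\vec{v})=0$, the horocyclic inequality forces $\Delta_\zeta(z)\vec{v}=0$ for all $z$, i.e.\ $\Theta(z)U^*\vec{v}=\vec{v}$, contradicting $\|\Theta(z)\|<1$ on $\bm{D}$. The genuine gap is in your $(3)\Rightarrow(4)$ step: you derive norm-Cauchyness of $\delta_z^{\vec{x}}$ from ``the difference formula together with $\Delta_z(z)\to A$''. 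But $\Delta_z(z)\to A$ is an output of your Julia--Carath\'eodory analysis, i.e.\ it is established under hypothesis (2); under hypothesis (3) alone you do not have it (and even granting it, the difference formula also requires the off-diagonal term $(\Delta_z(w)\vec{x},\vec{x})$ to converge as $z,w\to\zeta$ jointly, which is again a consequence of the full CAD statement (1), not of (3)). In a cyclic proof each implication must stand on its own hypothesis, so what you have actually shown is $(1)\wedge(3)\Rightarrow(4)$; the chain then yields $(4)\Rightarrow(2)\Rightarrow(1)\Rightarrow(3)$ and $(1)\Rightarrow(4)$, but nothing follows from (3) alone, and (3) is left as a consequence of the other conditions rather than an equivalent one.

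This is precisely the one step the paper singles out as requiring a new argument, and its fix — a Dym-type Cauchy--Schwarz bound — uses only (3). Membership of $\frac{\Theta(z)-U}{z-\zeta}\vec{x}$ in $H^2_n(\bm{D})$ forces $\Theta(z)\vec{x}\to U\vec{x}$ nontangentially (since $\|g(z)\|\le\|g\|(1-|z|^2)^{-1/2}$ for $g\in H^2_n$), with $U$ unitary; then for every $\vec{x}\in\bm{C}^n$,
\be 0\le \left((\bm{1}-\Theta(z)\Theta(z)^*)\vec{x},\vec{x}\right)+\|(U^*-\Theta(z)^*)\vec{x}\|^2 = 2\re{\left((\bm{1}-\Theta(z)U^*)\vec{x},\vec{x}\right)},\ee
and since $\left((\bm{1}-\Theta(z)U^*)\vec{x},\vec{x}\right)=(1-z\ov{\zeta})\ip{\delta_\zeta^{\vec{x}}}{\delta_z^{\vec{x}}}_\Theta$ when $\delta_\zeta^{\vec{x}}\in K^2_\Theta$ (hypothesis (3)), this gives
\be \|\delta_z^{\vec{x}}\|_\Theta^2\le \frac{2|1-z\ov{\zeta}|}{1-|z|^2}\,\|\delta_\zeta^{\vec{x}}\|_\Theta\,\|\delta_z^{\vec{x}}\|_\Theta ,\ee
so $\|\delta_z^{\vec{x}}\|_\Theta$ is bounded on each nontangential approach region. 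Boundedness plus convergence of the inner products against the dense span of the kernels $\delta_w^{\vec{y}}$, $w\in\bm{D}$, yields \emph{weak} convergence $\delta_z^{\vec{x}}\to\delta_\zeta^{\vec{x}}$, and weak convergence already proves (4): $(f(z),\vec{x})=\ip{f}{\delta_z^{\vec{x}}}_\Theta\to\ip{f}{\delta_\zeta^{\vec{x}}}_\Theta$ for every $f\in K^2_\Theta$. (Your own two-point positivity trick applied to the Gram matrix of $\{\delta_z^{\vec{u}},\delta_\zeta^{\vec{v}}\}$ produces the same bound, so the repair stays inside your toolkit.) The norm convergence asserted in the theorem's conclusions should be deduced only after the equivalences are in place, from weak convergence together with $\|\delta_z^{\vec{x}}\|_\Theta^2=(\Delta_z(z)\vec{x},\vec{x})\to(A\vec{x},\vec{x})=\|\delta_\zeta^{\vec{x}}\|_\Theta^2$; it cannot be an ingredient of $(3)\Rightarrow(4)$. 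A similar ordering caution applies in your $(2)\Rightarrow(1)$: nontangential boundedness of $\Delta_z(z)$ is not known a priori (only along the realizing sequence) and must itself be extracted from the horocyclic inequality combined with $(1-|z|^2)(\Delta_z(z)\vec{u},\vec{u})\le 2|1-z\ov{\zeta}|\,|(\Delta_\zeta(z)\vec{u},\vec{u})|$; the tool is right, but as written the deduction runs backwards.
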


    In the above $z \stackrel{nt}{\rightarrow} \zeta$ denotes the non-tangential convergence of $z \in \bm{D}$ to
$\zeta \in \bm{T}$. The above theorem can be proven by following the proof for the case of scalar $\Theta$. The only part of the
proof which could be considered slightly more complicated is the proof that if (3) holds, then $\delta_z ^{\vec{x}} $
converges to $\delta _\zeta ^{\vec{x}}$ weakly, which is used in the proof that (3) $\Rightarrow$ (4).
We will show how this is accomplished and omit the rest of the proof.

As in the proof of \cite[VI-4]{Sarason-dB}, to show that $\delta_z ^{\vec{x}}$ converges weakly to $\delta _\zeta ^{\vec{x}}$
it suffices to show that the functions $\delta_z ^{\vec{x}}$ are bounded in norm as $z$ approaches $\zeta$ non-tangentially.
To show this it suffices to show that $\Delta _z$ is bounded in norm in this limit where $\Delta _w : \bm{C}^n
\rightarrow K^2 _\Theta$ is the linear map defined by $\Delta _w \vec{x} (z) := \Delta _w (z) \vec{x} = \delta_w ^{\vec{x}} (z)$.
This follows from an argument that can be found in the proof of \cite[Lemma 8.3]{Dym}: Consider \ba 0 & \leq & \left( (\bm{1} - \Theta (z) \Theta (z) ^* ) \vec{x} , \vec{x} \right) + \left(
(\Theta (\zeta ) ^* - \Theta (z) ^* ) \vec{x} , (\Theta (\zeta ) ^* - \Theta (z) ^* ) \vec{x} \right) \nonumber \\
& = & \left( (\bm{1} - \Theta (z) \Theta (\zeta ) ^* ) \vec{x} , \vec{x} \right) +  \left( \vec{x}, (\bm{1} - \Theta (z) \Theta (\zeta ) ^* ) \vec{x} \right) ,\ea and observe that both terms on the right hand side of the inequality on the first line are positive.
Recall that $\Delta _z (w) = \frac{\bm{1} - \Theta (w) \Theta (z) ^*}{1-w\ov{z}}$.
It follows that
\ba \| \Delta _z \vec{x} \| ^2 _\Theta & \leq & \frac{1 - z \ov{\zeta}}{1 - | z | ^2 } \ip{\Delta _\zeta \vec{x}}{\Delta _z \vec{x}} _\Theta
+ \frac{1 -\ov{z} \zeta}{1-|z| ^2} \ip{\Delta _z \vec{x}}{\Delta _\zeta \vec{x} } _\Theta \nonumber \\
& \leq &  2 \frac{| 1- z \ov{\zeta} |}{1 - |z| ^2} \| \Delta _\zeta \vec{x} \| _\Theta \| \Delta _z \vec{x} \| _\Theta .\ea This inequality
shows that $\Delta _z $ is bounded in norm as $z$ approaches $\zeta$ non-tangentially.

\subsubsection{Remark} More generally, given $\vec{x} \in \bm{C} ^n$ we will say that $\Theta \vec{x}$ has a CAD at $\zeta \in \bm{T}$ if $\Theta \vec{x}$ has
a non-tangential limit $\Theta (\zeta ) \vec{x}$ at $\zeta$, $\| \Theta (\zeta ) \vec{x} \| = \| \vec{x} \| $, and $\Theta ' \vec{x}$ has
a non-tangential limit at $\zeta$. One can prove a version of the above theorem for such vector functions. We will
not write this result down here, but we note that one can show that $\delta_\zeta ^{\vec{x}} \in K^2 _\Theta$ if and only if there is a unitary $U$ such that
$\Theta U^* \vec{x}$ has a CAD at $\zeta$. \label{subsubsection:vecangderv}

\subsection{Spectra of the unitary perturbations $Z _\Theta (U)$}

\label{subsection:spectra}

    Earlier we defined $Z_\Theta ' := Z_\Theta | _{L^2 _\Theta \ominus \mf{D} _+}$. This is clearly an isometric linear transformation
from $L^2 _\Theta \ominus \mf{D} _+$ onto $L^2 _\Theta \ominus \mf{D} _-$.
The deficiency indices of an isometric linear transformation $V$ are defined as $(n_+ , n_-)$ where $n_+ := \dim{ \dom{V} ^\perp}$
and $n_- := \dim{\ran{V} ^\perp}$. If $\Theta$ has rank $n$, it follows that the deficiency indices of $Z_\Theta '$ are $(n,n)$.
An isometric linear transformation is called simple if it has no unitary restriction to a proper subspace. It is easy to see
that $Z_\Theta '$ is simple, as if it were not, then $Z_\Theta $ would have a reducing subspace orthogonal to $\mf{D} _- = \{ 1/z e_i \}$,
which, as discussed at the beginning of Section \ref{subsection:ACmeas} is not possible. A point $\la \in \bm{C} $ is called regular for an isometric linear transformation $V$ if $ V-\la$ is bounded below. $V$ is
called regular if every $\la \in \bm{C} \sm \{ 1 \}$ is regular for $V$ (\emph{i.e.} if every $\la \in \bm{C}$ is regular for the symmetric
linear transformation $S = \mu ^{-1} (V)$ defined on $\ran{V-\bm{1}}$) where $\mu (z) = \frac{z-i}{z+i}$.

As proven by Lifschitz in \cite{Lifschitz2}, any simple isometric linear transformation $V$ with indices $(n,n)$ is unitarily equivalent
to $Z_\Theta '$ for some purely contractive $\Theta$ with $\Theta (0) =0$. The following theorem characterizes the essential spectrum of $Z_\Theta '$ (and hence of $Z_\Theta$) \cite[Theorem 4]{Lifschitz2}

\begin{thm}{ (Lifschitz)}
    A point $\zeta \in \bm{T}$ is a regular point of $Z_\Theta '$ if and only if both of the following conditions are satisfied:
    \bn
        \item $\Theta$ is analytic on some open neighbourhood of $\zeta$.
        \item There is a neighbourhood $N_\zeta$ of $\zeta$ such that $\Theta (\la)$ is
        unitary for all $\la \in N_\zeta \cap \bm{T}$.
    \en

    \label{thm:esspec}
\end{thm}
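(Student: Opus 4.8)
The plan is to reduce the regularity of $\zeta \in \bm{T}$ for the simple isometric transformation $V := Z_\Theta'$ to a statement about the spectral measures of its unitary extensions $Z_\Theta(U)$, $U \in \mc{U}(n)$, and then to read conditions (1) and (2) off the analytic and boundary behaviour of $\Theta$. First I would note that, because $V$ is an isometry, $V - \la$ is automatically bounded below whenever $|\la| \neq 1$, so only points of $\bm{T}$ can fail to be regular. For $\zeta \in \bm{T}$ each extension $Z_\Theta(U)$ agrees with $V$ on $\dom{V} = L^2 _\Theta \ominus \mf{D} _+$, which has codimension $n$; hence $Z_\Theta(U) - \zeta$ is bounded below on $\dom{V}$ precisely when $\zeta$ is regular for $V$. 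Since $\dom{V}$ has finite codimension, being bounded below there is equivalent to $Z_\Theta(U) - \zeta$ being semi-Fredholm, i.e. to $\zeta \notin \sigma_{\mr{ess}}(Z_\Theta(U))$. As the extensions differ by the rank-$n$ perturbations $P_-(U - \bm{1})P_- Z_\Theta$, this essential spectrum is common to all $U$, and the simplicity of $V$ removes the exceptional possibility that $\zeta$ is an eigenvalue whose eigenvector lies in $\dom{V}$. Thus the task becomes: $\zeta$ is regular for $V$ $\iff$ $\zeta \notin \sigma_{\mr{ess}}(Z_\Theta(U))$ for some (equivalently every) $U$.

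Second, I would identify this essential spectrum through the Aleksandrov--Clark measures. By Proposition \ref{prop:weakprop} the spectral measure of the unitary $Z_\Theta(U)$ is $\La_U = \Om_{\Theta_U}$, so $\zeta \notin \sigma_{\mr{ess}}(Z_\Theta(U))$ exactly when $\Om_{\Theta_U}$ has no continuous part and only finitely many atoms on some arc $N \cap \bm{T}$ around $\zeta$. The absolutely continuous part is controlled by the density $W_{\Theta_U} = (\bm{1} - \Theta_U)^{-1}(\bm{1} - \Theta_U \Theta_U ^*)(\bm{1} - \Theta_U ^*)^{-1}$; since $\Theta_U \Theta_U ^* = \Theta U^* U \Theta^* = \Theta \Theta^*$, the middle factor is independent of $U$ and vanishes exactly where $\Theta(\zeta)$ is unitary. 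Consequently the a.c. spectrum of every $Z_\Theta(U)$ sits on the set where $\Theta$ fails to be unitary, and its absence near $\zeta$ is equivalent to condition (2) holding almost everywhere on $N \cap \bm{T}$.

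Third --- and this is where the real work lies --- I would upgrade ``unitary a.e. with no continuous spectrum'' to the honest analytic statements (1) and (2). For the forward implication the delicate point is analyticity: vanishing of the a.c. density gives only $\Theta$ unitary almost everywhere, and one must exclude singular-continuous spectrum and extract a genuine analytic continuation. Here I would use the Herglotz transform $B_U(z) = (\bm{1} + \Theta(z)U^*)(\bm{1} - \Theta(z)U^*)^{-1}$, which is analytic on $\bm{D}$ and represents $\Om_{\Theta_U}$: the absence of continuous spectrum together with finitely many atoms on $N \cap \bm{T}$ forces $B_U$ to continue meromorphically across the arc, whence $\Theta U^* = (B_U - \bm{1})(B_U + \bm{1})^{-1}$ continues as well; boundedness of $\Theta$ removes the finitely many candidate poles, and a Schwarz-reflection argument (available because $\Theta$ is unitary, hence $|\Theta| = 1$, on the arc) yields analyticity of $\Theta$ across $\zeta$. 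For the reverse implication, given (1) and (2) I would choose $U$ with $1 \notin \sigma(\Theta(\zeta)U^*)$, e.g. $U = -\Theta(\zeta)$, so that $\bm{1} - \Theta U^*$ is invertible on a neighbourhood; then $B_U$ continues analytically across $N \cap \bm{T}$, which forces $\Om_{\Theta_U}$ to be absolutely continuous there with density $W_{\Theta_U} = 0$, so $\Om_{\Theta_U}(N) = 0$ and $\zeta \notin \sigma(Z_\Theta(U)) \supseteq \sigma_{\mr{ess}}(Z_\Theta(U))$, giving regularity.

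The main obstacle is the analytic-continuation step in the forward direction: translating purely spectral information (no continuous spectrum, unitarity almost everywhere) into real analyticity of the operator-valued $\Theta$ across the arc. This is precisely where Lifschitz's characteristic-function theory \cite{Lifschitz2} carries the load, and where the matrix-valued Schwarz reflection and the removal of the finitely many poles of the Herglotz transform must be handled with care; everything else is a bookkeeping consequence of the spectral identifications (Proposition \ref{prop:weakprop} and the density formula for $\Om_\Theta$) already established.
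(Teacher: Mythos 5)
The paper states this theorem without proof: it is imported directly from Lifschitz \cite[Theorem 4]{Lifschitz2}, so there is no internal argument to compare yours against, and I can only assess your proposal on its own merits. Judged that way, your outline is correct, and it has the virtue of being self-contained relative to the paper's own machinery (under the standing hypothesis $\Theta (0) = 0$, which is what makes Proposition \ref{prop:weakprop} and Remark \ref{subsubsection:equiv} available). Your spectral reduction is sound: boundedness below of $Z_\Theta (U) - \zeta$ on the finite-codimension closed subspace $\dom{Z_\Theta '}$ forces $Z_\Theta (U) - \zeta$ to be semi-Fredholm, hence Fredholm by normality, so $\zeta \notin \sigma _{\mr{ess}} (Z_\Theta (U))$; and since $Z_\Theta (U)$ is unitarily equivalent to multiplication by $z$ on $L^2 _{\Theta _U}$, this says exactly that $\Om _{\Theta _U}$ is a finite sum of atoms on some arc about $\zeta$. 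Your converse is also right: with $U = - \Theta (\zeta )$ (unitary by condition (2)), $\bm{1} - \Theta (z) U^*$ is invertible near $\zeta$ by condition (1), so $B_U$ is analytic across the arc, the measure there is absolutely continuous with density $\re{B_U} = 0$, hence $\zeta \notin \sigma (Z_\Theta (U))$ and $Z_\Theta ' - \zeta$ is trivially bounded below. Note that simplicity of $Z_\Theta '$, which you invoke, is needed only for the half of your spectral equivalence that the theorem never uses (passing from semi-Fredholmness back to boundedness below on $\dom{Z_\Theta '}$); in both directions of the actual proof it can be dropped.

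One correction of emphasis. What you flag as the main obstacle --- upgrading the spectral information to analyticity, which you propose to outsource to Lifschitz's theory plus a matrix Schwarz reflection --- is in fact already finished by your own second step, and the reflection is both unnecessary and, as stated, unavailable (at that stage you only know $\Theta$ is unitary a.e.\ on the arc, not everywhere). If $\Om _{\Theta _U}$ restricted to an arc $N \ni \zeta$ is a finite sum of atoms, the Herglotz integral itself furnishes a two-sided meromorphic continuation of $B_U$ across $N$: the integral over $\bm{T} \sm N$ is analytic in a neighbourhood of $\zeta$, and the integral over $N$ is a finite sum of rational terms with poles at the atoms. Then $\Theta U^* = (B_U - \bm{1})(B_U + \bm{1}) ^{-1}$ is meromorphic near $\zeta$, because $\det (B_U + \bm{1})$ is not identically zero (it is zero-free in $\bm{D}$, where $\re{B_U} > 0$); a pole at a point of $\bm{T}$ is impossible since $\| \Theta (z) \| \leq 1$ in $\bm{D}$ while a pole would force blow-up from inside the disc; and the remaining candidate singularities (zeros of $\det (B_U + \bm{1})$ off the arc) form a discrete set, so a smaller neighbourhood of $\zeta$ is singularity-free. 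That is condition (1), with no appeal to \cite{Lifschitz2}. Condition (2) then follows, but only after (1): vanishing of the absolutely continuous part gives $\bm{1} - \Theta (\la ) \Theta (\la ) ^* = 0$ merely a.e.\ on the arc (via the density formula, wherever its inverse factors exist), and it is the analyticity just established that upgrades this to every point of a subarc by continuity. So the logical order must be (1) first, then (2); your sketch, which feeds unitarity on the arc into the continuation argument, has this order reversed.
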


    By the above theorem the essential spectrum, $\sigma _e (Z_\Theta (U))$ of any of the unitary perturbations $Z_\Theta (U)$
is the set of all $\zeta \in \bm{T}$ which fail to satisfy at least one of the above conditions in the theorem. We will denote this
set by $\mr{sp} (\Theta)$. Assume that $\zeta \in \bm{T} \sm \mr{sp} (\Theta )$. Then $Z_\Theta (U) - \zeta $ is a finite
rank perturbation of $Z_\Theta (0) -\zeta$ which has Fredholm index $0$ since both $Z_\Theta (0)$ and its adjoint are simple.
It follows that $\sigma (Z_\Theta (U)) = \mr{sp} (\Theta) \cup
\sigma _p (Z_\Theta (U))$, where $\sigma _p (Z_\Theta (U))$ is the set of eigenvalues of $Z_\Theta (U)$. To determine
the spectrum of $Z_\Theta (U)$ it remains to determine its eigenvalues.

    It is worth noting that one can show using the basic theory of isometric/symmetric linear transformations that given a simple
isometric linear transformation $V$ with deficiency indices $(n,n)$, any eigenvalue of any unitary extension $U$ of $V$ has multiplicity
not exceeding $n$, and if $\vec{\la}$ is any point in $\bm{T} ^n$ consisting of regular points for $V$, there is a unitary extension $U$ of $V$ which has the entries of $\vec{\la}$ as eigenvalues. Moreover each distinct pair of unitary extensions $V(U)$ and $V(U')$ can share no more than $n-1$ eigenvectors.
See for example \cite[Section 83]{Glazman}

\begin{prop}
    Suppose that $\Theta (0) =0$ and $\la \in \bm{T}$.
    \bn
        \item $\la \in \sigma _p (Z_\Theta (U) ) \sm \mr{sp} (\Theta )$ if and only if $\ker{\Theta (\la ) ^* - U^*} \neq \emptyset$.
         A vector $\vec{x} \in \bm{C} ^n$ belongs to $ \ker{\Theta (\la) ^* - U^*}$ if and only if $\delta _{\{ \la \} } \vec{x}$ is an eigenvector of $Z_{\Theta _U}$ to eigenvalue $\la$.

        \item $\la$ is not an eigenvalue of any $Z_\Theta (U)$ if and only if $\lim _{z \stackrel{nt}{\rightarrow} \la } (1 -z\ov{\la}) U (U - \Theta (z) ) ^{-1} = 0$. This happens if and only if the angular derivative of $\Theta \vec{x}$ at $\la$ does not exist for any $\vec{x} \in \bm{C} ^n$.
    \en

    \label{prop:evalues}
\end{prop}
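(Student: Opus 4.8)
The plan is to reduce both parts to the computation of the atoms of the matrix measures $\Om _{\Theta _U}$. By Proposition \ref{prop:weakprop} and Remark \ref{subsubsection:equiv}, $Z_\Theta (U)$ is unitarily equivalent to $Z_{\Theta _U}$, multiplication by $\zeta$ on $L^2 _{\Theta _U}$; an eigenvector of the latter at $\la \in \bm{T}$ must be supported on $\{ \la \}$, hence of the form $\delta _{\{ \la \}} \vec{x} = \chi _{\{ \la \}} \vec{x}$, whose squared norm in $L^2 _{\Theta _U}$ is $(\Om _{\Theta _U} (\{ \la \}) \vec{x} , \vec{x})$. Thus $\la \in \sigma _p (Z_\Theta (U))$ iff the atom $\Om _{\Theta _U} (\{ \la \})$ is nonzero, and the eigenspace is parametrised by $\vec{x} \in \ran{\Om _{\Theta _U} (\{ \la \})}$. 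I would then isolate as a lemma the matrix atom formula
\be \Om _{\Theta _U} (\{ \la \}) = \lim _{z \stackrel{nt}{\rightarrow} \la} (1 - z \ov{\la}) (\bm{1} - \Theta (z) U^*) ^{-1} = \lim _{z \stackrel{nt}{\rightarrow} \la} (1 - z \ov{\la}) U (U - \Theta (z) ) ^{-1} . \ee
This follows from the Herglotz representation (\ref{eq:glotz}) by writing $B _{\Theta _U} = 2 (\bm{1} - \Theta U^*) ^{-1} - \bm{1}$ and using $\bm{1} - \Theta (z) U^* = (U - \Theta (z)) U^*$: the standard localisation argument, generalised to positive $\bm{M} _n$-valued Herglotz measures, gives $\lim (1 - z \ov{\la}) \int _{\bm{T}} \frac{\zeta + z}{\zeta - z} \Om _{\Theta _U} (d\zeta) = 2 \Om _{\Theta _U} (\{ \la \})$, since $(1 - z \ov{\la}) \frac{\zeta + z}{\zeta - z} |_{\zeta = \la} = \ov{\la} (\la + z) \to 2$ while the contribution away from $\la$ vanishes, and the $- \bm{1}$ term drops out because $(1 - z \ov{\la}) \to 0$.

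For part (1), $\la \notin \mr{sp} (\Theta)$ means, by Theorem \ref{thm:esspec}, that $\Theta$ is analytic near $\la$ with $\Theta (\la)$ unitary, so $\bm{1} - \Theta (z) U^*$ is analytic at $\la$ with value $\bm{1} - W$, where $W := \Theta (\la) U^*$ is unitary. Since $W$ is unitary one has $\ker{\bm{1} - W} = \ker{\Theta (\la) ^* - U^*}$ (multiply $\Theta (\la) U^* \vec{y} = \vec{y}$ by $\Theta (\la) ^*$). I would evaluate the atom by a Schur-complement, residue-type computation in the decomposition $\bm{C} ^n = \ker{\bm{1} - W} \oplus \ker{\bm{1} - W} ^\perp$: on the complement $\bm{1} - W$ is invertible, so $(\bm{1} - \Theta (z) U^*) ^{-1}$ stays bounded there and the factor $(1 - z \ov{\la}) \to 0$ annihilates the limit, while on the kernel the simple zero of $\bm{1} - \Theta (z) U^*$, whose leading coefficient involves the invertible positive angular derivative $A$ of Theorem \ref{thm:angderv}, produces a nonzero positive limit. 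This yields $\ran{\Om _{\Theta _U} (\{ \la \})} = \ker{\Theta (\la) ^* - U^*}$, so that $\la \in \sigma _p (Z_\Theta (U)) \sm \mr{sp} (\Theta)$ iff this kernel is nontrivial, and the eigenspace is exactly $\{ \delta _{\{ \la \}} \vec{x} : \vec{x} \in \ker{\Theta (\la) ^* - U^*} \}$.

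For part (2), the atom formula gives, for each fixed $U$, that $\la \notin \sigma _p (Z_\Theta (U))$ iff $\lim (1 - z \ov{\la}) U (U - \Theta (z) ) ^{-1} = 0$; quantifying over all $U \in \mc{U} (n)$ gives the first stated equivalence. For the second, I would link non-vanishing of the atom to the Carath\'eodory angular derivative via Theorem \ref{thm:angderv} and its vector refinement, Remark \ref{subsubsection:vecangderv}. If $\Om _{\Theta _U} (\{ \la \}) \neq 0$, then choosing $\vec{x}$ in its range the atom formula forces $\Theta (z) U^* \vec{x} \to \vec{x}$ nontangentially with $\| \Theta (\la) U^* \vec{x} \| = \| \vec{x} \|$, and the prescribed blow-up rate yields a nontangential limit of $\Theta ' (z) U^* \vec{x}$, so that $\Theta U^* \vec{x}$, equivalently $\Theta \vec{y}$ with $\vec{y} = U^* \vec{x}$, has a CAD at $\la$. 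Conversely, if $\Theta \vec{y}$ has a CAD at $\la$ then Remark \ref{subsubsection:vecangderv} furnishes a unitary $U$ with $\delta _\la ^{\vec{x}} \in K^2 _\Theta$, and the boundary point-evaluation correspondence of part (1) produces a $U$ with $\Om _{\Theta _U} (\{ \la \}) \neq 0$. Since $U^* \vec{x}$ sweeps out all of $\bm{C} ^n$ as $U$ and $\vec{x}$ vary, vanishing of the atom for every $U$ is equivalent to the nonexistence of a CAD of $\Theta \vec{x}$ at $\la$ for every $\vec{x}$, as claimed.

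The main obstacle is the matrix atom formula together with the range identification in part (1): pinning down the precise nonzero positive-matrix limit of $(1 - z \ov{\la}) (\bm{1} - \Theta (z) U^*) ^{-1}$ on $\ker{\Theta (\la) ^* - U^*}$ requires the matrix Julia--Carath\'eodory analysis, in particular the invertibility and positivity of the angular derivative $A$ from Theorem \ref{thm:angderv}, and the same circle of estimates underlies the angular-derivative equivalence in part (2).
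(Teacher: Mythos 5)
Your proposal is correct, and its skeleton coincides with the paper's: both reduce eigenvalues of $Z_\Theta (U)$ to atoms of $\Om _{\Theta _U}$ via the unitary equivalence $Z_\Theta (U) \cong Z_{\Theta _U}$ of Remark \ref{subsubsection:equiv}, both derive the atom formula $\Om _{\Theta _U} (\{ \la \}) = \lim _{z \stackrel{nt}{\rightarrow} \la} (1 - z \ov{\la}) U (U - \Theta (z))^{-1}$ from the Herglotz representation (this is exactly the paper's equation (\ref{eq:projform})), and both settle part (2) by feeding that formula into the vector Carath\'eodory-derivative theory of Theorem \ref{thm:angderv} and Remark \ref{subsubsection:vecangderv}, at the same (sketchy) level of detail. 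Where you genuinely differ is part (1). The paper proves the two directions separately by limit manipulations: forward, $\vec{x} = \lim (1 - z\ov{\la}) (\bm{1} - \Theta (z) U^*)^{-1} \vec{x}$ forces $\Theta (z) U^* \vec{x} \rightarrow \vec{x}$, whence $(\Theta (\la)^* - U^*) \vec{x} = 0$; conversely it sets $B(z) = (\bm{1} - \Theta (z) U^*)/(1 - z\ov{\la})$, uses $B(z) \vec{x} = \Delta _\la (z) \vec{x}$ on the kernel, and bounds $\| B(z)^{-1} \vec{x} - \vec{x} \| \leq \| B(z)^{-1} \| \, \| \vec{x} - \Delta _\la (z) \vec{x} \|$. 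You instead evaluate the atom in one stroke by a residue/Schur-complement analysis of the analytic matrix function $\bm{1} - \Theta (z) U^*$ relative to $\ker{\bm{1} - W} \oplus \ker{\bm{1} - W} ^\perp$, $W = \Theta (\la) U^*$; this is legitimate because $\la \notin \mr{sp} (\Theta)$ gives analyticity by Theorem \ref{thm:esspec}, and because $\bm{1} - W$ is normal, so its kernel is reducing and the off-diagonal blocks of $\bm{1} - \Theta (z) U^*$ also vanish at $z = \la$ (a point you should make explicit, since it is what keeps the off-diagonal blocks of the inverse bounded after multiplication by $1 - z\ov{\la}$). This buys something real: your computation exhibits the atom as a positive-definite matrix supported exactly on $\ker{\Theta (\la)^* - U^*}$ --- essentially the inverse of the compression of the angular-derivative matrix $A > 0$ of Theorem \ref{thm:angderv} to that kernel --- with no claim about its actual value. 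The paper, at the corresponding step, asserts that the atom is a projection and deduces $\Delta _\la (\la) = \Delta _\la (\la)^{-1} = \bm{1}$; that assertion is false in general (for scalar $\Theta (z) = z^2$ and $U = \la ^2$ one gets $\Delta _\la (\la) = 2$ and atom $1/2$: atoms of $\Om _{\Theta _U}$ are compressions $V_n ^* P_- \chi _{\{ \la \}} (Z_\Theta (U)) P_- V_n$ of spectral projections, not projections themselves), so your route in fact repairs the paper's converse. One caveat: your part (2) converse invokes ``the boundary point-evaluation correspondence of part (1)'', but part (1) as you prove it requires analyticity at $\la$, which is unavailable when $\la \in \mr{sp} (\Theta)$; there you should argue directly that a CAD of $\Theta V^* \vec{x}$ at $\la$ makes $(\bm{1} - \Theta (z) V^*) \vec{x} / (1 - z\ov{\la})$ converge to some $\vec{w}$, after which boundedness of $(1 - z\ov{\la})(\bm{1} - \Theta (z) V^*)^{-1}$ gives $\Om _{\Theta _V} (\{ \la \}) \vec{w} = \vec{x} \neq 0$ --- the same kind of estimate the paper uses, and no harder.
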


    In the above $\delta _{ \{ \la \} } \vec{x} \in L^2 _\Theta$ is the point mass function which takes the
value $\vec{x}$ at $\la \in \bm{T}$ and vanishes elsewhere on $\bm{T}$.

\begin{proof}
    By Remark \ref{subsubsection:equiv}, $Z_\Theta (U)$ is unitarily equivalent to $Z_{\Theta _U}$ which acts as multiplication
by $z$ in $L^2 _{\Theta _U}$. It follows that $\la \in \bm{T}$ is an eigenvalue of $Z_\Theta (U)$ if and only if $\Om _{\Theta _U}$
has a point mass at $\la$, \emph{i.e.} if and only if $\Om _{\Theta _U} ( \{ \la \} ) \neq 0$.

Now by the Herglotz theorem \be 2 (\bm{1} -\Theta (z) U^*) ^{-1} = B _{\Theta _U} (z) + \bm{1} = 2 \int _\bm{T} \frac{1}{1 -\ov{\zeta}z} \Om _{\Theta _U} (d\zeta), \ee and note that $(\bm{1}  -\Theta (z) U^* ) ^{-1} = U ( U -\Theta (z) ) ^{-1}$.  It follows easily from this that
\be \Om _{\Theta _U} [ \{ \la \} ] = \lim _{z \rightarrow \la} (1 - z \ov{\la} ) U (U -\Theta (z) ) ^{-1}. \label{eq:projform} \ee
In the above limit, we
assume $z$ converges to $\la$ non-tangentially.  Hence $\la $ is not an eigenvalue of $Z _\Theta (U)$ if and only if this limit is identically $0$. This happens if and only if
\be \lim _{z \rightarrow \la}  \| \frac{ \left( \Theta (z) - U \right) }{z - \la} \vec{x} \| = \infty ,\ee for every $\vec{x}
\in \bm{C} ^n$. This shows that $\la$ is not an eigenvalue of any $Z_\Theta (U)$, $U \in \mc{U} (n)$ if and only if
the angular derivative of $\Theta (z) \vec{x}$ at $\la $ does not exist for any $\vec{x} \in \bm{C} ^n$ (see Remark \ref{subsubsection:vecangderv}).

Since $Z_{\Theta _U}$ acts as multiplication by $z$, clearly $\la $ is an eigenvalue of
$Z_{\Theta _U}$ if and only if there is a $\vec{x} \in \bm{C} ^n$ such that $\delta _{ \{ \la \} } \vec{x}$ is an eigenvector
of $Z_{\Theta _U}$. If $\delta _{\{ \la \} } \vec{x}$ is such an eigenvector, then $\vec{x} \in \bm{C} ^n$ must be in the range of the
non-zero projection $\Om _{\Theta _U} [ \{ \la \} ] \in \bm{M} _n (\bm{C})$. Hence,
\be \vec{x} = \Om _{\Theta _U} [ \{ \la \} ] \vec{x} = \lim _{z \rightarrow \la} (1 - z \ov{\la})(\bm{1} - \Theta(z)U^*)^{-1} \vec{x}.\ee
This in turn implies that $\lim _{z \rightarrow \la} (\bm{1} - \Theta(z) U^*) \vec{x} =0$ so that $(\Theta (\la) ^* - U^*) \vec{x} =0$.

Conversely suppose that $\vec{x} \in \ker{\Theta (\la) ^* - U^*}$. If $\la \notin \mr{sp} (\Theta)$, it follows from Theorem \ref{thm:esspec}, that $\Theta$ is analytic in a neighbourhood of $\la$, so that in particular the angular derivative of $\Theta $ exists at $\la$. By Theorem
\ref{thm:angderv}, the angular derivative $\Theta ' (\la)$ is invertible, and it is the limit of the invertible matrices
$A(z) := \frac{\Theta (z) - \Theta (\la)}{z-\la}$ as $z \rightarrow \la$ non-tangentially.

Recall the matrix analytic function $\Delta _\la (z) := \frac{\bm{1} - \Theta (z) \Theta (\la ) ^*}{1 - z \ov{\la}}$. By
Theorem \ref{thm:angderv}, $\Delta _\la (z)$ converges to $ \Delta _\la (\la) := \la \Theta (\la ) ^* \Theta ' (\la)$ as $z \rightarrow \la$
non-tangentially, and this limit is an invertible operator. The non-tangential limit of $\Delta _\la (z) ^{-1}$ at $\la$
is equal to the projection $\Om _{\Theta _{\Theta (\la ) }} [ \{ \la \} ]$ by equation (\ref{eq:projform}) so that
$\Delta _\la (z) ^{-1} $ is norm bounded in this limit and the non-tangential limit of $\Delta _\la (z) ^{-1}$ is equal to $\Delta _\la (\la) ^{-1}$. Since this is an invertible
projection, $\Delta _\la (\la) = \Delta _\la (\la) ^{-1} = \bm{1}$.

Let $B(z) := \frac{\bm{1} - \Theta (z) U^*}{1-z\ov{\la}}$. Previous calculations in this proof
have shown that $B(z) ^{-1} \rightarrow \Om _{\Theta _U} [ \{ \la \} ] \vec{x}$. To show that $\delta _{ \{ \la \} } \vec{x}$
is an eigenvector of $Z_{\Theta _U}$, we need to show that $B(z) ^{-1} \vec{x}$ converges to $\vec{x}$
as $z \rightarrow \la $ non-tangentially. This is easily accomplished by observing that $\| B(z) ^{-1} \vec{x} - \vec{x} \|
\leq \| B(z) ^{-1} \| \| \vec{x} - B(z) \vec{x} \| = \| B(z) ^{-1} \| \| \vec{x} - \Delta _\la (z) \vec{x} \|$.  The last
equality follows from the fact that $\vec{x} \in \ker{\Theta(\la) ^* - U^*}$. Since $\| B(z) ^{-1} \| $ is bounded as
$z \rightarrow \la $ non-tangentially, and $\Delta _\la (z)$ converges to $\bm{1}$, the proof is complete.
\end{proof}

\subsection{Total orthogonal sets of point evaluation vectors}

    Recall the matrix kernel functions $\Delta _w (z) := \frac{\bm{1} -\Theta (z) \Theta (w) ^*}{1-z\ov{w}}$,
and the point evaluation functions $\delta ^{\vec{x}} _w := \Delta _w \vec{x} \in K^2 _\Theta$ which satisfy $\ip{f}{\delta _w ^{\vec{x}}} _\Theta
= (f (w) , \vec{x} )$ for all $f \in K^2 _\Theta$, all $w \in \bm{D}$, and all $w \in \bm{T}$ for which the
angular derivative of $\Theta$ at $w$ exists.

In this section we determine necessary and sufficient conditions for $K^2 _\Theta$ to have a total orthogonal set
of point evaluation functions. Suppose that $\La := \{ \delta _{\la _i } ^{\vec{x} _i } \} _{i \in \bm{Z}} \subset K^2 _\Theta $
is such a set.  For convenience define $\delta _i := \delta _{\la _i} ^{\vec{x} _i}$.
Let $N _\La :K^2 _\Theta \rightarrow K^2 _\Theta$ be the normal operator $N_\La := \sum _{n \in \bm{Z}}
\la _i \frac{\ip{\cdot}{\delta _i } \delta _i}{\| \delta _i \| ^2 }$.

\begin{prop}
    If $\La = \{ \delta _i \} _{i \in \bm{Z}} \subset K^2 _\Theta$ is a total orthogonal set, then
$\Theta$ is extreme and $N_\La$ is unitarily equivalent to $Z_\Theta (U)$ for some $U \in \mc{U} (n)$.
Hence $N_\La$ is unitary and $\{ \la _i \} _{i \in \bm{Z}} \subset \bm{T}$. \label{prop:tot}
\end{prop}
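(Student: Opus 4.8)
The plan is to exhibit a single $U\in\mc{U}(n)$ for which all the $\delta_i$ are eigenvectors of the model operator attached to $U$, and then to read off every assertion. The computational engine is the identity, valid for every $\la\in\ov{\bm{D}}$ at which $\delta_\la^{\vec{x}}\in K^2_\Theta$ and using $\Theta(0)=0$,
\[ X_\Theta\,\delta_\la^{\vec{x}} = S^*\delta_\la^{\vec{x}} = \ov{\la}\,\delta_\la^{\vec{x}} - (S^*\Theta)\,\Theta(\la)^*\vec{x}, \]
obtained by a direct calculation from $\delta_\la^{\vec{x}}=\Delta_\la\vec{x}$. Since $Q\delta_\la^{\vec{x}}=\vec{x}$ (the constant function, because $\Theta(0)=0$), combining this with the formula $V_{\Theta_U}Y_{\Theta_U}^*V_{\Theta_U}^* = X_\Theta + S^*\Theta\,U^*Q$ of Lemma \ref{lemming:intertwiner} gives
\[ \big(X_\Theta + S^*\Theta\,U^*Q\big)\delta_\la^{\vec{x}} = \ov{\la}\,\delta_\la^{\vec{x}} + (S^*\Theta)\big(U^*-\Theta(\la)^*\big)\vec{x}. \]
Thus, whenever $\la\in\bm{T}$ and $\vec{x}\in\ker{\Theta(\la)^*-U^*}$, the vector $\delta_\la^{\vec{x}}$ is an eigenvector of $X_\Theta+S^*\Theta\,U^*Q=V_{\Theta_U}Y_{\Theta_U}^*V_{\Theta_U}^*$ with eigenvalue $\ov{\la}$. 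Everything reduces to producing one unitary $U$ with $\vec{x}_i\in\ker{\Theta(\la_i)^*-U^*}$ for all $i$, and to showing $\la_i\in\bm{T}$.

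First I would record what totality and orthogonality give. Writing $\hat\delta_i=\delta_i/\|\delta_i\|$ for the resulting orthonormal basis and applying Parseval to the constant function $\delta_0^{\vec{x}}=\vec{x}$, for which $\ip{\delta_0^{\vec{x}}}{\delta_i}_\Theta=(\vec{x},\vec{x}_i)$ and $\|\delta_0^{\vec{x}}\|^2=\|\vec{x}\|^2$, yields the resolution of the identity $\sum_i\|\delta_i\|^{-2}\,\vec{x}_i\vec{x}_i^{\,*}=\bm{1}_n$ on $\bm{C}^n$; in particular the $\vec{x}_i$ span $\bm{C}^n$. Next, the first displayed identity shows $N_\La^*=X_\Theta+W$ with $W\delta_i=(S^*\Theta)\Theta(\la_i)^*\vec{x}_i$, so $W$ has range in the $n$-dimensional subspace $(S^*\Theta)\bm{C}^n$ and $N_\La^*$ is a perturbation of $X_\Theta$ of rank at most $n$. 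The key step is then to prove $\{\la_i\}\subset\bm{T}$, equivalently that the normal operator $N_\La$ is unitary. Recalling that $X_\Theta$ is a completely non-unitary contraction with defect indices $(n,n)$ and characteristic function $\Theta$, so that $\bm{1}-X_\Theta X_\Theta^*$ and $\bm{1}-X_\Theta^*X_\Theta$ have rank at most $n$, expanding $N_\La N_\La^*=(X_\Theta^*+W^*)(X_\Theta+W)$ exhibits $\bm{1}-N_\La N_\La^*$ as a finite-rank operator; by normality $\bm{1}-N_\La^*N_\La=\mathrm{diag}(1-|\la_i|^2)$ is then finite rank, so at most finitely many $\la_i$ lie in $\bm{D}$. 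Eliminating these remaining interior eigenvalues---using that the corresponding eigenvectors lie in $\ran{\bm{1}-N_\La^*N_\La}$ together with the computation $\ran{\bm{1}-X_\Theta X_\Theta^*}\subseteq(S^*\Theta)\bm{C}^n$ and the resolution of the identity above---is the step I expect to be the main obstacle.

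Granting $\{\la_i\}\subset\bm{T}$, each $\Theta(\la_i)$ is unitary by Theorem \ref{thm:angderv}, so $\vec{y}_i:=\Theta(\la_i)^*\vec{x}_i$ has $\|\vec{y}_i\|=\|\vec{x}_i\|$. For $i\ne j$ with $\la_i\ne\la_j$, the orthogonality $\ip{\delta_i}{\delta_j}_\Theta=(\Delta_{\la_i}(\la_j)\vec{x}_i,\vec{x}_j)=0$, after clearing the nonzero scalar $1-\la_j\ov{\la_i}$, unwinds to $(\vec{x}_i,\vec{x}_j)=(\Theta(\la_i)^*\vec{x}_i,\Theta(\la_j)^*\vec{x}_j)=(\vec{y}_i,\vec{y}_j)$; when $\la_i=\la_j$ this equality is immediate because $\Theta(\la_i)$ is unitary. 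Hence the Gram matrices of $\{\vec{x}_i\}$ and $\{\vec{y}_i\}$ coincide, so $\vec{x}_i\mapsto\vec{y}_i$ extends to a well-defined isometry and, since the $\vec{x}_i$ span $\bm{C}^n$, to a unitary operator on $\bm{C}^n$, which we write as $U^*$ with $U\in\mc{U}(n)$, satisfying $U^*\vec{x}_i=\Theta(\la_i)^*\vec{x}_i$, i.e.\ $\vec{x}_i\in\ker{\Theta(\la_i)^*-U^*}$ for every $i$. By the first paragraph every $\delta_i$ is then an eigenvector of $X_\Theta+S^*\Theta\,U^*Q$ with eigenvalue $\ov{\la_i}$; as $\{\hat\delta_i\}$ is a total orthonormal set this forces $X_\Theta+S^*\Theta\,U^*Q=N_\La^*$, that is, $N_\La=V_{\Theta_U}Y_{\Theta_U}V_{\Theta_U}^*$.

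Finally I would close the loop. As $N_\La$ is unitary and $V_{\Theta_U}\colon H^2_{\Theta_U}\to K^2_{\Theta_U}=K^2_\Theta$ is a unitary, the isometry $Y_{\Theta_U}=Z_{\Theta_U}|_{H^2_{\Theta_U}}$ is unitarily equivalent to $N_\La$ and hence unitary; being the restriction of the unitary $Z_{\Theta_U}$ to the invariant subspace $H^2_{\Theta_U}$, this forces $H^2_{\Theta_U}$ to reduce $Z_{\Theta_U}$ and therefore $H^2_{\Theta_U}=L^2_{\Theta_U}$. By the extreme-point theorem $\Theta_U$, and so $\Theta$ (extremeness being unaffected by $\Theta\mapsto\Theta U^*$, since $|\Theta U^*|$ and $|\Theta|$ have the same spectrum), is extreme. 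In the extreme case $Y_{\Theta_U}=Z_{\Theta_U}$, which by Remark \ref{subsubsection:equiv} is unitarily equivalent to $Z_\Theta(U)$; therefore $N_\La\cong Z_\Theta(U)$ is unitary and $\{\la_i\}\subset\bm{T}$, completing the proof.
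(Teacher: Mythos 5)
Your proposal stalls at exactly the point that carries the whole weight of the proposition, and you say so yourself. The preparatory material is correct: the identity $S^*\delta_\la^{\vec{x}}=\ov{\la}\,\delta_\la^{\vec{x}}-(S^*\Theta)\Theta(\la)^*\vec{x}$, the Parseval identity $\sum_i\|\delta_i\|^{-2}\vec{x}_i\vec{x}_i^{\,*}=\bm{1}_n$, the fact that $N_\La^*$ is a finite-rank perturbation of $X_\Theta$, and hence (by normality) that $\bm{1}-N_\La^*N_\La=\mathrm{diag}(1-|\la_i|^2)$ has finite rank, so that at most finitely many $\la_i$ lie in $\bm{D}$. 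But eliminating those finitely many interior eigenvalues is not a loose end one can expect to tidy up by more rank-counting: the finite-rank structure, the inclusion of ranges in $(S^*\Theta)\bm{C}^n$, and the frame identity are all compatible with a few interior points, and "$N_\La$ is unitary and $\{\la_i\}\subset\bm{T}$" is precisely the assertion being proved. Everything after your admitted obstacle (the Gram-matrix construction of $U$, the identification of $N_\La^*$ with $X_\Theta+S^*\Theta U^*Q$, and the derivation of extremeness) is conditional on it, so the proposal as written does not prove the proposition. The paper's proof supplies the missing mechanism, and in the opposite order to your plan. First, since $N_\La^*$ agrees with $X_\Theta$ on $K^2_\Theta\ominus QK^2_\Theta$, the operator $V_\Theta^*N_\La^*V_\Theta$ is a normal finite-rank perturbation of the co-isometry $Y_\Theta^*$; a normal Fredholm operator has index zero, the index is stable under finite-rank perturbations, so $Y_\Theta^*$ has index zero, its Wold decomposition has no shift part, $Y_\Theta$ is unitary, hence $H^2_\Theta=L^2_\Theta$ and $\Theta$ is extreme --- all of this \emph{before} anything is known about where the $\la_i$ lie (your finiteness argument echoes this step but aims it at the wrong target). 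Second, with extremeness in hand, $V_\Theta^*N_\La^*V_\Theta$ is a normal extension of the isometric linear transformation obtained by restricting the partial isometry $Z_\Theta(0)^*$, and Lemma \ref{lemming:unitary} --- a normal extension of an isometric linear transformation whose domain and range together span the space must be unitary --- forces it to be unitary, hence equal to some $Z_\Theta(U)$; only now does $\{\la_i\}\subset\bm{T}$ drop out. Your plan has no counterpart to Lemma \ref{lemming:unitary}, and it is that lemma, not finite-rank bookkeeping, that kills the interior eigenvalues.

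Two secondary points. In the Gram step you assert each $\Theta(\la_i)$ is unitary "by Theorem \ref{thm:angderv}"; that theorem concerns the CAD of the full matrix function, whereas the hypothesis $\delta_{\la_i}^{\vec{x}_i}\in K^2_\Theta$ only yields the vector-valued statement of Remark \ref{subsubsection:vecangderv}: the nontangential limit of $\Theta(z)^*\vec{x}_i$ exists and has norm $\|\vec{x}_i\|$, while the full boundary matrix need not exist. This is repairable (the vectors on which a contraction preserves norm form a subspace on which it is isometric, which is all your case $\la_i=\la_j$ requires). Likewise, "$X_\Theta$ has characteristic function $\Theta$ and defect indices $(n,n)$" is not available before extremeness is proved --- the paper establishes the characteristic-function statement for $Z_\Theta(0)$, which is identified with $X_\Theta^*$ via $V_\Theta$ only once $H^2_\Theta=L^2_\Theta$; what your argument needs, and what is true unconditionally, is merely that $X_\Theta\cong Y_\Theta(0)^*$ has finite-rank defect operators. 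On the positive side, your explicit eigenvector identity and the Gram-matrix construction of $U$ from the data $(\la_i,\vec{x}_i)$ are more concrete than anything in the paper, which obtains $U$ abstractly as the label of a unitary extension; once unitarity of $N_\La$ is established by the paper's route, your computation would be a nice way to exhibit the perturbation parameter $U$ directly.
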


The following simple fact will be used in the proof of the above proposition.

\begin{lemming}
    Let $V$ be an isometric linear transformation with deficiency indices $(n,n)$, and let $P, Q$ be the projectors
onto $\dom{V}$ and $\ran{V}$ respectively. If $(P \vee Q ) ^\perp =0$ then any normal extension of
$V$ must be unitary. \label{lemming:unitary}
\end{lemming}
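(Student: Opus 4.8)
The plan is to show that the bounded self-adjoint operator $D := N^*N - \bm{1}$ vanishes. Since $N$ is normal, $N^*N = NN^*$, so $D = N^*N - \bm{1} = NN^* - \bm{1}$; once $D = 0$ we get $N^*N = NN^* = \bm{1}$, i.e. $N$ is unitary. The hypothesis $(P\vee Q)^\perp = 0$ has two readings that I will use in turn. First, $(P\vee Q)^\perp = \dom{V}^\perp \cap \ran{V}^\perp = \mf{D}_+ \cap \mf{D}_-$, the intersection of the deficiency spaces $\mf{D}_+ := \dom{V}^\perp$ and $\mf{D}_- := \ran{V}^\perp$; second, and equivalently, $\dom{V} + \ran{V}$ is dense. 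Because $D$ is bounded, this density reduces the problem to checking that $D$ annihilates each of $\dom{V}$ and $\ran{V}$ separately.

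On $\dom{V}$ I use that $N$ extends the isometry $V$: for $f, f' \in \dom{V}$ one has $(N^*Nf, f') = (Nf, Nf') = (Vf, Vf') = (f, f')$, so $PDf = 0$ and $Df = N^*Vf - f \in \mf{D}_+$. On $\ran{V}$ I write $g = Vf$ and split $N^*g = PN^*g + (\bm{1} - P)N^*g$; the same isometry identity gives $PN^*g = f$, so $N^*g = f + d_g$ with defect $d_g := (\bm{1} - P)N^*g \in \mf{D}_+$. Using $D = NN^* - \bm{1}$ this yields
\be ( Dg , g ) = \| N^*g \|^2 - \| g \|^2 = \| d_g \|^2 \geq 0 , \ee
together with $Dg = N(N^*g - V^{-1}g) = N d_g$. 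Hence the entire obstruction is carried by the defect vectors $d_g \in \mf{D}_+$, and $D = 0$ if and only if $d_g = 0$ for every $g \in \ran{V}$.

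The cleanest way to kill the defect is available precisely in the situation where the lemma is applied: there $N = N_\La$ is a contraction, its eigenvalues being the abscissae of point-evaluation vectors and so lying in $\ov{\bm{D}}$. For a contraction $D = N^*N - \bm{1} \leq 0$, so the identity $( Dg , g ) = \| d_g \|^2 \geq 0$ forces $d_g = 0$ for all $g \in \ran{V}$, and $D = 0$ follows at once from the density established above.

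For the statement without a contractivity hypothesis one must instead eliminate off-circle spectrum structurally. A direct computation gives $DN = ND$, so $\ker{D}$ reduces $N$ and $N$ is already unitary on it, while on the finite-dimensional space $R := \ker{D}^\perp = \ov{\ran{D}}$ (finite-dimensional because $\ran{D} \subseteq \mf{D}_+ + N\mf{D}_+$, of dimension at most $2n$) the operator $N$ restricts to a normal operator all of whose eigenvalues $\mu$ satisfy $|\mu| \neq 1$. Given an eigenvector $\xi \in R$, $N\xi = \mu\xi$ and hence $N^*\xi = \ov{\mu}\xi$; projecting both relations onto $\dom{V}$ and $\ran{V}$ gives $Q\xi = \ov{\mu}\, V(P\xi)$, and if $P\xi = 0$ this puts $\xi \in \mf{D}_+ \cap \mf{D}_- = 0$, a contradiction, so $P\xi \neq 0$. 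I expect the main obstacle to be the final step: deriving a contradiction from the existence of such an off-circle eigenvector. This cannot follow from normality and the trivial intersection alone, and must invoke the structure theory of simple isometric transformations (the Lifschitz--Glazman machinery underlying the rest of the paper, in particular the cyclicity of the deficiency vectors and the density of $\ran{\bm{1} - V}$). This is the one genuinely non-formal point; everything preceding it is bookkeeping driven by normality and the two uses of $(P\vee Q)^\perp = 0$.
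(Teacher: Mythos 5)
Your contractive-case argument is correct and complete, and it takes a genuinely different route from the paper's. The paper's proof opens by asserting that \emph{any} extension of $V$ can be written as $V \oplus A$ on $\mc{H} = \dom{V} \oplus \dom{V}^\perp$ with $A : \dom{V}^\perp \rightarrow \ran{V}^\perp$; granting that, normality gives $N^*N = P + A^*A$ and $NN^* = Q + AA^*$, where $A^*A$ annihilates $\dom{V}$ and $AA^*$ annihilates $\ran{V}$, so $N^*N = NN^*$ acts as the identity on $\dom{V} + \ran{V}$, which is all of $\mc{H}$ by the hypothesis (density plus finite codimension of $\dom{V}$ makes the sum closed); no contractivity is used anywhere. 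Your proof replaces that unproved structural claim by the defect vectors $d_g := (\bm{1}-P)N^*g$ and the sign argument $0 \leq \| d_g \| ^2 = \ip{Dg}{g} \leq 0$, which uses contractivity instead. Note that once $d_g = 0$ you recover the paper's structural claim for free: for $h \in \dom{V}^\perp$ and $g = Vf$ one has $\ip{Nh}{g} = \ip{h}{N^*g} = \ip{h}{f} = 0$, so $N(\dom{V}^\perp) \subseteq \ran{V}^\perp$. Since the paper invokes the lemma exactly once, for an extension it has just shown to be normal \emph{and contractive} (the $\la _i$ lie in $\ov{\bm{D}}$), your argument fully covers the intended application, and arguably justifies it more honestly than the paper's own proof does.

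Your final paragraph is right to be suspicious of the unconditional statement, but the obstacle you anticipate cannot be overcome: without contractivity (or the paper's structural hypothesis) the lemma as stated is false, so no appeal to Lifschitz--Glazman theory will finish your eigenvector argument. Take $n=1$: let $\{ \xi _1 , \xi _2 \}$ be an orthonormal basis of $\bm{C} ^2$, set $e := \frac{1}{2} \xi _1 + \frac{\sqrt{3}}{2} \xi _2$, $\dom{V} := \bm{C} e$ and $Ve := \xi _1$. Then $V$ is isometric with deficiency indices $(1,1)$, and $\dom{V} + \ran{V} = \mathrm{span} \{ e , \xi _1 \} = \bm{C} ^2$, so $(P \vee Q)^\perp = 0$. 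The self-adjoint, hence normal, operator $N := 2 \ip{\cdot}{\xi _1} \xi _1$ satisfies $Ne = 2 \ip{e}{\xi _1} \xi _1 = \xi _1 = Ve$, so $N$ is a normal extension of $V$, yet $N \xi _2 = 0$, so $N$ is not unitary. (Consistently with your partial results: this $N$ is not a contraction, it fails to map $\dom{V}^\perp$ into $\ran{V}^\perp$, which falsifies the first sentence of the paper's proof for general normal extensions, and its eigenvalues $2$ and $0$ lie off the unit circle with eigenvectors having nonzero projection onto $\dom{V}$.) The lemma is therefore only true with an added hypothesis: either your reading (normal \emph{contractive} extensions) or the paper's implicit one (extensions of the form $V \oplus A$ with $A : \dom{V}^\perp \rightarrow \ran{V}^\perp$). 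Your proof of the former is the version the rest of the paper actually needs; there is no unconditional argument to be found.
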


\begin{proof}
    Given any $\phi \in \mc{H}$ there exist $\phi _1 \in Q\mc{H}$ and $\phi _2 \in P \mc{H}$ such that $\phi = \phi _1 + \phi _2$.
Any extension of $V$ can be written as $V(A) = V \oplus A $ on $\mc{H} = \dom{V} \oplus \dom{V} ^\perp$ where $A : \dom{V} ^\perp
\rightarrow \ran{V} ^\perp$. If $V(A)$ is a normal extension of $V$ then $V(A) ^* V(A) = P +  A^* A = V(A) V(A) ^* = Q + A A^*$
where $A^* A$ vanishes on $\dom{V}$ and $AA^*$ vanishes on $\ran{V}$.

It follows that $V(A) ^* V(A) \phi = V(A) ^* V(A) \phi _1 + V(A) V(A) ^* \phi _2 = P \phi _1 + Q \phi _2 =
\phi _1 + \phi _2 = \phi$. Hence $V(A)$ is unitary.
\end{proof}

\begin{lemming}
    Let $P$ be the projector onto $\mf{D} _- \subset H^2 _\Theta$ and let $Q:= V_\Theta P V_\Theta ^*$.
The restriction $Y_\Theta (0) = Z_\Theta (0) | _{H^2 _\Theta}$ and $X_\Theta$, the restriction of the backwards shift
to $K^2 _\Theta$ are related by the following formula:

\be V_\Theta Y_\Theta (0) ^* V_\Theta ^* = X_\Theta (\bm{1} -Q).\ee
\label{lemming:balls}
\end{lemming}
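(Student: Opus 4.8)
The plan is to reduce everything to the intertwining identities already established for $V_\Theta$ in the subsection ``Determination of AC measures'', since under the assumption $\Theta(0)=0$ (in force here, as it was in the derivation of equation (\ref{eq:shiftimage})) the image of $Y_\Theta^*$ under conjugation by $V_\Theta$ is known explicitly. First I would record the special case $A=0$ of the relation $Y_\Theta(A)^* = Y_\Theta^* + Y_\Theta^* P(A^*-\bm{1})P$ derived there; using $P^2=P$ this collapses to $Y_\Theta(0)^* = Y_\Theta^*(\bm{1}-P)$, where $P=P_-$ is the projector onto the constants $\mf{D}_-$ in $H^2_\Theta$.

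Next, since $V_\Theta$ is a unitary of $H^2_\Theta$ onto $K^2_\Theta$, I would conjugate and insert a factor $V_\Theta^*V_\Theta=\bm{1}$ to split the product:
\be V_\Theta Y_\Theta(0)^* V_\Theta^* = \left( V_\Theta Y_\Theta^* V_\Theta^* \right) \left( V_\Theta(\bm{1}-P)V_\Theta^* \right). \ee
The first factor is exactly equation (\ref{eq:shiftimage}), namely $V_\Theta Y_\Theta^* V_\Theta^* = X_\Theta + S^*\Theta Q$, while the second is $V_\Theta(\bm{1}-P)V_\Theta^* = \bm{1} - V_\Theta P V_\Theta^* = \bm{1}-Q$ by the definition $Q:=V_\Theta P V_\Theta^*$. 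Here $Q$ is a genuine orthogonal projection, since $V_\Theta$ is unitary and $P$ is one.

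Finally I would expand $(X_\Theta + S^*\Theta Q)(\bm{1}-Q) = X_\Theta(\bm{1}-Q) + S^*\Theta\, Q(\bm{1}-Q)$ and observe that $Q(\bm{1}-Q)=Q-Q^2=0$, leaving precisely $X_\Theta(\bm{1}-Q)$, as claimed. I expect essentially no obstacle here: the only points requiring care are checking that one is entitled to invoke (\ref{eq:shiftimage}) (which rests on $\Theta(0)=0$) and the bookkeeping with the two projections $P$ and $Q$. As a consistency check one may also note that $X_\Theta=S^*|_{K^2_\Theta}$ annihilates the constant functions spanning the range $K_0$ of $Q$, so that $X_\Theta Q=0$ and hence $X_\Theta(\bm{1}-Q)=X_\Theta$; from this viewpoint the lemma is merely the $A=0$ instance of equation (\ref{eq:image}), rewritten so as to display the projection $\bm{1}-Q$ explicitly.
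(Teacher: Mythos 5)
Your first two steps coincide with the paper's own proof and are sound in full generality: the identity $Y_\Theta(0)^* = Y_\Theta^*(\bm{1}-P)$ is pure algebra from $Z_\Theta(0) = Z_\Theta - PZ_\Theta$ and the invariance of $H^2_\Theta$ (the paper gets it in one line via $Y_\Theta(0)^* = R_\Theta Z_\Theta(0)^* R_\Theta$), and the splitting $V_\Theta Y_\Theta(0)^* V_\Theta^* = \left( V_\Theta Y_\Theta^* V_\Theta^* \right)(\bm{1}-Q)$ is immediate since $V_\Theta$ is unitary. The gap is your third step, specifically the premise that ``$\Theta(0)=0$ is in force here.'' It is not. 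Equation (\ref{eq:shiftimage}) is derived in the subsection on determination of AC measures, which opens with the standing hypothesis $\Theta(0)=0$; the present lemma carries no such hypothesis, Section \ref{section:cauchyint} (where $V_\Theta$, $Y_\Theta$, $X_\Theta$ live) explicitly does \emph{not} assume it, and the lemma is applied in the proof of Proposition \ref{prop:tot}, whose statement is for arbitrary purely contractive $\Theta$ --- the paper's parenthetical there, ``if $\Theta(0)=0$ these are constant functions,'' makes the intended generality explicit. So your argument establishes only a special case of what is needed. The same issue undercuts your closing consistency check: when $\Theta(0)\neq 0$ the range of $Q$ is spanned by $\delta_0^{e_i} = \Delta_0 e_i$, which are not constants, and $X_\Theta \delta_0^{\vec{x}} = S^*\left( (\bm{1}-\Theta\Theta(0)^*)\vec{x} \right) = -(S^*\Theta)\Theta(0)^*\vec{x}$ need not vanish; hence $X_\Theta Q \neq 0$, $X_\Theta(\bm{1}-Q) \neq X_\Theta$, and the lemma is genuinely stronger than the $A=0$ instance of (\ref{eq:image}).

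The repair is exactly what the paper does, and it slots into your outline with a single substitution: in place of (\ref{eq:shiftimage}), use Proposition \ref{prop:twine} in the form of equation (\ref{eq:opex}) together with (\ref{eq:id3}), both of which are proved in Section \ref{section:cauchyint} without assuming $\Theta(0)=0$. These give
\be V_\Theta Y_\Theta^* V_\Theta^* = X_\Theta + V_\Theta Y_\Theta^* P(\bm{1}-\Theta(0))P V_\Theta^* = X_\Theta + (S^*\Theta)(\bm{1}-\Theta(0))^{-1}(\bm{1}-\Theta(0))P V_\Theta^* = X_\Theta + (S^*\Theta)P V_\Theta^* ,\ee
where the factors involving $\Theta(0)$ cancel (using the paper's convention $P\Theta(0)P = \Theta(0)P$). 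The cross term then dies against $\bm{1}-Q$ just as in your computation, since $PV_\Theta^* Q = PV_\Theta^* V_\Theta P V_\Theta^* = PV_\Theta^*$, so that $(S^*\Theta)PV_\Theta^*(\bm{1}-Q) = 0$, leaving $V_\Theta Y_\Theta(0)^* V_\Theta^* = X_\Theta(\bm{1}-Q)$ for arbitrary purely contractive $\Theta$.
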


\begin{proof}
    Let $R_\Theta$ be the projection of $L^2 _\Theta$ onto $H^2 _\Theta$ so that $Y_\Theta (0) ^*
= R_\Theta Z_\Theta (0) ^* R_\Theta = Y_\Theta ^* (\bm{1} - P)$. Then \be V_\Theta Y_\Theta (0) ^* V_\Theta ^*
= V_\Theta Y_\Theta ^* V_\Theta ^* V_\Theta (\bm{1} -P ) V_\Theta ^* = V_\Theta Y_\Theta ^* V_\Theta ^* (\bm{1} -Q).
\label{eq:balls2}\ee
By equations (\ref{eq:opex}) and (\ref{eq:actonconst}),
\ba V_\Theta Y_\Theta ^* V_\Theta ^* & = & X_\Theta + V_\Theta Y_\Theta ^* P (\bm{1} - \Theta (0) ) PV_\Theta ^*
\nonumber \\ & =& X_\Theta + (S^* \Theta) P (\bm{1} - \Theta (0) ) ^{-1} (\bm{1} - \Theta (0)) P V_\Theta ^* \nonumber
\\ & = & X_\Theta + (S^* \Theta) P V_\Theta ^*. \ea In the above note that any $A \in \bm{M} _n (\bm{C})$ is viewed as
the operator $\sum _{ij} ( \cdot , b_i ^- ) _\Theta A _{ij} b_j ^-$ where $\{ b_i ^- \}$ is the fixed
ON basis of $\mf{D} _-$ so that in particular $P \Theta (0) P = \Theta (0) P = \Theta (0)$. Equation (\ref{eq:balls2}) becomes
\be V_\Theta Y_\Theta (0) ^* V_\Theta ^* = (X_\Theta  + (S^* \Theta) P V _\Theta ^*) (\bm{1} - Q) = X_\Theta (\bm{1} -Q).\ee
\end{proof}

\begin{proof}{ (of Proposition \ref{prop:tot})}
    Recall the canonical unitary transformation $V_\Theta : H^2 _\Theta \rightarrow K^2 _\Theta$ from the Cauchy integral
representation of $K^2 _\Theta$. Let $P=P_-$ be the projector onto the constant functions in $H^2 _\Theta$ spanned by the
basis $\{ e_i  \} _{i=1} ^n$ and let $Q = V_\Theta P V_\Theta ^*$ be the projector in $K^2 _\Theta$ onto the span
of the vectors $\delta _0 ^{e _i} = \Delta _0 e_i$ for $1 \leq i \leq  n$ (if $\Theta (0) = 0$ these are constant functions).
If $A \in \ov{(\bm{M} _n)}_1 $ then
$Z_\Theta (A) ^* = Z_\Theta ^* ( \bm{1} + (A - \bm{1} ) P )$. Let $P_\Theta $ denote the projector onto $H^2 _\Theta$.
Then $Y_\Theta (A) ^* = P_\Theta Z_\Theta (A) ^* P _\Theta = P_\Theta Z_\Theta ^* P_\Theta (\bm{1} + (A - \bm{1} ) P)$.

By Lemma \ref{lemming:balls}, $V_\Theta Y_\Theta ^* (0) V_\Theta ^* = X_\Theta (\bm{1} -Q)$. Now if
$f \in K^2 _\Theta \ominus Q K^2 _\Theta$,
then since $0 = \ip{f}{\delta _{0} ^{b_i} } _\Theta$ for all $1\leq i \leq n$, it follows that $f(0) =0$. Hence
$f(z) = z g(z)$ for some $g \in H^2 _n (\bm{D})$. Moreover since $K^2 _\Theta$ is invariant for $S^*$, it follows that
$S^*f = g \in K^2 _\Theta$. Hence for any $\vec{x} \in \bm{C} ^n$ and any $\la \in \bm{D}$ or $\la \in \bm{T}$ for which
the angular derivative of $\Theta $ at $\la$ exists,
\be \ip{X_\Theta f}{\delta _\la ^{\vec{x}}} _\Theta = ( g(\la ) , \vec{x} )  = \ov{\la} \ip{f}{\delta _\la ^{\vec{x}}} _\Theta.\ee
It follows that
\be X_\Theta (\bm{1} -Q) f = X_\Theta f = \sum _{n \in \bm{Z}} \ov{\la} _n \frac{\ip{f}{\delta _n} _\Theta \delta _n}{\| \delta _n \| ^2 }
= N^* f .\ee It can be concluded that $N^*$ is a normal extension of $X_\Theta | _{Q ^{\perp} K^2 _\Theta}$, so that $\hat{N} := V_\Theta ^* N^* V_\Theta$ is a normal and contractive extension of $Y_\Theta (0) ^* | _{P ^\perp L^2 _\Theta}$.

Now $Y_\Theta ^* $ is a co-isometry, and by the Wold decomposition it can be decomposed into the direct sum of a unitary operator,
and a purely co-isometric operator (an operator isomorphic to the direct sum of copies of the adjoint of the unilateral shift). If $Y_\Theta ^*$ had
a non-zero purely co-isometric part, then it would have non-zero Fredholm index. However, $\hat{N} ^*$ is a normal finite rank perturbation of $Y_\Theta ^*$, and hence is Fredholm. Any normal Fredholm operator must have index zero. Since the index is invariant under
compact perturbations, $Y_\Theta ^*$ also has index $0$ and hence $Y_\Theta ^*$ is unitary. It follows that $H^2 _\Theta= L^2 _\Theta$,
that $Y_\Theta = Z_\Theta$ and that $\Theta$ is extreme.

In conclusion $\hat{N} ^*$ is a normal extension $Z _\Theta (0) ^*$ which is a partial isometry with deficiency indices $(n,n)$. By
Lemma \ref{lemming:unitary}, $\hat{N} ^*$ and hence $\hat{N}$ must be unitary so that $\hat{N} = Z_\Theta (U) $ for some $U \in \mc{U} (n)$.
Since $\hat{N}$ is unitary its spectrum is contained in the unit circle so that $\{ \la _n \} \subset \bm{T}$.
\end{proof}

\begin{thm}
    $K^2 _\Theta$ has a total orthogonal set of point evaluation vectors if and only if there is a $U \in \mc{U} (n)$
such that the measure $\Om _U$ is purely atomic. If $K^2 _\Theta$ has such a set then $\Theta $ is inner.
\end{thm}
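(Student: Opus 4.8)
The plan is to prove both implications by transporting the problem, via the unitary equivalences already established, to the operator of multiplication by $z$ on $L^2 _{\Theta _U}$, for which atomicity of $\Om _{\Theta _U}$ and pure point spectrum are manifestly equivalent. The assertion that $\Theta$ is inner will then be extracted from the explicit formula for the absolutely continuous density of $\Om _{\Theta _U}$.

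First I would set up the chain of equivalences for the forward implication. Suppose $K^2 _\Theta$ carries a total orthogonal set of point evaluation vectors $\La = \{ \delta _i \}$. By Proposition \ref{prop:tot} the function $\Theta$ is extreme and the normal operator $N_\La$ is unitarily equivalent to $Z_\Theta (U)$ for some $U \in \mc{U} (n)$; by Remark \ref{subsubsection:equiv} the latter is in turn unitarily equivalent to $Z_{\Theta _U}$, multiplication by $z$ on $L^2 _{\Theta _U}$. Since the $\delta _i$ form an orthogonal basis of eigenvectors, $N_\La$ has pure point spectrum, and this property is preserved under unitary equivalence; hence $Z_{\Theta _U}$ has an orthonormal basis of eigenvectors. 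For multiplication by $z$ against a matrix measure an eigenvector at $\la$ is exactly a function supported on $\{ \la \}$, so such a basis exists if and only if $\Om _{\Theta _U}$ is carried by its atoms. This gives one direction.

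For the converse, assume $\Om _U := \Om _{\Theta _U}$ is purely atomic, with atoms $\{ \la _i \} \subset \bm{T}$. I would first show $\Theta$ is inner (see below), whence $\Theta _U = \Theta U^*$ is inner and therefore extreme, so that $V_{\Theta _U} : L^2 _{\Theta _U} = H^2 _{\Theta _U} \rightarrow K^2 _{\Theta _U} = K^2 _\Theta$ is unitary. Because $\Om _U$ is purely atomic, $Z_{\Theta _U}$ has an orthonormal basis of eigenvectors, each of the form $\delta _{\{ \la \} } \vec{x}$ with $\vec{x}$ in the range of the projection $\Om _U [ \{ \la \} ]$ (Proposition \ref{prop:evalues}). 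The key computation is that $V_{\Theta _U}$ sends each such eigenvector to a point evaluation vector: since $\Om _U [ \{ \la \} ] \vec{x} = \vec{x}$ one finds $C_{\Theta _U} ( \delta _{\{ \la \} } \vec{x} ) (z) = (1 - \ov{\la} z) ^{-1} \vec{x}$, and the eigenvector condition $U^* \vec{x} = \Theta (\la ) ^* \vec{x}$ yields
\be V_{\Theta _U} ( \delta _{\{ \la \} } \vec{x} ) (z) = \frac{\bm{1} - \Theta (z) U^*}{1 - \ov{\la} z} \vec{x} = \frac{\bm{1} - \Theta (z) \Theta (\la ) ^*}{1 - \ov{\la} z} \vec{x} = \delta _\la ^{\vec{x}} (z) . \ee
As $V_{\Theta _U}$ is unitary, the images of the orthonormal eigenbasis form a total orthogonal set of point evaluation vectors in $K^2 _\Theta$.

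It remains to show that atomicity of $\Om _U$ forces $\Theta$ to be inner, which is the main point still to be handled and also establishes the final clause of the theorem. Specialising the formula for the absolutely continuous density (equivalently, the boundary value of $\re{B_{\Theta _U}}$) to $\Theta _U = \Theta U^*$, the density of $\Om _U$ with respect to $m$ is $W_{\Theta _U} (\zeta ) = (\bm{1} - \Theta (\zeta ) U^* ) ^{-1} (\bm{1} - \Theta (\zeta ) \Theta (\zeta ) ^* ) (\bm{1} - U \Theta (\zeta ) ^* ) ^{-1}$, valid wherever $\bm{1} - \Theta (\zeta ) U^*$ is invertible. Since $\Theta$ is purely contractive, $\| \Theta (z) U^* \| < 1$ for $z \in \bm{D}$, so $\det (\bm{1} - \Theta (z) U^* )$ is a bounded analytic function that is nonvanishing on $\bm{D}$; being a nonzero element of $H^\infty (\bm{D})$, its boundary values vanish only on a set of Lebesgue measure zero, so $\bm{1} - \Theta (\zeta ) U^*$ is invertible for a.e. $\zeta \in \bm{T}$. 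As purely atomic $\Om _U$ has trivial absolutely continuous part, $W_{\Theta _U} = 0$ a.e., and invertibility of the outer factors forces $\bm{1} - \Theta (\zeta ) \Theta (\zeta ) ^* = 0$, i.e. $\Theta (\zeta )$ unitary, for a.e. $\zeta$; this is precisely innerness. The main obstacle is arranging this last step so that the exceptional set on which $\bm{1} - \Theta U^*$ degenerates (which is exactly where the atoms of $\Om _U$ live) is correctly controlled to have measure zero via the $H^\infty$ uniqueness theorem, thereby separating the degenerate points from the a.e. argument that yields $\bm{1} - \Theta \Theta ^* = 0$.
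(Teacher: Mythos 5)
Your proposal is correct and follows essentially the same route as the paper's proof: the forward direction via Proposition \ref{prop:tot} together with the unitary equivalence $Z_\Theta (U) \cong Z_{\Theta _U}$ of Remark \ref{subsubsection:equiv}, the converse via the computation that $V_{\Theta _U}$ carries the point-mass eigenvectors $\delta _{\{ \la _i \}} \vec{x} _i ^j$ to the kernel vectors $\delta _{\la _i} ^{\vec{x} _i ^j}$ using $\vec{x} _i ^j \in \ker{\Theta (\la _i )^* - U^*}$, and the innerness claim via the formula for the absolutely continuous density of $\Om _{\Theta _U}$ (the paper phrases this last step contrapositively, you phrase it directly with an added determinant/$H^\infty$ argument for a.e.\ invertibility of $\bm{1} - \Theta (\zeta ) U^*$). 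Your one organizational difference is an improvement in rigor: by establishing innerness first, you justify that $\Theta _U$ is extreme and hence that $V_{\Theta _U}$ is unitary on all of $L^2 _{\Theta _U} = H^2 _{\Theta _U}$, which is needed for the images of the eigenbasis to be total in $K^2 _\Theta$ --- a point the paper's converse leaves implicit.
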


\begin{proof}
    If $K^2 _\Theta$ has a total orthogonal set of point evaluation vectors $\{ \delta _i \}$, where $\delta _i = \delta _{\la _i}
^{\vec{x} _i}$ then by the
previous proposition, there is a $U \in \mc{U} (n)$ such that $Z _\Theta (U)$ has a total orthogonal set of
eigenfunctions. Therefore $Z_{\Theta _U}$ which acts as multiplication by $z$ in $H^2 _{\Theta _U} = L^2 _{\Theta _U}$
has $\{ \delta _{ \{ \la _i \}  } \vec{x} _i \}$ as a total orthogonal set of eigenfunctions, and the measure
$\Om _{\Theta _U} = \Om _U = \sum _{n \in \bm{Z}} \Om _U ( \{ \la _i \} ) \delta _{ \{ \la _i \} } $ is purely atomic.

Conversely if $\Om _U  = \sum _{n \in \bm{Z}} \Om _U ( \{ \la _i \} ) \delta _{ \{ \la _i \} } $ is purely atomic then
$\{ \delta _{ \{ \la _i \} } \vec{x} _i ^j \} _{1 \leq j \leq k_i ; \ i \in \bm{Z}}$ where $\{ \vec{x} _i ^j \} _{j=1} ^{k_i}$
is an ON basis for $\Om _U \left( \{ \la _i \} \right) \bm{C} ^n$, and $k_i \leq n$, is a total orthogonal set of eigenvectors to $Z_{\Theta _U}$.
Note here that each $\Om _U \left( \{ \la _i \} \right)$ is a projection.
Under the canonical unitary transformation $V_{\Theta _U} : H^2 _{\Theta _U} \rightarrow K^2 _\Theta$,
\be V_{\Theta _U} \delta _{ \{ \la _i \} } \vec{x} _i ^j (z) = (1 -\Theta _U (z)) \int _{\bm{T}} \frac{ \delta _{ \{\la _i \} } (w) }{ 1 - z\ov{w}}
\Om _{\Theta _U} (dw ) \cdot \vec{x} _i ^j =  \frac{\bm{1} - \Theta (z) U^*}{1 - z \ov{\la _i} } \vec{x} _i ^j.\ee By Proposition \ref{prop:evalues},
$\vec{x} _i ^j \in \ker{\Theta (\la _i) ^* -U^*}$, so that $V_{\Theta _U} \delta _{\{ \la _i \} } \vec{x} _i ^j = \delta _{\la _i } ^{\vec{x} _i ^j}$. We conclude that
$\{ \delta _i \}$ where $\delta _i = \delta _{\la _i } ^{\vec{x} _i ^j }$ is a total orthogonal set of point evaluation
vectors in $K^2 _\Theta$.

If $\Theta $ is not inner, then there is a set $I \subset \mr{Bor} (\bm{T})$ with $m ( I ) > 0$ such that $\Theta (z)$ is not
unitary for $z \in I$. Let $\Om _{\Theta _U} ^a$ denote the absolutely continuous part of $\Om _{\Theta _U}$ with respect
to $m$. Then \be \left( \bm{1} - \Theta (z) U^* \right) ^{-1} \left( \bm{1} -\Theta (z) \Theta (z) ^* \right) \left(  \bm{1} - U \Theta (z)^*  \right) ^{-1} =
\int _\bm{T} \frac{1 - |z| ^2 }{|1 -z \ov{\zeta }  | ^2 } \Om _{\Theta _U} (d\zeta ) ,\ee for $z \in \bm{D}$, and
\be \frac{d\Om _{\Theta _U} ^a }{dm} (\zeta) = \left( \bm{1} - \Theta (\zeta ) U^* \right) ^{-1} \left( \bm{1} -\Theta (\zeta) \Theta (\zeta) ^* \right) \left(  \bm{1} - U \Theta (\zeta )^*  \right) ^{-1} ,\ee almost everywhere $\zeta \in \bm{T}$ with respect to Lebesgue measure. For a proof of this fact in the matrix setting, see \cite[Theorem 9]{Elliott}.
Hence if $\Theta$ is not inner, $\Om _{\Theta _U}$ cannot be purely atomic for any $U \in \mc{U} (n)$ so that $K^2 _\Theta$ cannot have a total orthogonal set of point evaluation
vectors.
\end{proof}

\section{Representation of simple symmetric operators with deficiency indices $(n,n)$}

\label{section:symrep}

    In this final section, we wish to point out that any simple symmetric operator with deficiency
indices $(n,n)$ is unitarily equivalent to the symmetric operator of multiplication by the independent variable
in a model subspace $K^2 _\Phi$ where $\Phi \in H^\infty _{\bm{M} _n} (\bm{U})$ is inner, $\Phi (i) =0$ and $\Phi$
is analytic on some open neighbourhood of any given point $x \in \bm{R}$.  We will see that such $K^2 _\Phi$ have a
$\mc{U} (n) -$parameter family of total orthogonal sets of point evaluation vectors. Recall that $\bm{U}$ denotes
the open upper half plane, and $H^\infty _{\bm{M} _n} (\bm{U})$ is the Hardy space of bounded analytic $\bm{M} _n$-valued
functions on $\bm{U}$.

There is a bijective correspondence between $\Phi \in H^\infty _{\bm{M} _n} (\bm{U} )$ and $\Theta \in H^\infty _{\bm{M} _n}
(\bm{D} )$ given by $\Phi = \Theta \circ \mu $ and $\Theta = \Phi \circ \mu ^{-1}$ where $\mu (z) = \frac{z-i}{z+i}$ and
$\mu ^{-1} (z) = i \frac{1+z}{1-z}$. Further recall that there is a canonical unitary transformation $\mc{U} : H^2 _n (\bm{D})
\rightarrow H^2 _n (\bm{U} )$ given by
\be \mc{U} f(z) = \frac{1 - \mu (z)}{\sqrt{\pi}} f \circ \mu (z) ,\ee and that $\mc{U}$ takes $K ^2 _\Theta $ onto $K^2 _\Phi$.

\subsection{Representation of simple symmetric linear transformations with deficiency indices $(n,n)$}

    Recall the Lifschitz characteristic
function of a simple isometric linear transformation $V$. Here $\dom{V}$ and $\ran{V}$ are contained in a separable
Hilbert space $\mc{H}$. Let $\mf{D} _+ := \dom{V} ^\perp$ and $\mf{D} _- := \ran{V} ^\perp$,
fix a unitary extension $U$ of $V$ and let $(\psi _i ^\pm ) _{i=1} ^n$ be orthonormal bases of $\mf{D} _\pm$ such that
$U \psi _i ^+ = \psi _i ^-$. Given $W \in \mc{U} (n)$, we define $ V(W) := V \oplus \sum _{ij} \ip{\cdot}{\psi _i ^+} W_{ij} \psi _i ^-$
on $\mc{H} := \dom{V} \oplus \mf{D} _+$, so that $\{ V(W) \} _{W \in \mc{U} (n)}$ is the $\mc{U} (n)$-parameter
family of unitary extensions of $V$.

\subsubsection{Definition} Fix $U \in \mc{U} (n)$. For $1\leq i,k \leq n$, let $A, B$ be matrix valued functions on $\bm{D}$ with entries $A_{ik} (z)
= z \ip{(V(U) -z) ^{-1} \psi _i ^+ }{ \psi _k ^+}$, $B_{ik} (z) := \ip{(V(U) -z) ^{-1} V(U) \psi _i ^+}{\psi _k ^+}$. The Lifschitz
characteristic function of the simple isometric linear transformation $V$ is defined as $\Theta _V (z) := A(z) B(z) ^{-1}$.

    One can show that $\Theta _V (z)$ is always a purely contractive matrix analytic function on $\bm{D}$ with $\Theta _V (0) = 0$. Two
contractive matrix analytic functions on $\bm{D}$, $\Theta _1$ and $\Theta _2$ are said to coincide if there are fixed unitaries $U,V$ in $\mc{U} (n)$
such that $U\Theta _1 = \Theta _2 V$. In \cite{Lifschitz2}, it is shown that two simple isometric linear transformations $V_1, V_2$ are unitarily
equivalent if and only if their characteristic functions coincide.  Moreover one can show that choosing a different $U$ in the definition
of $\Theta _V$ yields another purely contractive function which coincides with the original so that $\Theta _V$ is unique up to such
coincidence. Now given $\Theta _V$, consider the operator $Z_{\Theta _V}$ of multiplication by the independent variable in $L^2 _{\Theta _V}$.
As discussed in the beginning of Section \ref{subsection:spectra}, the transformation $Z_{\Theta _V} ' = Z_{\Theta _V} | _{L^2 _{\Theta_V} \ominus \mf{D} _+}$
is a simple isometric linear transformation with deficiency indices $(n,n)$. It is not difficult to show that the characteristic
function of $Z _{\Theta _V} ' $ is $\Theta _V $ so that $V$ is always unitarily equivalent to $Z _{\Theta _V}$.

\begin{thm}{ (Lifschitz)}
    Any simple isometric linear transformation $V$ with deficiency indices $(n,n)$ is unitarily equivalent to $Z_{\Theta _V} ' $,
which acts as multiplication by the independent variable on $\dom{Z _{\Theta _V} '} = L^2 _{\Theta _V} \ominus \mf{D} _+$. \label{thm:represent}
\end{thm}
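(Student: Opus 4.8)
The plan is to reduce the theorem to a single resolvent computation and then invoke the Lifschitz equivalence criterion already quoted from \cite{Lifschitz2}: two simple isometric linear transformations are unitarily equivalent if and only if their characteristic functions coincide. As noted at the start of Subsection \ref{subsection:spectra}, for any purely contractive $\Theta$ with $\Theta(0)=0$ the transformation $Z_\Theta' = Z_\Theta|_{L^2_\Theta \ominus \mf{D}_+}$ is a simple isometric linear transformation with deficiency indices $(n,n)$, and $\Theta_V$ is itself such a function. Thus it suffices to prove that, for such a $\Theta$, the characteristic function of $Z_\Theta'$ coincides with $\Theta$, and then to apply this with $\Theta=\Theta_V$: the transformations $V$ and $Z_{\Theta_V}'$ will then have coinciding characteristic functions, hence be unitarily equivalent.

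First I would evaluate the characteristic function of $Z_\Theta'$ using the most convenient unitary extension in its definition, namely $U=Z_\Theta$ acting on all of $L^2_\Theta$, together with the orthonormal bases $\psi_i^+=b_i^+$ and $\psi_i^-=b_i^-$ of $\mf{D}_+$ and $\mf{D}_-$. Since $b_i^+(\zeta)=\ov{\zeta}\,b_i^-(\zeta)$ on $\bm{T}$, we have $Z_\Theta b_i^+=b_i^-$, so the normalization $U\psi_i^+=\psi_i^-$ required by the definition holds; and because $\Theta_V$ is independent of the chosen extension up to coincidence, this entails no loss of generality. For $z\in\bm{D}$ the resolvent $(Z_\Theta-z)^{-1}$ is multiplication by $(\zeta-z)^{-1}$, so, after using $\zeta\ov{\zeta}=1$, the matrix entries $A_{ik}(z)$ and $B_{ik}(z)$ collapse to the scalar integrals $z\int_\bm{T}(\zeta-z)^{-1}(\Om_\Theta(d\zeta)e_i,e_k)$ and $\int_\bm{T}\zeta(\zeta-z)^{-1}(\Om_\Theta(d\zeta)e_i,e_k)$ respectively.

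The computation now turns on two observations. Subtracting integrands gives $\zeta/(\zeta-z)-z/(\zeta-z)=1$, so $B(z)-A(z)=\Om_\Theta(\bm{T})=\bm{1}_n$ (here $\Theta(0)=0$ is used). To identify $B$ I would appeal to the Herglotz representation (\ref{eq:glotz}), which for $\Theta(0)=0$ reduces to $(\bm{1}+\Theta(z))(\bm{1}-\Theta(z))^{-1}=\int_\bm{T}\frac{\zeta+z}{\zeta-z}\Om_\Theta(d\zeta)$; writing $\frac{\zeta+z}{\zeta-z}=1+\frac{2z}{\zeta-z}$ and using $\Om_\Theta(\bm{T})=\bm{1}$ shows the right-hand side equals $\bm{1}+2A(z)$, so that $A(z)=\Theta(z)(\bm{1}-\Theta(z))^{-1}$ and hence $B(z)=A(z)+\bm{1}=(\bm{1}-\Theta(z))^{-1}$. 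Because $\|\Theta(z)\|<1$ on $\bm{D}$, $B(z)$ is invertible with $B(z)^{-1}=\bm{1}-\Theta(z)$, and therefore $A(z)B(z)^{-1}=\Theta(z)$. Thus the characteristic function of $Z_\Theta'$ is $\Theta$, and the coincidence criterion yields $V\cong Z_{\Theta_V}'$; the latter acts as multiplication by the independent variable on $L^2_{\Theta_V}\ominus\mf{D}_+$ by its very definition.

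The step I expect to be the main obstacle is the bookkeeping inside the resolvent computation: faithfully translating the pairings $\ip{\cdot}{\cdot}_\Theta$ into scalar integrals against $\Om_\Theta$ while keeping the $\bm{M}_n$-index conventions consistent (pairing against the basis $\{e_i\}$ introduces a transposition that must be reconciled with the definition of $\Om_\Theta$), and confirming that $B(z)$ is genuinely invertible throughout $\bm{D}$ so that $A(z)B(z)^{-1}$ is defined and analytic, including at $z=0$ where $A(0)=0$ matches $\Theta(0)=0$. Everything else is a direct appeal to (\ref{eq:glotz}), the normalization $\Om_\Theta(\bm{T})=\bm{1}_n$, and Lifschitz's coincidence criterion.
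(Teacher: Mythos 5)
Your proposal is correct and follows essentially the same route as the paper: both reduce the theorem to showing that the Lifschitz characteristic function of $Z_\Theta '$ equals $\Theta$ by computing the resolvent pairings $z \ip{(Z_\Theta -z)^{-1} b_i ^+}{b_k ^+} _\Theta$ and $\ip{Z_\Theta (Z_\Theta -z)^{-1} b_i ^+}{b_k ^+} _\Theta$ against the Herglotz representation of $B_\Theta$ (using $\Theta (0) = 0$, so $\Om _\Theta (\bm{T}) = \bm{1}$), and then invoke Lifschitz's coincidence criterion. The only cosmetic difference is that you solve for $A(z) = \Theta (z) (\bm{1} - \Theta (z))^{-1}$ and $B(z) = (\bm{1} - \Theta (z))^{-1}$ individually, whereas the paper packages the identical algebra as $2A = B_\Theta - \bm{1}$, $2B = B_\Theta + \bm{1}$, and $\Theta = (B_\Theta - \bm{1})(B_\Theta + \bm{1})^{-1}$.
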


\begin{proof}
    Let $\Theta := \Theta _V$. As in the proof of Proposition \ref{prop:evalues}, it is easy to check that
\be \left( (B_\Theta - \bm{1} ) e_i , e_k \right) = 2z \int _\bm{T} \frac{1}{\zeta - z} \left( \Om _\Theta (d\zeta) e_i , e_k \right)
= 2z \ip{ (Z_\Theta -z ) ^{-1} b _i ^+}{b _k ^+} _\Theta, \ee and similarly that
\be \left( (B_\Theta + \bm{1}) e_i , e_k \right) = 2 \ip{ Z_\Theta (Z_\Theta -z) ^{-1} b _i ^+}{b _k ^+} _\Theta. \ee
This shows that the Lifschitz characteristic function of $Z_\Theta '$ coincides with $\Theta$ since
$\Theta = (B_\Theta - \bm{1} ) (B_\Theta + \bm{1} ) ^{-1}$.
\end{proof}

    There is a bijective correspondence between simple isometric linear transformations $V$ and
simple symmetric linear transformations $B$ given by $B = \mu ^{-1} (V)$ with $\dom{B} := (V - \bm{1} ) \dom{V}$ and
$V = \mu (B)$ with $\dom{V} = \ran{B+i}$. Recall here that a symmetric linear transformation is called simple
if it has no self-adjoint restriction to a proper subspace.

The following provides necessary and sufficient conditions on $\Theta $ for the symmetric linear transformation
$\mu ^{-1} (Z ' _\Theta )$ to be a densely defined symmetric operator. This is a straightforward generalization
of a result of Lifschitz for the case $n=1$, and the proof is virtually identical.

\begin{lemming}
    Let $V$ be a simple isometric linear transformation with deficiency indices $(n,n)$. Then $B = \mu ^{-1} (V)$
is a densely defined symmetric operator if and only if $z=1$ is not an eigenvalue of any unitary extension of $V$.
\end{lemming}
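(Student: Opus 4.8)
The plan is to work through the Cayley transform explicitly and reduce the density of $\dom{B}$ to a statement about $\ran{\bm{1}-V}^\perp$, which I will then identify with the existence of a fixed vector of some unitary extension of $V$. First I would record that $B=\mu^{-1}(V)=i(\bm{1}+V)(\bm{1}-V)^{-1}$ has domain $\dom{B}=(V-\bm{1})\dom{V}=\ran{\bm{1}-V}$, and that $B$ is always a single-valued symmetric linear transformation: since $V$ is simple, $\bm{1}-V$ is injective on $\dom{V}$, because a nonzero $f\in\dom{V}$ with $Vf=f$ would make $\bm{C}f$ a one-dimensional reducing subspace on which $V$ restricts to a unitary, contradicting simplicity. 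Symmetry of $B$ is the standard Cayley transform computation. Hence $B$ is a densely defined operator if and only if $\ran{\bm{1}-V}$ is dense, i.e. if and only if $\ran{\bm{1}-V}^\perp=\{0\}$.

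Next I would note that $g\perp\ran{\bm{1}-V}$ precisely when $\ip{Vf}{g}=\ip{f}{g}$ for all $f\in\dom{V}$, and the heart of the argument is the equivalence: a nonzero such $g$ exists if and only if some unitary extension $U$ of $V$ satisfies $Ug=g$. The easy direction is immediate: if $U\phi=\phi$ for a unitary extension $U$ and $\phi\neq 0$, then also $U^*\phi=\phi$, so for $f\in\dom{V}$ one has $\ip{(\bm{1}-V)f}{\phi}=\ip{f}{\phi}-\ip{Uf}{\phi}=\ip{f}{\phi-U^*\phi}=0$, using $Uf=Vf$ on $\dom{V}$. Thus $\phi\perp\ran{\bm{1}-V}$ and $B$ fails to be densely defined, which gives one implication by contraposition.

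For the converse, which I expect to be the main obstacle, I would start from a nonzero $g\perp\ran{\bm{1}-V}$ and manufacture a unitary extension fixing it. Writing $P_{\ran{V}}g=Vf_0$ for the unique $f_0\in\dom{V}$, the orthogonality condition forces $g-f_0\in\mf{D}_+$, producing two decompositions $g=f_0+u=Vf_0+w$ with $u\in\mf{D}_+$ and $w\in\mf{D}_-$. Computing $\|g\|^2$ from each decomposition and using that $V$ is isometric yields $\|u\|=\|w\|$, and $u\neq 0$ (otherwise $\|w\|=0$ and $g=Vg$ would again contradict simplicity). I would then define a unitary $A:\mf{D}_+\rightarrow\mf{D}_-$ by sending $u\mapsto w$ and extending by any unitary between the $(n-1)$-dimensional complements $(\bm{C}u)^\perp\cap\mf{D}_+$ and $(\bm{C}w)^\perp\cap\mf{D}_-$; then $U:=V\oplus A$ is a unitary extension of $V$ with $Ug=Vf_0+Au=Vf_0+w=g$, so $z=1$ is an eigenvalue. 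Combining both directions gives that $B=\mu^{-1}(V)$ is a densely defined symmetric operator if and only if $z=1$ is not an eigenvalue of any unitary extension of $V$. The delicate points to watch are the bookkeeping of the two decompositions of $g$ and the verification of $\|u\|=\|w\|$ together with $u\neq 0$, which is exactly what makes the partial isometry $u\mapsto w$ extend to a unitary $A$.
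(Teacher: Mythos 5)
Your proof is correct, and both directions hold up under scrutiny: the reduction of dense definability to $\ran{\bm{1}-V}^\perp=\{0\}$, the verification that the orthogonality condition forces $g-f_0\in\mf{D}_+$ where $Vf_0 = P_{\ran{V}}g$, the norm identity $\|u\|=\|w\|$, the use of simplicity to get $u\neq 0$, and the gluing $U=V\oplus A$ with $Au=w$ are all sound. The route, however, differs from the paper's in the hard direction. The paper starts from $\xi\perp\ran{V-\bm{1}}$ and first manufactures a \emph{not-necessarily-unitary} extension $V(A)=V\oplus\hat{A}$ satisfying $V(A)^*\xi=\xi$, by choosing $\hat{A}$ so that $\ip{(\hat{A}-\bm{1})\psi_i^+}{\xi}=0$ for all $i$; simplicity enters there to show that the component $\psi^-$ of $\xi$ in $\ran{V}^\perp$ is nonzero, which is what makes that choice of $\hat{A}$ possible. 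Only afterwards does a norm computation (comparing $\|\xi\|^2$ with $\|V(A)^*\xi\|^2$ through the direct-sum structure) show that $\|\hat{A}^*\psi^-\|=\|\psi^-\|$, so that $\hat{A}^*$ can be replaced by a unitary $\hat{U}^*$ agreeing with it on $\psi^-$, giving $V(U)^*\xi=\xi$ and hence $V(U)\xi=\xi$. You bypass the non-unitary intermediary and the passage to adjoints entirely: the two orthogonal decompositions $g=f_0+u$ and $g=Vf_0+w$, with the \emph{same} $f_0$ in both (which is exactly what $g\perp\ran{\bm{1}-V}$ encodes), yield the norm matching for free, your use of simplicity to force $u\neq 0$ is equivalent to the paper's use of it to force $\psi^-\neq 0$ (given $\|u\|=\|w\|$), and the unitary extension fixing $g$ is written down in one stroke. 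What your argument buys is transparency and brevity: it isolates the structural fact that a vector is orthogonal to $\ran{\bm{1}-V}$ precisely when its two deficiency-space components can be matched by a unitary from $\mf{D}_+$ to $\mf{D}_-$. What the paper's two-stage argument buys is consistency with its running formalism of extensions $V(A)$ parametrized by arbitrary matrices $A$ (the same device as $Z_\Theta(A)$ elsewhere in the paper), at the cost of an extra step. Both proofs are complete; yours is the cleaner of the two.
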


\begin{proof}
    If $\mu ^{-1} (V)$ is densely defined, then $\ran{V - \bm{1}}$ is dense so that if $U$
is any unitary extension of $V$, then
$\ran{U -\bm{1}} \supset \ran {V - \bm{1}}$ is also dense. This can only happen if $z=1$ is not an eigenvalue
of any unitary extension $U$.

Conversely if $\mu ^{-1} (V)$ is not densely defined then there is a $\xi \in \mc{H}$ such that
$\ip{ (V- \bm{1} )\psi }{\xi } = 0 $ for all $\psi \in \dom{V}$. Let $\{ \psi _i ^+ \} $ and $\{ \psi _i ^- \}$
be ON bases for $ \dom{V} ^\perp $ and $\ran{V} ^\perp $ respectively. For $A \in \bm{M} _n$ define
$V (A) := V \oplus \hat{A}$ on $\mc{H} = \dom{V} \oplus \dom{V} ^\perp$ where $\hat{A} : \dom{V} ^\perp \rightarrow
\ran{V} ^\perp$ is given by $\hat{A} = \sum _{i,j =1} ^n A_{ij} \ip{\cdot}{\psi _i ^+} \psi _j ^-$.

Given any $\psi \in \mc{H} = \dom{V} \oplus \dom{V} ^\perp$, $\psi = \psi _V + \sum _{i=1} ^n c_i \psi _i ^+$ where $\psi _V \in \dom{V}$.
Hence, \be \ip{ (V(A) - \bm{1} ) \psi }{\xi} = \ip{(V-1) \psi _V }{\xi} + \sum _{i=1} ^n c_i
\ip{ (\hat{A} - \bm{1} ) \psi _i ^+ }{\xi}. \ee

Now $\xi$ is not orthogonal to $\ran{V} ^\perp$, as otherwise there would exist a
$\xi ' \in \dom{V}$ such that $V \xi ' = \xi $. This would imply that
\be 0 = \ip{(V-\bm{1} ) \psi }{V \xi ' } = \ip{\psi }{(1-V) \xi ' } ,\ee for all $\psi \in \dom{V}$
so that $(V - 1 ) \xi ' \in \dom{V} ^\perp$. The fact that $\xi ' $ is orthogonal to $\dom{V} ^\perp$
and that $\| V \xi ' \|  = \| \xi \| $ would then imply that $V \xi ' = \xi '$, contradicting the simplicity
of $V$. We conclude that $\xi $ is not orthogonal to $\ran{V} ^\perp $, so that we can choose $A$ so that
$\ip{(\hat{A} - \bm{1} ) \psi _i ^+ }{\xi } =0$.

It follows that for this choice of $A$, $\ip{(V(A) -\bm{1} ) \psi }{\xi} = \ip{(V-\bm{1}) \psi _V }{\xi} =0$ for
all $\psi \in \mc{H}$ so that $V(A)^* \xi = \xi$. Now $\xi = \xi _V ^* + \psi ^-$ where $\xi _V ^* \in \ran{V}$
and $\psi  ^- \in \ran{V} ^\perp$, and $V(A) ^* = V^* \oplus \hat{A} ^* $ on $\mc{H} = \ran{V} \oplus \ran{V} ^\perp$.
A simple calculation shows
\be \| \xi _V ^* \| ^2 + \| \psi ^- \| ^2 = \| \xi \| ^2 = \| V(A) ^* \xi \| ^2  = \| V ^* \xi _V ^* \| ^2
+ \| \hat{A} ^* \psi ^- \| ^2 = \| \xi _V ^* \| ^2  + \| \hat{A} ^* \psi ^- \| ^2 , \ee so that $\| \hat{A} ^* \psi ^- \|
= \| \psi ^- \|$. It follows that we can choose $U \in \mc{U} (n)$ such that $\hat{U} ^* \psi ^- = \hat{A} ^* \psi ^-$,
and that with this choice of $U$, $V(U)$ is a unitary extension of $V$ with $z=1$ as an eigenvalue.
\end{proof}

\begin{thm}
    The simple symmetric linear transformation $\mu ^{-1} ( Z' _\Theta ) $ will be a densely defined
simple symmetric operator if and only if the limit of $(1 - z)  U ( U -\Theta (z) ) ^{-1}$ as $z$
approaches $1$ non-tangentially vanishes for all $U \in \mc{U} (n)$. This happens if and only if
the angular derivative of $\Theta \vec{x}$ at $z=1$ does not exist for any $\vec{x} \in \bm{C} ^n$.

\label{thm:dense}
\end{thm}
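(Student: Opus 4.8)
The plan is to deduce this theorem directly from the lemma immediately preceding it, combined with part (2) of Proposition \ref{prop:evalues} specialized to the point $\la = 1$. First I would recall the setup from the beginning of Subsection \ref{subsection:spectra}: $Z_\Theta ' = Z_\Theta | _{\mf{D} _+ ^\perp}$ is a simple isometric linear transformation with deficiency indices $(n,n)$, and its $\mc{U} (n)$-parameter family of unitary extensions is exactly $\{ Z_\Theta (U) \} _{U \in \mc{U} (n)}$. Since simplicity is preserved under the bijective Cayley correspondence $B = \mu ^{-1} (V)$, the transformation $\mu ^{-1} (Z ' _\Theta )$ is automatically simple symmetric; the only issue to settle is when it is densely defined as an operator.

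By the preceding lemma, $\mu ^{-1} (Z ' _\Theta )$ is densely defined if and only if $z = 1$ is not an eigenvalue of any unitary extension of $Z ' _\Theta$, that is, if and only if $1 \notin \sigma _p (Z_\Theta (U))$ for every $U \in \mc{U} (n)$. I would then invoke Proposition \ref{prop:evalues}(2) at $\la = 1$ (its hypotheses are met, since $\Theta (0) = 0$ in this setting and $1 \in \bm{T}$, with $\ov{\la} = 1$): for each fixed $U$, the value $1$ fails to be an eigenvalue of $Z_\Theta (U)$ precisely when $\lim _{z \stackrel{nt}{\rightarrow} 1} (1 - z) U (U - \Theta (z)) ^{-1} = 0$, and quantifying over all $U$, the vanishing of this limit for every $U \in \mc{U} (n)$ is in turn equivalent to the non-existence of the angular derivative of $\Theta \vec{x}$ at $z = 1$ for every $\vec{x} \in \bm{C} ^n$. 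Chaining these equivalences produces exactly the three-way characterization asserted in the theorem.

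Since the genuinely analytic content has already been absorbed into Proposition \ref{prop:evalues} and the angular-derivative theorem (Theorem \ref{thm:angderv}), I do not expect a substantial obstacle here. The only points demanding a little care are bookkeeping at the boundary point $\la = 1$: one should confirm that no extra regularity hypothesis such as $1 \notin \mr{sp} (\Theta)$ is required, which holds because part (2) of Proposition \ref{prop:evalues} is stated without the regular-point restriction carried by part (1), and one should be explicit that the quantifier over $U \in \mc{U} (n)$ in the density criterion matches the quantifier in the angular-derivative criterion. With these observations the theorem follows as an immediate corollary.
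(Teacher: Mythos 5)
Your proposal is correct and follows exactly the paper's own route: the paper's proof of Theorem \ref{thm:dense} is precisely the one-line observation that it is an immediate consequence of the preceding lemma together with Proposition \ref{prop:evalues}. Your additional bookkeeping (identifying the unitary extensions of $Z'_\Theta$ with the family $Z_\Theta(U)$, handling the quantifier over $U \in \mc{U}(n)$, and noting that part (2) of Proposition \ref{prop:evalues} carries no regular-point restriction) only makes explicit what the paper leaves implicit.
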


\begin{proof}
    This is an immediate consequence of the previous lemma and Proposition \ref{prop:evalues}.
\end{proof}

\subsection{Regular simple symmetric operators with deficiency indices $(n,n)$}

    Now suppose that $B$ is a simple symmetric linear transformation on $\mc{H}$
with deficiency indices $(n,n)$. Such a linear transformation is called regular if
$B-z$ is bounded below for all $z \in \bm{C}$. The isometric linear transformation
$V = \mu (B)$ is called the Cayley
transform of $B$. This $V$ is a simple regular isometric linear transformation with deficiency
indices $(n,n)$. Here an isometric linear transformation is called regular if
$V-z$ is bounded below for all $z \in \bm{C} \sm \{ 1 \}$. The fact that $V$ is regular,
and the results of Section \ref{subsection:spectra} show that $\Theta _V$ is inner
and analytic on some neighbourhood of any given point $z \in \bm{T} \sm \{ 1 \}$.

Let $\Theta := \Theta _V$. Since $\Theta _V (0) = 0$, $(Z_\Theta  ' ) ^* $ is unitarily equivalent to $ (X  _ \Theta ) '$,
the isometric linear transformation which
acts as multiplication by $1/z$ on the orthogonal complement of the $n-$dimensional
subspace spanned by the vectors $\{ \delta _0 ^{\vec{x} } | \  \vec{x} \in \bm{C} ^n \} \subset K^2 _\Theta$ of point evaluations
at zero. Since $\Theta _V (0) = 0$, these are the constant functions in $K^2 _\Theta$. Let $\Phi = \Theta \circ \mu$,
and let $M $ be the self-adjoint operator of multiplication by the independent variable in $L^2 _n (\bm{R})$.
Then the image of $(X _\Theta ) ' $ under the canonical unitary map $\mc{U}$ of $K^2 _\Theta$ onto
$K^2 _\Phi \subset H^2 _n (\bm{U} )$ is the isometric linear transformation $\mu ^* (M ) _{\Phi} ' $ which acts
as multiplication by $\mu ^* (z) = \ov{\mu (\ov{z}) } = \frac{z+i}{z-i}$ on the domain of all functions in $K^2 _\Phi$ which vanish at $z=i$. Let
$M  _\Phi := (\mu ^*) ^{-1} ( \mu ^* (M) _\Phi ' ) $. Then $M _\Phi$ is a simple symmetric linear transformation
which acts as multiplication by the independent variable on its domain $\dom{M _\Phi } = \ran{\mu ^* (M) _\Phi ' - \bm{1}}$.

We will say that the inner function $\Phi _B = \Theta \circ \mu$ is the Lifschitz characteristic function
of $B$. Note that since $\Theta _V (0) = 0$, $\Phi _B (i) =0$. Combining these observations with Lifschitz' result,
Theorem \ref{thm:represent}, yields the following:

\begin{thm}
    A simple symmetric linear transformation $B$ with deficiency indices $(n,n)$ and characteristic
function $\Phi _B$ is regular if and only if $\Phi _B \in H^\infty _{\bm{M} _n} (\bm{U} )$ is an inner function which has an analytic extension to an open neighbourhood of any fixed $x \in \bm{R}$. In this case $B$ is unitarily equivalent to $M  _{\Phi _B}$ which acts as multiplication by
the independent variable on $\dom{M _{\Phi _B}} \subset K^2 _\Phi$.
\end{thm}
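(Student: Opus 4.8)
The plan is to reduce the statement to Lifschitz's essential-spectrum criterion, Theorem \ref{thm:esspec}, through the Cayley correspondence between symmetric and isometric transformations. First I would pass from $B$ to its Cayley transform $V = \mu(B)$, a simple isometric linear transformation with deficiency indices $(n,n)$ whose characteristic function is $\Theta := \Theta_V$, with $\Phi_B = \Theta \circ \mu$. The key observation is that regularity of $B$---namely $B - z$ bounded below for every $z \in \bm{C}$, which for a symmetric operator is only a genuine constraint at real $z$---is equivalent to regularity of $V$ in the isometric sense, that $V - \la$ is bounded below for every $\la \in \bm{C} \sm \{1\}$. Indeed $\mu$ carries $\bm{R}$ bijectively onto $\bm{T} \sm \{1\}$ (with $\infty \mapsto 1$), so the real regular points of $B$ match the regular points of $V$ on $\bm{T} \sm \{1\}$, while points off $\bm{T}$ are automatically regular for an isometry.

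Next I would invoke Theorem \ref{thm:esspec}. Since $V \cong Z_\Theta'$ by Theorem \ref{thm:represent}, a point $\zeta \in \bm{T}$ is regular for $V$ precisely when $\Theta$ is analytic on an open neighbourhood of $\zeta$ and $\Theta(\la)$ is unitary for $\la$ near $\zeta$ on $\bm{T}$. Hence $V$ is regular if and only if both conditions of Theorem \ref{thm:esspec} hold at every $\zeta \in \bm{T} \sm \{1\}$. The second condition, imposed on all of $\bm{T} \sm \{1\}$, forces $\Theta$ to be unitary a.e. on $\bm{T}$, i.e. inner; the first gives analytic continuation of $\Theta$ across each such $\zeta$.

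To obtain the half-plane formulation I would transport these two conditions through the conformal map $\mu : \bm{U} \to \bm{D}$. Because $\mu$ sends $\bm{R}$ onto $\bm{T} \sm \{1\}$, analyticity of $\Theta$ across each $\zeta \in \bm{T} \sm \{1\}$ is equivalent to analyticity of $\Phi_B = \Theta \circ \mu$ across each $x \in \bm{R}$, and unitarity of $\Theta$ near $\bm{T} \sm \{1\}$ is equivalent to $\Phi_B$ being inner on $\bm{U}$. This yields the asserted equivalence. For the final unitary equivalence I would chain the identifications already assembled before the theorem: Theorem \ref{thm:represent} gives $V \cong Z_\Theta'$, hence $B = \mu^{-1}(V) \cong \mu^{-1}(Z_\Theta')$; using $\Theta(0) = 0$, the isometry $V_\Theta$ identifies $(Z_\Theta')^*$ with $(X_\Theta)'$ in $K^2_\Theta$, and the canonical unitary $\mc{U} : K^2_\Theta \to K^2_\Phi$ together with the inverse Cayley transform $(\mu^*)^{-1}$ carries this onto $M_{\Phi_B}$, multiplication by the independent variable on $\dom{M_{\Phi_B}} \subset K^2_\Phi$.

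The main obstacle I anticipate is the careful bookkeeping of the excluded boundary point: verifying that regularity of the symmetric $B$ over all of $\bm{C}$ translates cleanly to regularity of $V$ over $\bm{C} \sm \{1\}$, and correspondingly that the analyticity-and-inner condition is demanded only at finite $x \in \bm{R}$ rather than at $\infty$. The remainder is the routine task of composing the several canonical unitaries so that the multiplication operators match, which is immediate from the constructions of Section \ref{section:cauchyint} and the preceding paragraph.
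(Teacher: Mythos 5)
Your proposal is correct and follows essentially the same route as the paper: the paper likewise passes to the Cayley transform $V=\mu(B)$, notes (as part of its definition of regularity) that regularity of $B$ corresponds to regularity of $V$ on $\bm{C}\sm\{1\}$, applies Theorem \ref{thm:esspec} via the identification $V\cong Z_{\Theta_V}'$ of Theorem \ref{thm:represent} to conclude $\Theta_V$ is inner and analytic across each point of $\bm{T}\sm\{1\}$, and then transports everything to $K^2_\Phi$ through the canonical unitaries to obtain $B\cong M_{\Phi_B}$. If anything, you are slightly more explicit than the paper about the backward implication and the bookkeeping at the excluded point $1\in\bm{T}$ (equivalently $\infty$), which the paper leaves implicit in its ``combining these observations'' assembly.
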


\subsubsection{Remark} The result stated above can be generalized to any simple symmetric linear transformation whose characteristic function
$\Phi _B$ is an extreme point, for in this case $H^2 _\Theta = L^2 _\Theta$ (where $\Theta = \Phi \circ \mu ^{-1}$ ),
and the canonical unitary transformations from $H^2 _\Theta$ onto $K^2 _\Theta $ and $K^2 _\Theta $ onto $K^2 _\Phi$
take $\mu ^{-1} (Z' _\Theta)$ onto $M _\Phi$.

\subsubsection{Remark} Theorem \ref{thm:dense} provides necessary and sufficient conditions on $\Phi$
for $M _\Phi$ to be a densely defined symmetric operator.

Now suppose that $\Phi = \Theta \circ \mu$ satisfies the conditions of Theorem \ref{thm:dense} so that $M _\Phi$
is densely defined, and that $\mr{sp} (\Phi ) := \mu ^{-1} \left( \mr{sp} (\Theta ) \right) \subset \{ \infty \}$, so that $M _\Phi$
is regular. Let $M _\Phi (U)$ be the image of $\mu ^{-1} (Z _\Theta (U))$ under the canonical unitary transformation
of $H^2 _\Theta $ onto $K^2 _\Phi$. Then the regularity of $M _\Phi$ implies that the spectrum of each $M _\Phi (U)$
is purely discrete with no finite accumulation point. Hence if $\sigma (M _\Phi (U) ) = \{ \la _i (U) \} \subset \bm{R}$, it
follows that there are vectors $\vec{x} _i (U) \in \bm{C} ^n$ such that the point evaluation vectors
$\{ \delta _{\la _i (U) } ^{\vec{x} _i (U)} \}$ form a total orthogonal set of eigenvectors to $M _\Phi (U)$ for
each $U \in \mc{U} (n)$, $M _\Phi (U) \delta _{\la _i (U) } ^{\vec{x} _i (U) } = \la _i (U) \delta _{\la _i (U) } ^{\vec{x} _i (U) }$.
Here the point evaluation vectors in $K^2 _\Phi$ have the form
\be \delta _\la ^{\vec{x}} (z) = \frac{i}{2\pi} \frac{\bm{1} - \Phi (z) \Phi ^* (\la )}{z - \ov{\la}} \vec{x} .\ee
Moreover if $M _\Phi$ is densely defined then each point evaluation vector $\delta _\la ^{\vec{x}}$ is an eigenvector
to $M _\Phi ^*$. Indeed, given any $f \in \dom{M _\Phi }$,
\be \ip{ M _\Phi f}{\delta _\la ^{\vec{x}} } _\Phi = \la (f (\la ) , \vec{x} ) = \ip{f}{\ov{\la} \delta _\la ^{\vec{x}}} _\Phi ,\ee
which shows that $\delta _\la ^{\vec{x}} \in \dom{ M _\Phi ^*}$ and that $M _\Phi ^* \delta _\la ^{\vec{x}} = \ov{\la} \delta _\la ^{\vec{x}}$.
Here, $\ip{\cdot}{\cdot} _\Phi$ denotes the inner product in $K^2 _\Phi$ (which is the usual $L^2$ inner product since we are
assuming $\Phi$ is inner).

In summary any regular simple symmetric linear transformation $B$ with deficiency indices $(n,n)$ is unitarily equivalent to multiplication by
the independent variable, $M _\Phi$ in a model subspace $K^2 _\Phi \subset H^2 _n (\bm{U})$, where $\Phi = \Phi _B$ is the
Lifschitz characteristic function of $B$. $\Phi \in H^\infty _{\bm{M} _n } (\bm{U} )$ is inner, and the fact that $B$
is regular implies that $\Phi $ has an analytic extension to some open neighbourhood of each $x \in \bm{R}$. The transformation
$B$ is densely defined if and only if  $\Theta :=
\Phi \circ \mu ^{-1}$ is such that the angular derivative of $\Theta \vec{x}$ at $z=1$ does not exist for any $\vec{x} \in \bm{C} ^n$.
In this case $K^2 _\Phi$ has a $\mc{U} (n)$ -parameter family of total orthogonal sets of point evaluation vectors $\{ \delta _{\la _i (U) } ^{\vec{x} _i (U)} \}$ which are eigenvectors to self-adjoint extensions $M _\Phi (U)$ of $ M _\Phi$ with eigenvalues $\la _i (U)$.
The spectra $\sigma (M _\Phi (U) ) = \{ \la _i (U) \}$ are purely discrete with no finite accumulation points. Moreover
each $\delta _\la  ^{\vec{x}}$ is an eigenvector of $M _\Phi ^*$ to eigenvalue $\la$.

\subsubsection{Remark} The above representation results for densely defined regular simple symmetric linear operators with deficiency indices
$(n,n)$ apply in particular to regular symmetric differential operators of any finite order, and to their self-adjoint extensions.

\subsection{A model for c.n.u. contractions with defect indices $(n,n)$}
\label{subsection:model}

    In this subsection we show that if $V$ is any partial isometry with finite and equal defect indices $(n,n)$, then
$V$ is unitarily equivalent to the partial isometry $Z _{\Theta _V} (0)$ acting in $L^2 _{\Theta _V} (\bm{T})$. If
$V' := V | _{\ker{V} ^{\perp}}$, an isometric linear transformation with deficiency indices $(n,n)$, then as shown in
\cite{Lifschitz2} (and reproduced in Theorem \ref{thm:represent} above),
$V'$ is unitarily equivalent to $Z _{\Theta _V} ' := Z _{\Theta _V} (0) | _{\ker{Z _\Theta (0)} ^\perp}$.
This establishes that the Lifschitz characteristic function of any isometric linear transformation $V'$ with indices $(n,n)$
is equal to the Nagy-Foias characteristic function of the partial isometric extension $V$ of $V'$ to the entire Hilbert space.
While natural, and known in the case where $\Theta _V$ is inner \cite{Foias}, for non-inner $\Theta _V$, this is not immediately obvious from the definitions of these two different characteristic functions.

To prove this, recall that $\Theta$, a contractive $\bm{M} _n$-valued analytic function on $\bm{D}$ is the characteristic
function of a partial isometry $V$ if and only if $\Theta (0) =0$. So to prove $V$ is isomorphic to $Z_{\Theta _V} (0)$ it suffices
to show that the characteristic function of $Z _{\Theta _V } (0)$ coincides with $\Theta _V$.

Let $T:= Z _{\Theta _V } (0)$, $Z:= Z_{\Theta _V}$ and let $P _+, P_-$ be the projectors onto
$\mf{D} _T$ and $\mf{D} _{T^*}$ respectively. Then the Nagy-Foias characteristic function of $T$ is \ba  \Theta _T (z ) & = & z P_- ( \bm{1} - zT^* ) ^{-1} P_+
= P_- \sum _{m=0} ^\infty z ^{m+1} (T^*) ^m P_+ \nonumber \\ & = & \sum _{m=0} ^\infty z^{m+1} P_- (Z ^{-1} - Z ^{-1} P_- ) ^m Z ^{-1} P_- Z .\ea

We will show that this coincides with $\Theta (z)$ by showing that if $\Theta (z) = \sum _{k=1} ^{\infty} c_k z^k$ that
$c_k = P_- (Z ^{-1} - Z ^{-1} P_- ) ^{k-1} Z ^{-1} P_- =: d_k$. As in Section \ref{subsection:ACmeas}, let $l_k := P_- Z^{-k} P_-$,
and let $P = P_-$. Then,

\ba d_k & = & P (Z ^{-1} - Z ^{-1} P ) ^{k-2} (Z^{-1} - Z^{-1} P)  Z^{-1} P   \nonumber \\
& = &  P (Z ^{-1} - Z ^{-1} P ) ^{k-2} Z^{-2} P - P  (Z ^{-1} - Z ^{-1} P ) ^{k-2} Z^{-1} P Z^{-1} P \nonumber \\
& & P (Z^{-1} - Z^{-1} P) ^{k-3} (Z^{-1} - Z^{-1} P) Z^{-2} P - d_{k-1} l_1 \nonumber \\
& = & P (Z^{-1} - Z^{-1} P) ^{k-3} Z^{-3} P - d_{k-2} l_2 - d_{k-1} l_1 \nonumber \\
& = & P (Z^{-1} - Z^{-1} P) Z^{-(k-1)} P -d_1 l_{k-1} - ... - d_{k-1} l_1 \nonumber \\
& = & l_k - d_1 l_{k-1} - ... - d_{k-1} l_1 .\ea

In the last line above the fact that $l_1 =d_1$ was used. It follows that $d_k = l _k  - \sum _{j=1} ^{k-1} d_j l_{k-j}$.
Since $d_1 = l_1 = c_1$, it follows from Lemma \ref{lemming:idone} that $d_k = c_k$. We have established the following:

\begin{prop}
    If $\Theta (0) = 0$ then the characteristic function of the partial isometry $Z_\Theta (0)$ coincides with $\Theta$.
Hence if $V$ is any partial isometry with finite defect indices $(n,n)$, $V$ is unitarily equivalent to $Z_{\Theta _V} (0)$.
\label{prop:charfunrel}
\end{prop}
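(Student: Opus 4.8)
The plan is to reduce the unitary-equivalence assertion to a coincidence of two characteristic functions, and then to establish that coincidence by comparing Taylor coefficients. Recall that a contractive $\bm{M}_n$-valued analytic function on $\bm{D}$ is the Nagy--Foias characteristic function of a partial isometry exactly when it vanishes at the origin, and that two completely non-unitary contractions are unitarily equivalent precisely when their characteristic functions coincide. Since $\Theta(0) = 0$, it therefore suffices to prove the first assertion, namely that the Nagy--Foias characteristic function of the partial isometry $Z_\Theta(0)$ coincides with $\Theta$ itself; the second assertion then follows by applying this with $\Theta = \Theta_V$ the characteristic function of a given partial isometry $V$, whose vanishing at $0$ is guaranteed because $V$ is a partial isometry.

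First I would set $Z := Z_\Theta$ and $T := Z_\Theta(0)$, observe from the defining formula that $T = (\bm{1} - P_-)Z$ so that $T^* = Z^{-1} - Z^{-1}P_-$, and expand the Nagy--Foias characteristic function as a Neumann series
\be \Theta_T(z) = z P_- (\bm{1} - zT^*)^{-1} P_+ = \sum_{m=0}^\infty z^{m+1} P_- (Z^{-1} - Z^{-1}P_-)^m Z^{-1} P_- Z. \ee
Reading off the coefficient of $z^k$ and using the basis identification $b_i^+ = Z^{-1} b_i^-$ to absorb the trailing factor of $Z$, the target becomes the coefficient identity $c_k = d_k$, where $c_k$ is the $k^{\mr{th}}$ Taylor coefficient of $\Theta$ and $d_k := P_-(Z^{-1} - Z^{-1}P_-)^{k-1} Z^{-1} P_-$. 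With the abbreviation $l_k := P_- Z^{-k} P_-$ for the moments from Section \ref{subsection:ACmeas}, I would peel one factor of $(Z^{-1} - Z^{-1}P_-)$ at a time off the outside of $d_k$; each peeling produces a ``clean'' term in which the power of $Z^{-1}$ increases by one, together with a term in which an interior projector $P_-$ splits the product as $d_j\, l_{k-j}$. Iterating this telescope collapses the clean term down to $l_k$ and should yield the recurrence
\be d_k = l_k - \sum_{j=1}^{k-1} d_j\, l_{k-j}. \ee

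Finally I would match this against the Taylor coefficients of $\Theta$. Specializing Lemma \ref{lemming:idone} to $A = \bm{1}$ gives $l_k = c_k + \sum_{j=1}^{k-1} c_j\, l_{k-j}$, i.e. $c_k = l_k - \sum_{j=1}^{k-1} c_j\, l_{k-j}$, which is exactly the recurrence just obtained for $d_k$ with the same ordering of factors. Since $d_1 = l_1 = c_1$, a routine induction on $k$ forces $d_k = c_k$ for all $k$, so the characteristic function of $Z_\Theta(0)$ coincides with $\Theta$ and the proposition follows. I expect the main obstacle to be the middle step: carrying out the noncommutative expansion of $(Z^{-1} - Z^{-1}P_-)^{k-1}$ carefully enough that every interior occurrence of $P_-$ is resolved into a factor $l_{k-j}$ multiplying a lower-order $d_j$, without mis-ordering the operator products or mishandling the boundary terms of the telescope. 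Once the recurrence is pinned down, matching it to Lemma \ref{lemming:idone} is immediate.
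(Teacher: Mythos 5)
Your proposal is correct and follows essentially the same route as the paper: the same reduction via the Nagy--Foias facts (a contractive function vanishing at $0$ is the characteristic function of a partial isometry, and unitary equivalence corresponds to coincidence), the same Neumann-series expansion of $zP_-(\bm{1}-zT^*)^{-1}P_+$ with $T^* = Z^{-1}-Z^{-1}P_-$, the same peeling/telescoping derivation of the recurrence $d_k = l_k - \sum_{j=1}^{k-1} d_j\, l_{k-j}$, and the same matching against Lemma \ref{lemming:idone} with $A=\bm{1}$ plus induction from $d_1 = l_1 = c_1$. The recurrence you anticipate is exactly the one the paper obtains, with the correct ordering of the noncommuting factors, so there is no gap.
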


\subsubsection{Remark} It follows that any completely non-unitary contraction $T$ with defect indices $(n,n)$ is unitarily equivalent to
some extension of the partial isometry $Z _{\Theta _{T(0)} } (0)$. More precisely, recall from Section \ref{subsection:Weylint} that
if $T$ is a contraction with defect indices $(n,n)$ on a Hilbert space $\mc{H}$, that $T_0 = T - TP_+$ where $P_+$ projects
onto $\mf{D} _T = \ran{D_T}$, $D_T = \sqrt{ \bm{1} - T^* T}$ is a partial isometry with defect indices $(n,n)$. Let $\Theta$
be a $\bm{M} _n$-valued matrix analytic function on $\bm{D}$ which coincides with the Nagy-Foias characteristic function
of $\Theta _{T_0}$, and let $U$ be the unitary transformation such that $U^* T_0 U = Z_{\Theta} (0)$. Then
$U^* T U = Z _\Theta (A)$ where $A \in \bm{M} _n$ has components given by $A _{ij} := \left( U^* T U 1/z e_i , 1/z e_j \right) _\Theta$.
This provides a model for any completely non-unitary contraction with defect indices $(n,n)$.

The following calculation helps to relate the Nagy-Foias characteristic function of $Z_\Theta (A)$ to that of
$Z_\Theta (0)$. The characteristic function $\Theta _T $ of $T := Z_\Theta (A)$ is, by definition,
\be \Theta _T (z) = \left( - T + z D_{T^*} (1 -z T^* )^{-1} D_T \right) | _{\mf{D} _T} .\ee
Now $D_{T^*} = D_{T^*} P_-$ and $D_T = P_+ D_T = Z^{-1} P_- Z $. So let $\Gamma (z) := z P_- (1 - Z_\Theta (A) ^* ) ^{-1}   Z^{-1} P_-$

\begin{prop}
    The matrix function $\Gamma (z) = \Theta (z) (\bm{1} - A^* \Theta (z) ) ^{-1}$.
    \label{prop:charfunrel2}
\end{prop}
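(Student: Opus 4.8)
The plan is to compute $\Gamma$ coefficientwise by expanding the resolvent as a power series and then recognising the resulting generating function. Writing $Z := Z_\Theta$ and $T := Z_\Theta (A)$, the defining relation $Z_\Theta (A) = Z_\Theta + P_-(A-\bm{1})P_- Z_\Theta$ together with the unitarity of $Z$ gives $T^* = Z^{-1} + Z^{-1}P_-(A^*-\bm{1})P_-$, exactly as used in the proof of Proposition \ref{prop:acmeasure}. Expanding the resolvent in $\Gamma(z) = z P_-(\bm{1}-zT^*)^{-1}Z^{-1}P_-$ as $(\bm{1}-zT^*)^{-1} = \sum_{m\geq 0}z^m (T^*)^m$, I would write $\Gamma(z) = \sum_{m\geq 0}z^{m+1}g_m$ with $g_m := P_-(T^*)^m Z^{-1}P_-$, so that the task reduces to identifying the matrix coefficients $g_m$ (each a map $\mf{D}_-\to\mf{D}_-$, i.e. an element of $\bm{M}_n$ after the identification of $\mf{D}_-$ with $\bm{C}^n$). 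Throughout I would use $l_p := P_- Z^{-p}P_-$ and the fact that $\sum_{p\geq 1}l_p z^p$ is the power series of $(\bm{1}-\Theta(z))^{-1}-\bm{1}$, which is immediate from the definition of the $l_p$ in Section \ref{subsection:ACmeas} together with $\Theta(0)=0$; equivalently $L(z) := \sum_{p\geq 0}l_p z^p = (\bm{1}-\Theta(z))^{-1}$, with $l_0 = \bm{1}$.

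The core step is a recurrence for the $g_m$. To obtain it I would introduce the auxiliary coefficients $f_p^{(m)} := P_- Z^{-p}(T^*)^m Z^{-1}P_-$, so that $g_m = f_0^{(m)}$, and peel a single factor of $T^* = Z^{-1} + Z^{-1}P_-(A^*-\bm{1})P_-$ off the left of $(T^*)^m$. Because $P_- Z^{-(p+1)}P_- = l_{p+1}$ and $A^*-\bm{1}$ already acts within $\mf{D}_-$, the cross term factors cleanly and yields $f_p^{(m)} = f_{p+1}^{(m-1)} + l_{p+1}(A^*-\bm{1})g_{m-1}$. Iterating this from $p=0$ down to $p=m$, and using $f_m^{(0)} = P_- Z^{-(m+1)}P_- = l_{m+1}$, produces $g_m = l_{m+1} + \sum_{k=1}^{m}l_k(A^*-\bm{1})g_{m-k}$, with $g_0 = l_1$. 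I expect this peeling argument to be the main obstacle: one must track carefully which $Z^{-1}$ is absorbed into which $l_p$ and verify that the middle matrix $A^*-\bm{1}$ commutes out of the compression, in the same spirit as the recurrences of Lemmas \ref{lemming:idone} and \ref{lemming:formula}.

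With the recurrence in hand, the conclusion follows by summing against $z^{m+1}$. Writing $N := A^*-\bm{1}$ and regrouping the double sum as a Cauchy product, the recurrence becomes $\Gamma = (L-\bm{1}) + (L-\bm{1})N\,\Gamma$, that is $\left[\bm{1}-(L-\bm{1})N\right]\Gamma = L-\bm{1}$. Substituting $L = (\bm{1}-\Theta)^{-1}$ and $L-\bm{1} = (\bm{1}-\Theta)^{-1}\Theta$, the bracketed factor collapses: $\bm{1}-(\bm{1}-\Theta)^{-1}\Theta(A^*-\bm{1}) = (\bm{1}-\Theta)^{-1}\left[(\bm{1}-\Theta)-\Theta(A^*-\bm{1})\right] = (\bm{1}-\Theta)^{-1}(\bm{1}-\Theta A^*)$. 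Hence $(\bm{1}-\Theta)^{-1}(\bm{1}-\Theta A^*)\Gamma = (\bm{1}-\Theta)^{-1}\Theta$, and cancelling $(\bm{1}-\Theta)^{-1}$ gives $(\bm{1}-\Theta A^*)\Gamma = \Theta$, so $\Gamma(z) = (\bm{1}-\Theta(z)A^*)^{-1}\Theta(z)$. Finally, the identity $(\bm{1}-\Theta A^*)\Theta = \Theta(\bm{1}-A^*\Theta)$ yields $(\bm{1}-\Theta A^*)^{-1}\Theta = \Theta(\bm{1}-A^*\Theta)^{-1}$, which is the asserted formula.
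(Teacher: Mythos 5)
Your proof is correct, and it takes a genuinely more streamlined route than the paper's own (sketched) argument. Writing $T := Z_\Theta (A)$ as you do, the paper works with \emph{two} coefficient recurrences: one for the coefficients $d_j$ of $\Gamma$ in terms of the Taylor coefficients $c_k$ of $\Theta$, namely $d_j = c_j + \sum _{k=1} ^{j-1} c_k A^* d_{j-k}$, obtained from Elliott's moment formula (Proposition \ref{prop:Elliott}) as in the proof of Proposition \ref{prop:acmeasure}; and one for the coefficients $b_j$ of the target function $\Theta (\bm{1} - A^* \Theta ) ^{-1}$ in terms of the compressions $l_j = P_- Z^{-j} P_-$, namely $b_j = l_j + \sum _{k=1} ^{j-1} l_k (A^* - \bm{1} ) b_{j-k}$, derived by the method of Lemma \ref{lemming:idone}; the equality $d_j = b_j$ is then forced by the relation between the $c_k$ and $l_k$ together with the combinatorial rearrangement of Lemma \ref{lemming:combo}. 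You instead establish a \emph{single} recurrence, the $l$-based one, directly for the coefficients of $\Gamma$ by operator peeling: your bookkeeping with $f_p ^{(m)} := P_- Z^{-p} (T^*) ^m Z^{-1} P_-$ is the same manipulation as the displayed computation in the proof of Proposition \ref{prop:acmeasure}, adapted so that the extra right-hand factor $Z^{-1}$ turns the boundary term $l_k A^*$ appearing there into $l_{m+1}$ here; each step is valid since $T^* = Z^{-1} + Z^{-1} P_- (A^* - \bm{1} ) P_-$ and $P_- Z^{-(p+1)} P_- = l_{p+1}$, with $A^* - \bm{1}$ acting within $\mf{D} _-$. Then, rather than matching against a second recurrence, you sum into the functional equation $\Gamma = (L - \bm{1}) + (L - \bm{1})(A^* - \bm{1}) \Gamma$ with $L = (\bm{1} - \Theta ) ^{-1}$ (an identification justified by the paper's definition of $l_j = l_j (\bm{1})$ together with $\Theta (0) = 0$) and solve in closed form, finishing with the standard identity $(\bm{1} - \Theta A^* ) ^{-1} \Theta = \Theta (\bm{1} - A^* \Theta ) ^{-1}$. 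What your route buys: Elliott's formula and Lemma \ref{lemming:combo} are never needed, and the closed form for $\Gamma$ emerges from the computation instead of having to be known in advance and verified. What the paper's route buys: it recycles recurrences essentially already established earlier in the paper, so the proof sketch requires almost no new operator computation.
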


\begin{proof}{ (Sketch)}
    This proof is very similar to previous calculations in Section \ref{subsection:ACmeas}.
As before let $\Theta (z) := \sum _{k=1} ^\infty c_k z^k$,
and let $\Gamma (z) = \sum _{k=1} ^\infty d_k z^k$.
\be \Gamma (z) = \sum _{m=0} ^\infty z^{m+1} P_- (Z_\Theta (A) ^*) ^m Z^{-1} P_-
= \sum _{m=0} ^\infty z^{m+1} P_- (Z ^{-1} + P_- (A^*-1) P_- Z^{-1}) ^m Z^{-1} P_- .\ee

Let $b_k$, $k \in \bm{N}$ be the coefficients of $\Theta (z) (\bm{1} - A^* \Theta (z) ) ^{-1}$ and $d_k$ be the coefficients
of $\Gamma (z)$. Now using the same methods as in Lemma \ref{lemming:idone} it is easy to calculate that
\be b_m = l_m + \sum _{j=1} ^{m-1} l_j (A^* -1) b_{m-j} .\ee

By the definition of the $d_j$ and Elliott's formula, Proposition \ref{prop:Elliott}, one can show, as in the proof of Proposition \ref{prop:acmeasure} that
\be d_j = c_j + \sum _{k=1} ^{j-1} c_k A^* d_{j-k}.\ee Finally, using these two formulas and the one relating the $l_k$
and $c_k$, one can use a combinatorial identity, as in the proof of Proposition \ref{prop:acmeasure} to show
that $d_j =b_j$.
\end{proof}

\end{document}